\newtheorem{theo}{Theorem}[section]
\newtheorem{lemm}[theo]{Lemma}
\newtheorem{prop}[theo]{Proposition}
\newtheorem{remark}[theo]{Remark}
\def\proof {{\noindent \bf{Proof:\hspace{4pt}}}}
\def\endproof{\hfill$\square$\vspace{6pt}}
\numberwithin{equation}{section}
\title{
{\bf\Large  Existence theory for the Boussinesq equation in Modulation spaces}}
\author{{Carlos Banquet}{\thanks{
Corresponding author.}} \\
{\small Departamento de Matem\'{a}ticas y Estad\'{\i}stica, Universidad de C%
\'{o}rdoba}\\
{\small A.A. 354, Monter\'{\i}a, Colombia.}\\
{\small \texttt{E-mail:cbanquet@correo.unicordoba.edu.co}}\vspace{.5cm}\\
{{\'Elder J. Villamizar-Roa}}\\
{\small Universidad Industrial de Santander, Escuela de Matem\'{a}ticas}\\
{\small A.A. 678, Bucaramanga, Colombia.} \\
{\small \texttt{E-mail:jvillami@uis.edu.co}}}
\date{}
\begin{document}
\maketitle
\begin{abstract}
In this paper we study the Cauchy problem for the generalized Boussinesq equation with initial data in modulation spaces $M^{s}_{p^\prime,q}(\mathbb{R}^n),$ $n\geq 1.$ After a decomposition of the Boussinesq equation in a $2\times 2$-nonlinear system, we obtain the existence of global and local solutions in several classes of functions with values in $ M^s_{p,q}\times D^{-1}JM^s_{p,q}$ spaces for suitable $p,q$ and $s,$ including the special case $p=2,q=1$ and $s=0.$ Finally, we prove some results of scattering and asymptotic stability in the framework of modulation spaces.\\

{\bf Keywords.} Boussinesq equation, modulation spaces, local and global solutions, scattering, asymptotic stability.\\

{\bf AMS subject classifications.} 35Q53; 35A01; 47J35; 35B40; 35B35
\end{abstract}

\section{Introduction}
We consider the initial value problem associated to the generalized Boussinesq equation
\begin{equation}\label{GoodBous}
\left\{
\begin{array}{lc}
\partial^2_{t}u-\Delta u+\Delta^2u+\Delta f(u)=0, & (x,t)\in \mathbb{R}^{n+1}, \\
u(x,0)=u_{0}(x), \ \ \ \partial_t u(x,0)=\phi(x)=\Delta v_0(x),&  x\in \mathbb{R}^{n},
\end{array}
\right.
\end{equation}
where $u:\mathbb{R}^n\times \mathbb{R}\rightarrow \mathbb{R}$ is the unknown, $u_0,v_0:\mathbb{R}^n\rightarrow \mathbb{R}$ are given functions denoting the initial data and the nonlinear term is $f(u)=u^\lambda,$ for some $1<\lambda<\infty.$ Equation (\ref{GoodBous}) is physically relevant in the modelling of shallow water waves, ion-sound waves in plasma, the dynamics of stretched string, and another physical phenomena (see Cho and Ozawa \cite{Cho} , Peregrine \cite{Peregrine}). The IVP (\ref{GoodBous}) is formally equivalent to the following system

\begin{equation}\label{EquiSyst}
\left\{
\begin{array}{lc}
\partial_{t}u=\Delta v, & (x,t)\in \mathbb{R}^{n+1}, \\
\partial_{t}v=u-\Delta u-f(u), & (x,t)\in \mathbb{R}^{n+1}, \\
u(x,0)=u_{0}(x), \ \ \ v(x,0)=v_0(x),&  x\in \mathbb{R}^{n}.
\end{array}
\right.
\end{equation}
From Duhamel's principle the Cauchy problem associated to system (\ref{EquiSyst}) is equivalente to the integral equation
\begin{equation}\label{ms5}
[u(t),v(t)]=B(t)[u_0,v_0]-\int_0^tB(t-\tau)[0,f(u(\tau))]d\tau,
\end{equation}
 where $B$ is the solution of the linear problem associated to (\ref{EquiSyst}). More exactly, for initial data $[u_0,v_0]$ and $t\in \mathbb{R},$ we have
\begin{equation}\label{g1}
B(t)[u_0,v_0]=\int_{\mathbb{R}^n}e^{ix\cdot \xi}\left [\begin{array}{cc}
\cos(t|\xi|\langle\xi\rangle) & -|\xi|\langle\xi\rangle^{-1}\sin(t|\xi|\langle\xi\rangle)\\
|\xi|^{-1}\langle\xi\rangle\sin(t|\xi|\langle\xi\rangle) & \cos(t|\xi|\langle\xi\rangle ) \end{array} \right]d\xi,
\end{equation}
where  $\langle\xi\rangle=(1+|\xi|^2)^{1/2}.$ Note that
\[
B(t)[u_0,v_0]=[B_1(t)u_0+B_2(t)v_0, B_3(t)u_0+B_1(t)v_0],
\]
where $B_1(t), B_2(t)$ and $B_3(t)$ are the multiplier operators with symbols $\cos(t|\xi|\langle\xi\rangle)$, $-|\xi|\langle\xi\rangle^{-1}\sin(t|\xi|\langle\xi\rangle)$ and 
$|\xi|^{-1}\langle\xi\rangle\sin(t|\xi|\langle\xi\rangle),$ respectively.\\

 Several authors have analyzed the local and global existence and long time asymptotic behavior of solutions for (\ref{GoodBous}) (cf. \cite{Bona,Cho,Farah,Farah2,Farah3,Lucas1,Linares,Liu,Tsutsumi} and references therein). In particular, in Bona {\it et al} \cite{Bona}, considering the 1D case, the authors decomposed (\ref{GoodBous}) in the following system 
\begin{equation}\label{EquiSystk}
\left\{
\begin{array}{lc}
\partial_{t}u=\partial_x v, \\
\partial_{t}v=\partial_x(u-\partial_{xx}u-f(u))_x, \\
u(x,0)=u_{0}(x), \ \ \ v(x,0)=v_0(x),
\end{array}
\right.
\end{equation}
and analyzed the local well-posedness with initial data $u_0\in H^{s+2}(\mathbb{R}),$ $\phi=(v_0)_x,$ $v_0\in H^{s+1}(\mathbb{R}),$ $s>0,$ and $f$ smooth. Existence results for $f(u)=\vert u\vert^{\lambda-1}u$ and initial data $u_0\in H^{1}(\mathbb{R}),$ $\phi=(v_0)_{xx},$ $v_0\in H^{1}(\mathbb{R}),$ were obtained by Tsutsumi and Matahashi in \cite{Tsutsumi}. For the same nonlinearity $f(u),$ Linares in \cite{Linares} proved the local well-posedness with either $[u_0,\phi]=[u_0,(v_0)_x]\in L^2(\mathbb{R})\times \dot{H}^{-1}(\mathbb{R})$ and $1<\lambda<5,$ or $[u_0,\phi]=[u_0,(v_0)_x]\in H^1(\mathbb{R})\times L^{2}(\mathbb{R})$ and $1<\lambda<\infty.$ The results of \cite{Linares} were extended by Farah \cite{Farah} for the $n$-dimensional case. Some local well-posedness results with initial data $[u_0,\phi]=[u_0,(v_0)_x]\in H^s(\mathbb{R})\times {H}^{s-1}(\mathbb{R})$ and $f(u)=u^2$ were obtained by Farah in \cite{Farah2}. Results of asymptotic behavior have been addressed by Cho and Ozawa \cite{Cho}, Farah \cite{Farah3} and Liu \cite{Liu}, in the framework of $H^s_p$ and $B^s_{p,2}$-spaces for $s>0$ and $1<p<2$. More exactly, in \cite{Liu} the author analyzed the existence of solutions in one-dimensional case for $f(u)=\vert u\vert^{\lambda-1}u,$ $\lambda>5,$ and small initial data $[u_0,v_0]\in (H^1(\mathbb{R})\times L^2(\mathbb{R}))\cap X^1_0,$ where $X^r_s:=L^r_s\times L^r_{s-1},$ being $L^r_s$ the Bessel potential space with potential $J^s=(1-\partial_{xx})^{s/2}.$ In the same paper, a scattering result for small perturbations was obtained. The results of \cite{Liu} were extended in \cite{Cho} to the case $n\geq 1$ in the framework of Besov $B^s_{p,2}$-spaces. The initial data considered in \cite{Cho} belongs to a subset of $B^{s+n\delta}_{\frac{\lambda+1}{\lambda},2}\times \Omega(B^{s+n\delta}_{\frac{\lambda+1}{\lambda},2}),$ $\delta=1-\frac{2}{\lambda+1}$ and $\widehat{\Omega(\psi)}=\vert\xi\vert(\langle \xi\rangle)\hat{\psi}.$ Using the formulation (\ref{EquiSyst}), in \cite{Farah3} was analyzed the reciprocal problem of the scattering theory by constructing a solution with a given scattering state. Existence of local and global solutions for initial data in weak-$L^p$ spaces (also denoted by $ L^{(p,\infty)}$) were obtained by Ferreira in \cite{Lucas1}. Long time behavior and scattering theory results, in the $L^{(p,\infty)}$ framework, were also obtained in \cite{Lucas1}. Previous initial data classes satisfy the following embedding relations
$$B^s_{p,1}\subset H^s_p\subset B^{s}_{p,\infty}\varsubsetneq L^{(q,\infty)},\ \mbox{for}\ s\geq 0,\ \mbox{and}\ \frac{1}{q}=\frac{1}{p}-\frac{s}{n}.$$

Motivated by the previous references, in this paper we study the local and global existence of solutions of (\ref{EquiSyst}) with initial data  in modulation spaces $M_{p,q}^s(\mathbb{R}^n)$. Modulation spaces were introduced by Feichtinger in \cite{Fei}, prompted by the idea of measuring the smoothness classes of functions or distributions.  Since their introduction, modulation spaces have become canonical for both time-frequency and phase-space analysis, see Chaichenets {\it et al} \cite{Chai}. Wang and Hudzik \cite{BaoxHudz} gave an equivalent definition of modulation spaces by using the frequency-uniform-decomposition operators. In the same work, the existence of global solutions for nonlinear Schr\"odinger and Klein-Gordon equations in modulation spaces were analyzed. After then, several studies on nonlinear PDEs in the framework of modulation spaces have been addressed (cf. \cite{Chai,Huang,Iwabuchi,Ruz,Wang} and references therein). In this context, the contribution of this paper is to develop the existence and long time asymptotic behavior of the Boussinesq type equations with initial data in modulation spaces. To get this aim, first we derive some linear estimates for the one parameter group $B$ introduced in (\ref{g1}) (cf. Section \ref{section2}).\\

Before stating our main results, we recall some preliminar definitions and notations related to the modulation spaces $M_{p,q}^s(\mathbb{R}^n)$ (see for instance Wang and Hudzik \cite{BaoxHudz}). Let $Q_k$ be the unit cube with center at $k.$ Let $\rho:\mathbb{R}^n\rightarrow [0,1]$ be a smooth function satisfying $\rho(\xi)=1$ for $\vert \xi\vert_\infty\leq \frac{1}{2}$ and $\rho(\xi)=0$ for $\vert \xi\vert\geq 1.$ Let $\rho_k(\xi)=\rho(\xi-k),$ $k\in\mathbb{Z}^n,$ a translation of $\rho.$ It holds that $\rho_k(\xi)=1$ in $Q_k,$ and thus, $\sum_{k\in\mathbb{Z}^n}\rho_k(\xi)\geq 1$ for all $\xi\in \mathbb{R}^n.$ Let
$$\sigma_k(\xi)=\rho_k(\xi)\left( \sum_{l\in\mathbb{Z}^n}\rho_l(\xi)\right)^{-1},\ k\in\mathbb{Z}^n.$$
Then, the sequence $\{\sigma_k(\xi)\}_{k\in\mathbb{Z}^n}$ verifies the following properties:
\begin{eqnarray*}
&\vert \sigma_k(\xi)\vert\geq C,\ \forall\xi\in Q_k,&\\
&\mbox{supp}(\sigma_k)\subset\{\xi: \vert \xi-k\vert_\infty\leq 1\},&\\
&\sum_{k\in\mathbb{Z}^n}\sigma_k(\xi)=1,\ \forall\xi\in\mathbb{R}^n,&\\
&\vert D^\theta\sigma_k(\xi)\vert\leq C_{\vert \theta\vert},\ \forall\xi\in \mathbb{R}^n,\ \theta\in (\mathbb{Z}^+\cup\{ 0\})^n.&
\end{eqnarray*}
The sequence $\{\sigma_k(\xi)\}_{k\in\mathbb{Z}^n}$ gives a smooth decomposition of $\sigma_k(\xi)=\sigma(\xi-k)$ and 
$$\sigma(\xi)=\rho(\xi)\left( \sum_{l\in\mathbb{Z}^n}\rho_l(\xi)\right)^{-1}.$$
We consider the frequency-uniform decomposition operators $\square_k:=\mathcal{F}^{-1}\sigma_k\mathcal{F},$ $k\in\mathbb{Z}^n.$ Then, for $s\in\mathbb{R},$ $1\leq p,q\leq\infty,$ the modulations spaces $M_{p,q}^s$ are defined as (cf. Wang and Hudzik \cite{BaoxHudz}): 
\begin{eqnarray*}
M_{p,q}^s:=\left\{ f\in \mathcal{S}^{\prime}(\mathbb{R}^n):\ \Vert f\Vert_{M^s_{p,q}}<\infty\right\},
\end{eqnarray*}
where
\begin{equation*}
\Vert f\Vert_{M^s_{p,q}}=\left\{
\begin{array}{lc}
 \left( \sum_{k\in\mathbb{Z}^n}(1+\vert k\vert)^{sq}\Vert \square_kf\Vert_p^q\right)^{1/q},\ \ \mbox{for}\ 1\leq q<\infty,\\
 \\
\sup_{k\in\mathbb{Z}^n}(1+\vert k\vert)^{s}\Vert \square_kf\Vert_p, \ \ \mbox{for}\ q=\infty.
\end{array}
\right.
\end{equation*}
For simplicity, we will write $M^0_{p,q}(\mathbb{R})=M_{p,q}(\mathbb{R}).$ Many of their properties, including embeddings
in other known function spaces, can be found in \cite{BaoxHudz}. In particular, the following embeddings hold:
\begin{itemize}
\item[i)] $M^{s_1}_{p_1,q_1}\subset M^{s_2}_{p_2,q_2},\ \mbox{if}\ s_1\geq s_2,\ 0<p_1\leq p_2,\ 0<q_1\leq q_2,$
\item [ii)] $M^{s_1}_{p,q_1}\subset M^{s_2}_{p,q_2},\ \mbox{if}\ q_1>q_2,\ s_1>s_2,\ s_1-s_2>n/q_2-n/q_1,$
\item [iii)] For $1<p\leq \infty,$ it holds that $M_{p,1}\subset L^\infty\cap L^p,$ 
\item [iv)] For $0<p,q\leq \infty$ and $s\in\mathbb{R},$ it holds that $B^{s+n/q}_{p,q}\subset M^s_{p,q},$
\item [v)] For $0<p,q\leq \infty,$ $s,\sigma\in \mathbb{R},$ the operator $(I-\Delta)^{\sigma/2}:M^{s}_{p,q}\rightarrow M^{s-\sigma}_{p,q}$ is an isomorphic mapping.
\end{itemize}

 Let $D^s=(-\Delta)^{s/2}$ and $J^{s}=(I-\Delta)^{s/2},$ for any $s\in \mathbb{R}.$ Given the Banach space $X=L^\gamma(\mathbb{R};L^p(\mathbb{R}^n))$ or $X=L^\gamma(\mathbb{R};D^{-1}JL^p(\mathbb{R}^n)),$ $1\leq p,\gamma\leq \infty,$ we also consider the function spaces $l_{\square}^q(X),$ $1\leq q<\infty,$ 
introduced in \cite{BaoxHudz}, which are defined as follows:
\begin{align*}
l_{\square}^q(X)=\left\{ f\in \mathcal{S}^{\prime}(\mathbb{R}^{n+1}): \Vert u\Vert_{l_{\square}^q(X)}:= \left( \sum_{k\in\mathbb{Z}^n}\Vert \square_kf\Vert_X^q\right)^{1/q}<\infty\right\}. 
\end{align*}
We consider the following time-dependent spaces in which we will establish our existence results. We denote by $\mathcal{L}^\infty_{\alpha,s} $ the distribution-valued pairs $[u,v]:(-\infty,\infty)\rightarrow M^s_{p,q}\times D^{-1}JM^s_{p,q}$ with norm given by
\begin{align}
& \Vert [u,v]\Vert_{\mathcal{L}^\infty_{\alpha,s}}:=\sup_{-\infty<t<\infty}(1+\vert t \vert)^{\alpha}(\Vert u(t)\Vert_{M_{p,q}^s}+\Vert v(t)\Vert_{D^{-1}JM_{p,q}^s}),
\end{align}
where $\alpha=n\left(\frac 12-\frac 1p\right),$  with $2\leq p,$ and $\Vert v(t)\Vert_{D^{-1}JM_{p,q}^s}:=\Vert J^{-1}Dv(t)\Vert_{M_{p,q}^s}.$ Since $J^{\sigma/2}:M^{s}_{p,q}\rightarrow M^{s-\sigma}_{p,q}$ is an isomorphic mapping, $[u,v]:(-\infty,\infty)\rightarrow M^s_{p,q}\times D^{-1}JM^s_{p,q}$ is equivalent to say that $[u,Dv]:(-\infty,\infty)\rightarrow M^s_{p,q}\times M^{s-1}_{p,q}$ and 
\begin{align}
& \Vert [u,v]\Vert_{\mathcal{L}^\infty_{\alpha,s}}=\sup_{-\infty<t<\infty}(1+\vert t \vert)^{\alpha}(\Vert u(t)\Vert_{M_{p,q}^s}+\Vert Dv(t)\Vert_{M_{p,q}^{s-1}}).
\end{align}

We also consider the space $\mathcal{L}^T_{\alpha,s}$ of the distribution-valued pairs $[u,v]:(-T,T)\rightarrow M^s_{p,q}\times D^{-1}JM^s_{p,q}$ with norm given by
\begin{align}
& \Vert [u,v]\Vert_{\mathcal{L}^T_{\alpha,s}}:=\sup_{-T<t<T}(1+\vert t \vert )^{\alpha}(\Vert u(t)\Vert_{M_{p,q}^s}+\Vert v(t)\Vert_{D^{-1}JM_{p,q}^s}).
\end{align}

Throughout this paper, $\lambda_0(n)$ corresponds to the positive root of the equation $n\lambda^2-(n+2)\lambda-2=0,$ that is, $\lambda_0(n)=\frac{n+2+\sqrt{n^2+12n+4}}{2n}.$ We also denote by $\lambda_1(n)=\frac{n+2}{n-2}$ for $n\geq 3$ and $\lambda_1(n)=\infty$ if $n=1,2.$ Now we are in position to establish the main results of this paper. 
\begin{theo}\label{global1} Let $\lambda$ a positive integer such that $\lambda>\lambda_0(n),$  $p=\lambda+1,$ $1\leq q < 2,$ and $n-\frac{n}{q} \leq s < \frac{n}{q}.$ If $q=1$ assume also $\lambda \geq 2.$ Suppose that $[u_0,v_0]\in M^{s}_{p',q}\times M^{s}_{p',q}.$ There exists $\epsilon>0$ small enough such that if $\tilde{C}(\Vert u_0\Vert_{M^{s}_{p',q}}+\Vert v_0\Vert_{M^{s}_{p',q}})\leq \epsilon,$ the IVP (\ref{EquiSyst}) has a unique global mild solution $[u,v]\in\mathcal{L}^\infty_{\alpha,s},$ satisfying $\Vert [u,v]\Vert_{\mathcal{L}^\infty_{\alpha,s}}\leq 2\epsilon.$ Moreover, the data-solution map $[u_0,v_0]\longmapsto [u,v]$ from $M^{s}_{p',q}\times M^{s}_{p',q}$ into $\mathcal{L}^\infty_{\alpha,s}$ is locally Lipschitz.
\end{theo}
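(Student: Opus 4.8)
The plan is to run a standard contraction-mapping (Banach fixed point) argument in the space $\mathcal{L}^\infty_{\alpha,s}$, using the integral formulation (\ref{ms5}). Define the map
\[
\Phi[u,v](t) := B(t)[u_0,v_0] - \int_0^t B(t-\tau)[0,f(u(\tau))]\,d\tau,
\]
and show that for $\epsilon$ small enough $\Phi$ is a contraction on the closed ball $\{[u,v]\in\mathcal{L}^\infty_{\alpha,s}:\|[u,v]\|_{\mathcal{L}^\infty_{\alpha,s}}\le 2\epsilon\}$. The two ingredients are: (i) the linear estimate for $B(t)$ acting on $M^s_{p',q}\times M^s_{p',q}$, which should have been established in Section~\ref{section2} and which gives the time-decay $(1+|t|)^{-\alpha}$ with $\alpha=n(\frac12-\frac1p)$ — this is where the restriction $2\le p$ and the relation $\alpha=n(\frac12-\frac1p)$ come from, and it is essentially a dispersive/Bernstein-type bound on each frequency block $\square_k$ followed by summation in $\ell^q$ against the weight $(1+|k|)^{s}$; and (ii) a nonlinear (product) estimate controlling $\|f(u)\|$ in the dual-type space that appears after applying the linear bound to $[0,f(u)]$, together with the Duhamel time integral.

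For the linear part, I would simply invoke the estimate from Section~\ref{section2}: $\|B(t)[u_0,v_0]\|_{\mathcal{L}^\infty_{\alpha,s}}\le \tilde C(\|u_0\|_{M^s_{p',q}}+\|v_0\|_{M^s_{p',q}})$. For the Duhamel term, the key is to estimate, for each fixed $t$,
\[
(1+|t|)^{\alpha}\Big\|\int_0^t B(t-\tau)[0,f(u(\tau))]\,d\tau\Big\|_{M^s_{p,q}\times D^{-1}JM^s_{p,q}}.
\]
Applying the pointwise-in-$\tau$ linear decay estimate gives an integrand bounded by $(1+|t-\tau|)^{-\alpha}\|f(u(\tau))\|_{Y}$ for the appropriate space $Y$ (here $Y$ should be $M^{s}_{p',q}$ or a shifted version thereof, taking into account the extra factor $|\xi|\langle\xi\rangle^{-1}$ in $B_2,B_3$). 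Then I need the nonlinear estimate: since $f(u)=u^\lambda$ with $\lambda$ a positive integer, I use the algebra/multilinear property of modulation spaces to bound $\|u^\lambda\|_{M^{s}_{p',q}}\lesssim \|u\|_{M^{s}_{\lambda p', \lambda q}}^{\lambda}$ or, more usefully after Hölder in the block index and Sobolev-type embeddings among modulation spaces (property (i)–(ii) in the list), by $\|u\|_{M^s_{p,q}}^{\lambda}$ up to a harmless loss absorbed by the hypotheses $\lambda>\lambda_0(n)$, $n-\frac nq\le s<\frac nq$ (and $\lambda\ge2$ when $q=1$). This reduces the Duhamel bound to
\[
(1+|t|)^{\alpha}\int_0^t (1+|t-\tau|)^{-\alpha}(1+|\tau|)^{-\lambda\alpha}\,d\tau\cdot \|[u,v]\|_{\mathcal{L}^\infty_{\alpha,s}}^{\lambda},
\]
and the condition $\lambda>\lambda_0(n)$ is exactly what makes $n\lambda^2-(n+2)\lambda-2>0$, i.e. $\lambda\alpha>1$ and $(\lambda-1)\alpha>\ldots$, so that the time integral $\int_0^t(1+|t-\tau|)^{-\alpha}(1+|\tau|)^{-\lambda\alpha}d\tau \lesssim (1+|t|)^{-\alpha}$ converges uniformly in $t$. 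This yields $\|\Phi[u,v]\|_{\mathcal{L}^\infty_{\alpha,s}}\le \tilde C(\|u_0\|+\|v_0\|) + C\|[u,v]\|^{\lambda}\le \epsilon + C(2\epsilon)^{\lambda}\le 2\epsilon$ for $\epsilon$ small.

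The contraction estimate is analogous: for two solutions $[u,v],[\bar u,\bar v]$ in the ball, write $u^\lambda-\bar u^\lambda = (u-\bar u)\sum_{j=0}^{\lambda-1}u^{j}\bar u^{\lambda-1-j}$, apply the same multilinear modulation-space estimate to get $\|f(u)-f(\bar u)\|_{Y}\lesssim \|u-\bar u\|_{M^s_{p,q}}(\|u\|^{\lambda-1}_{M^s_{p,q}}+\|\bar u\|^{\lambda-1}_{M^s_{p,q}})$, and run the identical time-weighted integral bound to obtain $\|\Phi[u,v]-\Phi[\bar u,\bar v]\|_{\mathcal{L}^\infty_{\alpha,s}}\le C(2\epsilon)^{\lambda-1}\|[u,v]-[\bar u,\bar v]\|_{\mathcal{L}^\infty_{\alpha,s}}\le \frac12\|[u,v]-[\bar u,\bar v]\|_{\mathcal{L}^\infty_{\alpha,s}}$. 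Banach's fixed point theorem then gives the unique solution in the ball, and the Lipschitz dependence on the data follows by subtracting the fixed-point identities for two different data pairs and absorbing the nonlinear difference as above. The main obstacle I anticipate is (a) getting the nonlinear estimate in exactly the right modulation space — one must track carefully how the multiplier $|\xi|\langle\xi\rangle^{-1}$ in $B_2$ interacts with the $D^{-1}J$ weight in the definition of the norm, and verify the product estimate $\|u^\lambda\|_{M^s_{p',q}}\lesssim\|u\|_{M^s_{p,q}}^\lambda$ holds under precisely the stated range $n-\frac nq\le s<\frac nq$ (this is where property (ii), the condition $q<2$, and the extra assumption $\lambda\ge2$ for $q=1$ get used, since for $q=1$ the naive algebra property $M_{p,1}^s$ being an algebra needs $s\ge0$ compatible with $p'$, and one needs the multilinear count to close); and (b) checking the convergence of the oscillatory time integral with the sharp exponent, i.e. confirming that $\lambda>\lambda_0(n)$ is the exact threshold for $\int_0^{|t|}(1+|t|-\tau)^{-\alpha}(1+\tau)^{-\lambda\alpha}d\tau\lesssim(1+|t|)^{-\alpha}$, which is a routine but slightly delicate splitting of the integral into $[0,|t|/2]$ and $[|t|/2,|t|]$.
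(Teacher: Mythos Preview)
Your proposal is correct and follows essentially the same route as the paper: a Banach fixed-point argument in $\mathcal{L}^\infty_{\alpha,s}$, with the linear decay estimates of Section~\ref{section2} (Lemmas \ref{LinB1B2(1+t)}--\ref{LinB1B3(1+t)}) for the free evolution, the multilinear modulation-space product estimates (Lemmas \ref{Ib1Opcion2}, \ref{Ib1}, \ref{Ib2}) to obtain $\|u^\lambda\|_{M^s_{p',q}}\lesssim\|u\|_{M^s_{p,q}}^\lambda$ under the stated range of $s,q$, and the splitting of $\int_0^t(1+t-\tau)^{-\alpha}(1+\tau)^{-\alpha\lambda}d\tau$ at $t/2$ using $\alpha\lambda>1$ (equivalent to $\lambda>\lambda_0(n)$). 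The paper packages the Duhamel/contraction step as Proposition~\ref{non_est} and then runs the fixed-point argument exactly as you outline, including the Lipschitz dependence via subtraction of the two integral equations.
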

\begin{remark}\label{global2a}
System (\ref{EquiSyst}) has not a scaling relation which makes this
system more awkward than related problems such as the Schr\"odinger or semilinear heat equations
with nonlinearities of type $f(u)=u^\lambda$ (\cite{F-V2,F-V1}). However, as point out in Ferreira \cite{Lucas1}, system (\ref{EquiSyst}) has an intrinsical scaling given by 
\begin{align}\label{sca}
[u,v]\rightarrow [u_\rho, v_\rho]:=\rho^{\frac{2}{\lambda-1}}[u(\rho x,\rho^2 t), v(\rho x,\rho^2 t)], \  \   \mbox{for}\ \rho>0.
\end{align}
Thus, if $\beta=\frac{1-\alpha}{\lambda-1}>0,$ with $\lambda>1,$ $\alpha=n\left(\frac 12-\frac 1p\right)<1,$ $p=\lambda+1,$ then $\beta$ is the unique one such that the norm $ \Vert [u,v]\Vert_{\mathcal{H}^\infty_{\beta,s}}$ defined by 
\begin{align}
 \Vert [u,v]\Vert_{\mathcal{H}^\infty_{\beta,s}}:=\sup_{-\infty<t<\infty}\vert t \vert^{\beta}(\Vert u(t)\Vert_{M_{p,q}^s}+\Vert v(t)\Vert_{D^{-1}JM_{p,q}^s}),
\end{align}
is invariant by (\ref{sca}). Condition $\beta>0$ is equivalent to $\lambda<\lambda_1(n).$ Comparing the norms $\Vert [u,v]\Vert_{\mathcal{H}^\infty_{\beta,s}}$ and $\Vert [u,v]\Vert_{\mathcal{L}^\infty_{\alpha,s}}$ it holds that
 $0<\beta\leq \alpha$ if and only if $\lambda_1(n)>\lambda\geq\lambda_0(n).$ Consequently, it holds that $ \Vert [u,v]\Vert_{\mathcal{H}^\infty_{\beta,s}}\leq \Vert [u,v]\Vert_{\mathcal{L}^\infty_{\alpha,s}}$ and $\mathcal{L}^\infty_{\alpha,s}\subset \mathcal{H}^\infty_{\beta,s},$ for $\lambda_{1}(n)>\lambda\geq\lambda_0(n).$ 
\end{remark}
\begin{remark}\label{global2}
Consider $p,q,s$ as in Theorem \ref{global1} and $1<\lambda<\lambda_1(n).$ There exists $\epsilon>0$ such that if 
\begin{align}
&\sup_{-\infty<t<\infty}\vert t\vert^\beta\Vert B_1(t)u_0\Vert_{M_{p,q}^s}+\sup_{\infty<t<\infty}\vert t\vert^\beta\Vert B_2(t)v_0\Vert_{M_{p,q}^s}<\frac{\epsilon}{2},\label{est14}\\
&\sup_{-\infty<t<\infty}\vert t\vert^\beta\Vert B_3(t)u_0\Vert_{D^{-1}JM_{p,q}^s}+\sup_{-\infty<t<\infty}\vert t\vert ^\beta\Vert B_1(t)v_0\Vert_{D^{-1}JM_{p,q}^s}<\frac{\epsilon}{2},\label{est15}
\end{align}
then the IVP (\ref{EquiSyst}) has a unique global mild solution $[u,v]:(-\infty,\infty)\rightarrow M^s_{p,q}\times D^{-1}JM^s_{p,q}$ satisfying
$\Vert [u,v]\Vert_{\mathcal{H}^\infty_{\beta,s}}<\infty$ (See Remarks \ref{esti_comp}, \ref{rem2} and \ref{rem4} below).
\end{remark}
Theorem \ref{global1} excludes the case $M_{2,1}.$ Next theorem ensures the existence of global solution for initial data in $M_{2,1}\times M_{2,1}.$ 
In this case, the relations $B^{n/2}_{2,1}\subset M_{2,1}\subset L^\infty\cap L^2$ hold. 
\begin{theo}\label{global4}  Let $n\geq 1,$ $p\in [2+\frac{4}{n},\lambda+1]\cap\mathbb{N}$ and $\lambda\in\mathbb{N}$ such that $\lambda>1+\frac{4}{n}.$ Assume that $u_0,v_0 \in M_{2,1}.$ Then, there exists $\epsilon>0$ small enough such that if $\Vert u_0\Vert_{M_{2,1}}\times \Vert v_0\Vert_{M_{2,1}}\leq \epsilon,$ then (\ref{EquiSyst}) has a unique global solution 
\[ [u,v]\in [C(\mathbb{R}; M_{2,1})\cap l_{\square}^1(L^p(\mathbb{R};L^p))]\times  C(\mathbb{R};D^{-1}JM_{2,1}).\]
\end{theo}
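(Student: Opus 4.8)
The plan is to set up a fixed-point argument for the integral equation (\ref{ms5}) in a complete metric space built on the intersection
$[C(\mathbb{R};M_{2,1})\cap l_\square^1(L^p(\mathbb{R};L^p))]\times C(\mathbb{R};D^{-1}JM_{2,1})$,
exactly as in the scheme of Wang--Hudzik \cite{BaoxHudz} for the Schr\"odinger and Klein--Gordon equations, but adapted to the Boussinesq group $B(t)$. First I would record the two ingredients that make this work: (a) the linear estimates from Section \ref{section2} for $B_1,B_2,B_3$ — in particular that $B(t)$ is bounded on $M_{2,1}\times D^{-1}JM_{2,1}$ uniformly in $t$, and that the frequency-localized pieces $\square_k B_j(t)$ obey a dispersive (Strichartz-type) decay on each cube $Q_k$ allowing one to gain the $l_\square^1(L^p_tL^p_x)$ norm from $M_{2,1}$ data; and (b) the multilinear/algebra estimate for the nonlinearity $f(u)=u^\lambda$ in these spaces. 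The condition $p\in[2+\tfrac4n,\lambda+1]\cap\mathbb N$ with $\lambda>1+\tfrac4n$ is precisely the admissibility window for which the relevant Strichartz exponent is available (this is where $2+\tfrac4n$ enters, as the endpoint-type exponent for the Boussinesq dispersion in dimension $n$) and for which $\lambda+1\ge p$ so that $L^p$-type norms close under the $\lambda$-th power.

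The key steps, in order, would be: (1) Define $\Phi[u,v](t) := B(t)[u_0,v_0] - \int_0^t B(t-\tau)[0,f(u(\tau))]\,d\tau$ and the ball
$\mathcal{X}_\epsilon = \{[u,v] : \|u\|_{C(\mathbb{R};M_{2,1})} + \|u\|_{l_\square^1(L^p_tL^p_x)} + \|v\|_{C(\mathbb{R};D^{-1}JM_{2,1})} \le 2\tilde C\epsilon\}$ with the natural metric. (2) Linear part: using the Section \ref{section2} estimates and the embedding $M_{2,1}\hookrightarrow L^\infty\cap L^2$, bound $\|B(t)[u_0,v_0]\|_{\mathcal X} \lesssim \|u_0\|_{M_{2,1}}+\|v_0\|_{M_{2,1}}$. (3) Nonlinear estimate: show
$\big\|\int_0^t B(t-\tau)[0,f(u(\tau))]\,d\tau\big\|_{\mathcal X} \lesssim \|u\|_{l_\square^1(L^p_tL^p_x)}^{\lambda}$
(plus lower-order terms controlled by the $C(\mathbb R;M_{2,1})$ norm). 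This uses the dual Strichartz/retarded estimate for $B_2(t-\tau)=-|\xi|\langle\xi\rangle^{-1}\sin(\cdot)$ acting between $l_\square^{1}$-type spaces, together with the fact that $\square_k(u^\lambda)$ is supported essentially where $\xi\in k+[-\lambda,\lambda]^n$, so one can distribute the $l^1_k$ sum by Young's inequality on $\mathbb Z^n$ and then use $\|u^\lambda\|_{L^{p'}_x}\le \|u\|_{L^{\lambda p'}_x}^\lambda$ with $\lambda p' = p$ when $p=\lambda+1$, or interpolate/embed when $p<\lambda+1$. (4) Difference estimate: the same multilinear bound gives a contraction once $\epsilon$ is small, by the telescoping identity $a^\lambda-b^\lambda=(a-b)\sum_{j=0}^{\lambda-1}a^jb^{\lambda-1-j}$ (here $\lambda\in\mathbb N$ is used crucially, making $f$ a genuine polynomial). (5) Conclude existence and uniqueness of a fixed point in $\mathcal X_\epsilon$ by the Banach contraction principle, and verify $[u,v]$ has the claimed continuity in $t$ by a standard density/continuity-of-the-Duhamel-term argument.

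The main obstacle I expect is step (3), the nonlinear estimate: one must simultaneously control $f(u)=u^\lambda$ in the $l_\square^1(L^{p'}_tL^{p'}_x)$-type space needed to feed back into the retarded estimate, and this requires a careful frequency-by-frequency bookkeeping because $M_{2,1}$-multiplication is governed by the $\ell^1(\mathbb Z^n)$ convolution structure of the $\square_k$'s — naively one loses summability when raising to the $\lambda$-th power. The trick is that only $\mathcal O(1)$ (depending on $\lambda,n$) neighboring cubes interact, so one can bound $\|\square_k(u^\lambda)\|_{L^{p'}_x} \lesssim \sum \|\square_{k_1}u\|\cdots\|\square_{k_\lambda}u\|$ over $k_1+\cdots+k_\lambda$ near $k$ and then take $\ell^1_k$-norm, at which point Young's convolution inequality on $\ell^1(\mathbb Z^n)^{\otimes\lambda}$ closes the estimate — this is exactly the mechanism from \cite{BaoxHudz}. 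A secondary technical point is checking the time-integrability: one needs the decay of $\square_k B_2(t)$ on $L^p_x$ (an $|t|^{-\alpha}$-type bound with $\alpha=n(\tfrac12-\tfrac1p)$) to be good enough that the retarded integral converges globally in time, which is where the lower bound $p\ge 2+\tfrac4n$, equivalently $\alpha\ge \tfrac{2}{?}$... is used, matching the global (rather than merely local) nature of the conclusion.
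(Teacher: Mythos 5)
Your proposal follows essentially the same route as the paper: a contraction argument in frequency-uniform Strichartz spaces, using the $l_{\square}^1$-Strichartz estimates for $B_1,B_2,B_3$ (Propositions \ref{Strichartz1}--\ref{Strichartz2}) together with the Wang--Hudzik multilinear estimate (Lemma \ref{BaoHud}) to handle $f(u)=u^{\lambda}$, the lower bound $p\ge 2+\tfrac{4}{n}$ being exactly the admissibility condition $\gamma=\sigma=p\ge\gamma_{p}$. The one refinement you should make is that the resolution space for $u$ must be $X=l_{\square}^1(L^{\infty}(\mathbb{R};L^2))\cap l_{\square}^1(L^{p}(\mathbb{R};L^{p}))$ rather than $C(\mathbb{R};M_{2,1})\cap l_{\square}^1(L^{p}(\mathbb{R};L^{p}))$: when $p<\lambda+1$ the remaining $\lambda+1-p$ factors of $u$ are placed in $l_{\square}^1(L^{\infty}(\mathbb{R};L^{\infty}))$ and controlled via $\Vert\square_k u\Vert_{L^{\infty}}\apprle\Vert\square_k u\Vert_{L^{2}}$ by the norm $\sum_{k}\sup_{t}\Vert\square_k u(t)\Vert_{L^{2}}$, which is strictly stronger than $\sup_{t}\Vert u(t)\Vert_{M_{2,1}}=\sup_{t}\sum_{k}\Vert\square_k u(t)\Vert_{L^{2}}$, so the ball must carry this stronger norm and the stated $C(\mathbb{R};M_{2,1})$ regularity is then recovered a posteriori, as in the paper.
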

Theorem \ref{global1} provides the existence of global solution for $\lambda>\lambda_0(n).$ For $\lambda\leq \lambda_0(n)$ we are able to ensure a local in time solution. This is the content of next theorem.
\begin{theo}\label{local1}  Asume that $\lambda$ is a positive integer such that $1<\lambda\leq \lambda_0(n),$ $p=\lambda+1,$  $1\leq q < \infty,$ $n-\frac{n}{q} \leq s < \frac{n}{q}.$ If $q=1$ assume also that $\lambda\geq 2.$ Then, if $[u_0,v_0]\in M^{s}_{p',q}\times M^{s}_{p',q}$, there exists $0<T<\infty$ such that the IVP (\ref{EquiSyst}) has a unique local mild solution $[u,v]\in\mathcal{L}^T_{\alpha,s}.$ Moreover, the data-solution map $[u_0,v_0]\longmapsto [u,v]$ from $M^{s}_{p',q}\times M^{s}_{p',q}$ into $\mathcal{L}^T_{\alpha,s}$ is locally Lipschitz.

\end{theo}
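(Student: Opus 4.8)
The plan is to run a contraction mapping argument on the integral formulation (\ref{ms5}) in the complete metric space $\mathcal{L}^T_{\alpha,s}$, mimicking the proof of Theorem \ref{global1} but now exploiting the finiteness of the time interval to absorb the weaker decay available when $\lambda\le\lambda_0(n)$. Define the map $\Phi[u,v](t)=B(t)[u_0,v_0]-\int_0^t B(t-\tau)[0,f(u(\tau))]\,d\tau$. The first step is to control the linear part: using the linear estimates for $B_1,B_2,B_3$ established in Section \ref{section2} together with embedding (i) and the isomorphism (v), one shows $\sup_{|t|<T}(1+|t|)^\alpha(\Vert B_1(t)u_0+B_2(t)v_0\Vert_{M^s_{p,q}}+\Vert B_3(t)u_0+B_1(t)v_0\Vert_{D^{-1}JM^s_{p,q}})\le C(\Vert u_0\Vert_{M^s_{p',q}}+\Vert v_0\Vert_{M^s_{p',q}})$, i.e.\ $B(t)[u_0,v_0]\in\mathcal{L}^T_{\alpha,s}$ with the bound independent of $T$.

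The second step is the nonlinear estimate. Here one needs the algebra/multiplication property of modulation spaces (which holds for $M^s_{p,q}$ in the stated range of $s$, in particular $\Vert u^\lambda\Vert_{M^s_{p',q}}\lesssim \Vert u\Vert_{M^s_{p,q}}^\lambda$ after the appropriate Hölder-type bookkeeping in the frequency-uniform pieces $\square_k$ — this is exactly where the hypotheses $p=\lambda+1$, $\lambda\in\mathbb N$, and $n-n/q\le s<n/q$, with $\lambda\ge 2$ if $q=1$, enter), combined with the decay estimate for $B_2(t-\tau)$ acting on $f(u(\tau))$. Applying this inside the Duhamel integral and pulling out the weight gives a bound of the form
\begin{equation*}
\Vert \Phi[u,v]-B(t)[u_0,v_0]\Vert_{\mathcal{L}^T_{\alpha,s}}\le C\Big(\int_0^{T}(1+|t-\tau|)^{-\gamma}(1+|\tau|)^{-\lambda\alpha}\,d\tau\Big)\,\Vert[u,v]\Vert_{\mathcal{L}^T_{\alpha,s}}^{\lambda},
\end{equation*}
for a suitable exponent $\gamma$ coming from the dispersive decay of the multiplier $|\xi|\langle\xi\rangle^{-1}\sin(t|\xi|\langle\xi\rangle)$ on modulation spaces. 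When $\lambda>\lambda_0(n)$ the time integral converges on all of $\mathbb R$ (that is Theorem \ref{global1}); when $\lambda\le\lambda_0(n)$ it need not, but on the finite interval $(-T,T)$ it is bounded by a quantity $\kappa(T)$ with $\kappa(T)\to 0$ as $T\to 0$. Choosing $T$ small enough that $\kappa(T)$ times the relevant power of the (now arbitrary, not small) data norm is $<1/2$, the map $\Phi$ sends a ball of $\mathcal{L}^T_{\alpha,s}$ into itself; an entirely analogous estimate on the difference $\Phi[u_1,v_1]-\Phi[u_2,v_2]$, using that $u_1^\lambda-u_2^\lambda$ factors through $u_1-u_2$ times a polynomial in $u_1,u_2$, gives the contraction. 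The Banach fixed point theorem then yields the unique local mild solution, and differentiating the fixed point identity in the data (or simply repeating the difference estimate with two different data pairs) gives the local Lipschitz dependence of $[u_0,v_0]\mapsto[u,v]$.

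The main obstacle I expect is obtaining the decay exponent $\gamma$ in the dispersive estimate for $B_2$ on modulation spaces sharp enough (and uniform in the frequency index $k$ after summation against the $\ell^q$ weight) that the finite-time integral $\int_0^T(1+|t-\tau|)^{-\gamma}(1+|\tau|)^{-\lambda\alpha}d\tau$ is genuinely $o(1)$ as $T\to0$ — in particular one must check the borderline case $q=1$, which is why the extra hypothesis $\lambda\ge 2$ is imposed there, and one must verify that the endpoint $s=n-n/q$ and the near-endpoint $s$ close to $n/q$ are still covered by the multiplication estimate. Everything else is bookkeeping; the structural point is simply that finiteness of $T$ replaces the smallness of the data that was needed for the global result.
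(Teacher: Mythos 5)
Your proposal is correct and follows essentially the same route as the paper: a contraction in $\mathcal{L}^T_{\alpha,s}$ using the singularity-free linear decay estimates $(1+|t|)^{-\alpha}$ (Lemmas \ref{LinB1B2(1+t)}--\ref{LinB1B3(1+t)}), the multiplication lemmas for $M^s_{p,q}$ (Lemmas \ref{Ib1Opcion2}, \ref{Ib1}, \ref{Ib2}), and the observation that the Duhamel time integral carries a factor $O(T)$ (Proposition \ref{non_estLoc}), so smallness of $T$ replaces smallness of the data. The "obstacle" you flag about the exponent $\gamma$ is in fact harmless: with $\gamma=\alpha$ the integrand $(1+|t-\tau|)^{-\alpha}(1+|\tau|)^{-\lambda\alpha}$ is bounded by $1$, so the integral over $(0,t)\subset(0,T)$ is trivially $\lesssim T$, exactly as the paper exploits.
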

Now we establish a result on the scattering theory which describes the asymptotic behavior of solutions for the
Boussinesq system in the framework of modulation spaces. We find an initial
data $[u_{0}^{\pm},v_{0}^{\pm}]$ such that the solution $[u^{\pm
},v^{\pm}]$ of the linear problem associated to the system (\ref{EquiSyst}), with initial
data $[u_{0}^{\pm},v_{0}^{\pm}],$ describes the asymptotic
behavior of the global solution provided by Theorem \ref{global1}. This is the
subject of the next theorem.

\begin{theo}
\label{teoscat}\textit{(Scattering)}. Assume the conditions on Theorem \ref{global1}, and let $[u,v]$ be the solution of
(\ref{GoodBous}) provided by Theorem \ref{global1} with data $[u_{0},v_{0}]\in M^{s}_{p',q}\times M^{s}_{p',q}.$ Then there exists $[u_{0}^{\pm}%
,v_{0}^{\pm}]\in \mathcal{L}^\infty_{\alpha,s}$ such that
\begin{align}
\Vert\lbrack u(t)-u^{\pm}(t),v(t)-v^{\pm}(t)]\Vert_{M^s_{p,q}\times D^{-1}JM^s_{p,q}}  &  =O(\left\vert t\right\vert
^{1-\alpha\lambda})\ \mbox{as}\ t\rightarrow\pm\infty
,\label{aux-scat1}
\end{align}
where $[u^{+}(t),v^{+}(t)],[u^{-}(t),v^{-}(t)]$ stand for the unique global
mild solutions of the linear problem associated to (\ref{EquiSyst}) with initial
data $[u_{0}^{+},v_{0}^{+}]$ and $[u_{0}^{-},v_{0}^{-}]$, respectively.
\end{theo}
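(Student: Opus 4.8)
The plan is to obtain the scattering states by propagating the Duhamel term backwards to infinity. Using the group law $B(t-\tau)=B(t)B(-\tau)$, I would first rewrite the mild solution provided by Theorem~\ref{global1} as
\[
[u(t),v(t)]=B(t)\Big([u_0,v_0]-\int_0^t B(-\tau)[0,f(u(\tau))]\,d\tau\Big),
\]
and set
\[
[u_0^{\pm},v_0^{\pm}]:=[u_0,v_0]-\int_0^{\pm\infty}B(-\tau)[0,f(u(\tau))]\,d\tau ,\qquad [u^{\pm}(t),v^{\pm}(t)]:=B(t)[u_0^{\pm},v_0^{\pm}].
\]
The main step is to show that the improper integrals converge. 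I would \emph{not} measure the integrand in $M^s_{p',q}$, on which the group $B$ is not bounded uniformly in $t$, but in $M^s_{2,q}$, on which it is (this is among the linear estimates of Section~\ref{section2}). Since $p'\le 2$, the embedding $M^s_{p',q}\hookrightarrow M^s_{2,q}$ together with the product estimate for $u^\lambda$ in modulation spaces already used in the proof of Theorem~\ref{global1} gives $\Vert f(u(\tau))\Vert_{M^s_{2,q}}\lesssim\Vert f(u(\tau))\Vert_{M^s_{p',q}}\lesssim\Vert u(\tau)\Vert_{M^s_{p,q}}^{\lambda}$, and then $\Vert[u,v]\Vert_{\mathcal{L}^\infty_{\alpha,s}}\le 2\epsilon$ yields
\[
\Vert B(-\tau)[0,f(u(\tau))]\Vert_{M^s_{2,q}\times M^s_{2,q}}\lesssim\epsilon^{\lambda}(1+|\tau|)^{-\alpha\lambda}.
\]
Since the hypothesis $\lambda>\lambda_0(n)$ is exactly the inequality $\alpha\lambda>1$, the right-hand side is integrable over $(0,\pm\infty)$, so $[u_0^{\pm},v_0^{\pm}]$ are well defined in $M^s_{2,q}\times M^s_{2,q}$ and $[u^{\pm},v^{\pm}]$ are the unique mild solutions of the linear problem with these data.

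Subtracting the two displays above, and using that $B(t)$ commutes with the (absolutely convergent) improper integral, I would obtain the identity
\[
[u(t),v(t)]-[u^{\pm}(t),v^{\pm}(t)]=\int_t^{\pm\infty}B(t-\tau)[0,f(u(\tau))]\,d\tau ,
\]
the right-hand side converging absolutely in $M^s_{p,q}\times D^{-1}JM^s_{p,q}$ for each fixed $t$ by the dispersive estimate of Section~\ref{section2}, $\Vert B(t-\tau)[0,g]\Vert_{M^s_{p,q}\times D^{-1}JM^s_{p,q}}\lesssim(1+|t-\tau|)^{-\alpha}\Vert g\Vert_{M^s_{p',q}}$, together with $\alpha(1+\lambda)>1$. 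For $t>0$ (the case $t<0$ being symmetric), the same dispersive estimate and $\Vert f(u(\tau))\Vert_{M^s_{p',q}}\lesssim\epsilon^{\lambda}(1+|\tau|)^{-\alpha\lambda}$ then give
\[
\Vert[u(t)-u^{+}(t),v(t)-v^{+}(t)]\Vert_{M^s_{p,q}\times D^{-1}JM^s_{p,q}}\lesssim\epsilon^{\lambda}\int_t^{\infty}(1+(\tau-t))^{-\alpha}(1+\tau)^{-\alpha\lambda}\,d\tau .
\]
Writing $(1+\tau)^{-\alpha\lambda}=(1+\tau)^{1-\alpha\lambda}(1+\tau)^{-1}\le(1+t)^{1-\alpha\lambda}(1+(\tau-t))^{-1}$ (legitimate since $1-\alpha\lambda<0$ and $\tau\ge\tau-t$), the last integral is at most $(1+t)^{1-\alpha\lambda}\int_0^{\infty}(1+\sigma)^{-1-\alpha}\,d\sigma\lesssim(1+t)^{1-\alpha\lambda}$, because $\alpha>0$; this is \eqref{aux-scat1}. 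A more careful splitting of the same integral in fact yields the bound $O((1+|t|)^{-\alpha})$ as well, so that $[u^{\pm},v^{\pm}]\in\mathcal{L}^\infty_{\alpha,s}$.

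The step I expect to be the main obstacle is the convergence of the integrals defining $[u_0^{\pm},v_0^{\pm}]$. Working in $M^s_{p',q}$ would cost a factor $(1+|\tau|)^{\alpha}$ from the time-growth of $B(-\tau)$ there and leave the weight $(1+|\tau|)^{\alpha(1-\lambda)}$, which is integrable only when $\alpha(\lambda-1)>1$ --- strictly stronger than the hypothesis $\lambda>\lambda_0(n)$. Using $p\ge 2$ to carry out the whole construction in $M^s_{2,q}$, where $B$ acts boundedly and uniformly in $t$ and the full decay $(1+|\tau|)^{-\alpha\lambda}$ of $f(u(\tau))$ survives, is precisely what makes the argument close under the hypotheses of Theorem~\ref{global1}; apart from that, everything reduces to the modulation-space product estimate and the linear estimates of Section~\ref{section2}.
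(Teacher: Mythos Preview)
Your proof is correct and follows the same strategy as the paper: define $[u_0^\pm,v_0^\pm]$ by extending the Duhamel integral to $\pm\infty$, subtract, and estimate the remaining tail $\int_t^{\pm\infty}B(t-\tau)[0,f(u(\tau))]\,d\tau$ via the dispersive estimate and $\Vert f(u(\tau))\Vert_{M^s_{p',q}}\lesssim(1+|\tau|)^{-\alpha\lambda}$. The paper is in fact terser than you are---it writes down $[u_0^+,v_0^+]$ without discussing convergence and bounds the tail simply by using $(1+|t-\tau|)^{-\alpha}\le1$ to get $\int_t^\infty(1+\tau)^{-\alpha\lambda}\,d\tau\lesssim t^{1-\alpha\lambda}$ directly; your $M^s_{2,q}$ argument for the existence of the scattering states and your remark that the tail is also $O((1+|t|)^{-\alpha})$ (hence $[u^\pm,v^\pm]\in\mathcal{L}^\infty_{\alpha,s}$) fill in points the paper leaves implicit.
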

\begin{theo}
\label{teoasymp}\textit{(Stability)}. Assume the conditions on Theorem \ref{global1}, and let $[u,v], [\tilde{u},\tilde{v}]$ be the solutions of
(\ref{GoodBous}) provided by Theorem \ref{global1} with data $[u_{0},v_{0}],$ $[\tilde{u}_{0},\tilde{v}_{0}]\in M^{s}_{p',q}\times M^{s}_{p',q},$ respectively. Then 
\begin{align*}
\lim_{\vert t\vert\rightarrow \infty}(1+\vert t\vert)^\alpha\Vert B(t)[u_0-\tilde{u}_0,v_0-\tilde{v}_0]\Vert_{M^s_{p,q}\times D^{-1}JM^s_{p,q}}=0
\end{align*}
if and only if
\begin{align*}
\lim_{\vert t\vert\rightarrow\infty}(1+\vert t\vert)^\alpha\Vert \lbrack u(t)-\tilde{u}(t),v(t)-\tilde{v}(t)]\Vert_{M^s_{p,q}\times D^{-1}JM^s_{p,q}} =0.
\end{align*}
\end{theo}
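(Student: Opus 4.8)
The plan is to exploit the integral formulation (\ref{ms5}) together with the decay estimates that underlie Theorem \ref{global1}. Write $[w,z] := [u-\tilde u, v-\tilde v]$. Subtracting the Duhamel formulas for $[u,v]$ and $[\tilde u,\tilde v]$ gives
\begin{equation*}
[w(t),z(t)] = B(t)[u_0-\tilde u_0, v_0-\tilde v_0] - \int_0^t B(t-\tau)[0, f(u(\tau))-f(\tilde u(\tau))]\, d\tau .
\end{equation*}
So the whole statement reduces to showing that the Duhamel (nonlinear) term decays strictly faster than $(1+|t|)^{-\alpha}$ — more precisely, that
\begin{equation*}
(1+|t|)^\alpha \Big\| \int_0^t B(t-\tau)[0, f(u(\tau))-f(\tilde u(\tau))]\, d\tau \Big\|_{M^s_{p,q}\times D^{-1}JM^s_{p,q}} \longrightarrow 0 \quad \text{as } |t|\to\infty .
\end{equation*}
Once this is established, the "if and only if" follows immediately: the linear flow $B(t)[u_0-\tilde u_0,v_0-\tilde v_0]$ and $[w(t),z(t)]$ differ by a term that is $o((1+|t|)^{-\alpha})$, so one of them is $o((1+|t|)^{-\alpha})$ iff the other is.

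First I would record the ingredients already available from the proof of Theorem \ref{global1}: the linear decay estimate for $B(t)$ on modulation spaces of the form $\|B(t)[0,g]\|_{M^s_{p,q}\times D^{-1}JM^s_{p,q}} \lesssim (1+|t|)^{-\alpha}\|g\|_{M^s_{p',q}}$ (this is exactly the Section \ref{section2} linear estimate used to close the fixed point), and the algebra/product estimate controlling $\|f(u)-f(\tilde u)\|_{M^s_{p',q}}$ by the difference $\|u-\tilde u\|_{M^s_{p,q}}$ times a power of the sizes of $u,\tilde u$ — since $f(u)=u^\lambda$ with $\lambda$ a positive integer, $f(u)-f(\tilde u) = (u-\tilde u)\sum_{j=0}^{\lambda-1} u^j\tilde u^{\lambda-1-j}$, and the bilinear estimate on $M^s_{p,q}$ used for existence applies term-by-term. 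Combining these, for each $\tau$,
\begin{equation*}
\big\| B(t-\tau)[0,f(u(\tau))-f(\tilde u(\tau))] \big\|_{M^s_{p,q}\times D^{-1}JM^s_{p,q}} \lesssim (1+|t-\tau|)^{-\alpha}\, |\tau|^{-\alpha\lambda}\, \epsilon^{\lambda-1}\,\|[w,z]\|_{\mathcal L^\infty_{\alpha,s}},
\end{equation*}
where I use $\|u(\tau)\|_{M^s_{p,q}} + \|\tilde u(\tau)\|_{M^s_{p,q}} \lesssim \epsilon(1+|\tau|)^{-\alpha}$ from Theorem \ref{global1} and the bound $\|[w,z]\|_{\mathcal L^\infty_{\alpha,s}}\le C(\|u_0-\tilde u_0\|+\|v_0-\tilde v_0\|)$ which itself comes from the Lipschitz dependence asserted in Theorem \ref{global1}.

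Next I would carry out the time-decay bookkeeping for the convolution integral $\int_0^{|t|}(1+|t-\tau|)^{-\alpha}(1+|\tau|)^{-\alpha\lambda}\,d\tau$. Splitting into $\tau\in(0,|t|/2)$ and $\tau\in(|t|/2,|t|)$ and using $\lambda\ge\lambda_0(n)>1$ (so $\alpha\lambda>\alpha$, and in fact $\alpha\lambda>1$ for the relevant range, guaranteeing integrability of $(1+|\tau|)^{-\alpha\lambda}$ near $\infty$) gives the pointwise bound $O(|t|^{1-\alpha\lambda})$ for the Duhamel term in $M^s_{p,q}\times D^{-1}JM^s_{p,q}$, hence after multiplying by $(1+|t|)^\alpha$ a bound $O(|t|^{1+\alpha-\alpha\lambda})$; the exponent $1+\alpha-\alpha\lambda<0$ precisely because $0<\beta\le\alpha$ and $\beta=\frac{1-\alpha}{\lambda-1}>0$ as in Remark \ref{global2a}, i.e. $\alpha\lambda-\alpha>1-\alpha$ is exactly $\alpha(\lambda-1)>1-\alpha$, equivalently $\alpha>\beta$... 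I would double-check the sharp arithmetic here, but the conditions of Theorem \ref{global1} are exactly calibrated so that this exponent is negative (it is the same decay rate $|t|^{1-\alpha\lambda}$ appearing in Theorem \ref{teoscat}). To upgrade the $O$-bound to a genuine limit $\to 0$, I would additionally split off a tail: fix large $R$, bound $\int_R^{|t|/2}$ and $\int_{|t|-R}^{|t|}$ crudely and the middle piece using that $(1+|\tau|)^{-\alpha\lambda}$ is small for $\tau\ge R$; a routine $\varepsilon/2$ argument then shows the quantity tends to $0$, not merely stays bounded.

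The main obstacle is the sharpness of the exponent count: one must verify that under the hypotheses $\lambda>\lambda_0(n)$, $p=\lambda+1$, $\alpha=n(\tfrac12-\tfrac1p)$, the combination $1+\alpha-\alpha\lambda$ is strictly negative (equivalently that $\alpha\lambda > 1+\alpha$), and that the singular weight $|\tau|^{-\alpha\lambda}$ near $\tau=0$ is still integrable — this is where the condition $\alpha\lambda<1$ would be fatal, so one must instead observe that near $\tau = 0$ one has $u(\tau),\tilde u(\tau)$ bounded in $M^s_{p,q}$ (no blow-up at finite time), so the integrand is in fact $O((1+|t-\tau|)^{-\alpha})$ there with no $\tau$-singularity, and the singular weight only appears as the integrable large-$\tau$ factor. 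Handling the two regimes $\tau$ near $0$ and $\tau$ near $t$ separately, exactly as in the proof of the scattering theorem, removes this apparent difficulty; everything else is a direct consequence of the linear decay estimate and the product estimate already used to prove Theorem \ref{global1}.
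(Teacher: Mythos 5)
Your reduction of the whole theorem to the claim that the Duhamel term is $o((1+|t|)^{-\alpha})$ does not work, and this is the central gap. Write $g(\tau)=f(u(\tau))-f(\tilde u(\tau))$ and split $\int_0^t=\int_0^R+\int_R^t$ for a fixed large $R$. The piece $\int_0^R B(t-\tau)[0,g(\tau)]\,d\tau$ is, up to a bounded time translation, the free evolution of the fixed datum $\int_0^R B(-\tau)[0,g(\tau)]\,d\tau$, and the linear estimates give only the generic decay $(1+t)^{-\alpha}$ for it, with a constant proportional to $\int_0^R\Vert g(\tau)\Vert_{M^s_{p',q}}\,d\tau$, which is nonzero unless $u\equiv\tilde u$. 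Your own remark that near $\tau=0$ the integrand is ``$O((1+|t-\tau|)^{-\alpha})$ with no $\tau$-singularity'' concedes precisely this: that contribution is $O((1+t)^{-\alpha})$, not $o((1+t)^{-\alpha})$, so the equivalence is not ``immediate''. The accompanying exponent count is also wrong on two points. First, $\int_0^t(1+|t-\tau|)^{-\alpha}(1+\tau)^{-\alpha\lambda}\,d\tau$ is of order $(1+t)^{-\alpha}$ (the region $\tau\le t/2$ alone contributes $(1+t)^{-\alpha}\int_0^{\infty}(1+\tau)^{-\alpha\lambda}\,d\tau$), not $O(|t|^{1-\alpha\lambda})$; the rate $|t|^{1-\alpha\lambda}$ in Theorem \ref{teoscat} comes from a tail integral $\int_t^\infty$, where the small-$\tau$ region is absent. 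Second, $1+\alpha-\alpha\lambda<0$ is equivalent to $\alpha(\lambda-1)>1$, which does not follow from $\lambda>\lambda_0(n)$: that hypothesis gives only $\alpha\lambda>1$. For instance $n=1$, $\lambda=4>\lambda_0(1)$ gives $\alpha=3/10$, $\alpha\lambda=6/5>1$, but $1+\alpha-\alpha\lambda=1/10>0$. Your identification of ``$\alpha\lambda-\alpha>1-\alpha$'' with the negativity of $1+\alpha-\alpha\lambda$ conflates $\alpha(\lambda-1)>1-\alpha$ with $\alpha(\lambda-1)>1$.

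The mechanism the paper actually uses is absorption through the smallness of $\epsilon$, not superlinear decay of the Duhamel term. Keeping $\Vert u(\tau)-\tilde u(\tau)\Vert_{M^s_{p,q}}$ inside the integral, the weighted Duhamel term is bounded by $C2^{\lambda}\epsilon^{\lambda-1}H$, where
\begin{equation*}
H=\limsup_{t\rightarrow\infty}(1+t)^{\alpha}\Vert[u(t)-\tilde u(t),v(t)-\tilde v(t)]\Vert_{M^s_{p,q}\times D^{-1}JM^s_{p,q}}
\end{equation*}
is finite by Theorem \ref{global1}; this yields $H\le\limsup_{t\to\infty}(1+t)^{\alpha}J_1+C2^{\lambda}\epsilon^{\lambda-1}H$, and since $C2^{\lambda}\epsilon^{\lambda-1}<1$ the last term is absorbed, giving $H=0$ when the linear part tends to zero, with the converse direction following from the same bound on the Duhamel term once $H=0$. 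Your version, which estimates $\Vert u(\tau)-\tilde u(\tau)\Vert_{M^s_{p,q}}\lesssim(1+\tau)^{-\alpha}\Vert[w,z]\Vert_{\mathcal L^\infty_{\alpha,s}}$ with the global sup norm as the constant, can only ever produce an $O((1+t)^{-\alpha})$ bound and cannot yield a quantity tending to zero; some form of this bootstrap on $H$ is indispensable.
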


\section{Linear and nonlinear estimates}\label{section2}
In this section we establish some linear estimates for the group $B$ defined in (\ref{g1}). We start denoting
\[
l_2(t)g:=J^{-1}DB_3(t)g=[\sin(t|\xi|\langle\xi\rangle)\widehat{g}(\xi)]^{\vee}.
\]
From Theorem 2.1 in Linares and Scialom  \cite{LinaresScialom},  and the proof of Lemma 2.1 and Remark 1.4 in Ferreira \cite{Lucas1}, the following estimates hold.
\begin{lemm}\label{LinSciFel}
Let $2\leq p \leq \infty$ and $i=1,2.$ Then for all $g\in L^{p'} $ it holds
\[ \|B_i(t)g\|_{L^p}\leq c |t|^{-\alpha}\|g\|_{L^{p'}},\]
\[ \|l_2(t)g\|_{L^p}\leq c |t|^{-\alpha}\|g\|_{L^{p'}},\]
with $\frac 1p+\frac{1}{p'}=1$ and $\alpha= n\left(\frac 12-\frac 1p\right).$
\end{lemm}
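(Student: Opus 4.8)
The plan is to derive the $L^p$--$L^{p'}$ decay estimates for the multiplier operators $B_1(t),B_2(t)$ and for $l_2(t)$ directly from the one-dimensional stationary-phase/oscillatory-integral analysis already available in the literature. First I would recall from Linares--Scialom \cite{LinaresScialom} and Ferreira \cite{Lucas1} that the free Boussinesq group $S(t)=e^{it|\xi|\langle\xi\rangle}$ (whose real and imaginary parts, together with the harmless Fourier multipliers $|\xi|\langle\xi\rangle^{-1}$ and $|\xi|^{-1}\langle\xi\rangle$, generate $B_1,B_2$ and $l_2$) satisfies the dispersive bound $\|S(t)g\|_{L^\infty}\le c|t|^{-n/2}\|g\|_{L^1}$; the point is that the phase function $\phi(\xi)=|\xi|\langle\xi\rangle$ has a nondegenerate Hessian away from the origin, and near the origin $\phi(\xi)\approx|\xi|$ behaves like the wave phase, which on $\mathbb{R}^n$ still gives the $|t|^{-n/2}$ rate after the usual Littlewood--Paley/dyadic splitting (this is exactly the content invoked in \cite{Lucas1}, Lemma 2.1 and Remark 1.4). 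Combining this $L^1\to L^\infty$ bound with the trivial $L^2\to L^2$ bound (the symbols have modulus $\le 1$, using that $|\xi|\langle\xi\rangle^{-1}\le 1$ and that $|\xi|^{-1}\langle\xi\rangle\sin(t|\xi|\langle\xi\rangle)$ is cancelled by the extra $D J^{-1}$ in the definition of $l_2$) and interpolating via the Riesz--Thorin theorem yields
\[
\|B_i(t)g\|_{L^p}\le c|t|^{-\alpha}\|g\|_{L^{p'}},\qquad \|l_2(t)g\|_{L^p}\le c|t|^{-\alpha}\|g\|_{L^{p'}},
\]
for $2\le p\le\infty$ with $\alpha=n(\tfrac12-\tfrac1p)$, which is the assertion.

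Concretely, the steps in order would be: (1) write $B_1(t)=\tfrac12(S(t)+S(-t))$ and $B_2(t)=\tfrac{i}{2}m(D)(S(t)-S(-t))$ with $m(\xi)=|\xi|\langle\xi\rangle^{-1}$ a bounded multiplier, and $l_2(t)=\tfrac{1}{2i}(S(t)-S(-t))$, so everything reduces to $S(\pm t)$ up to an $L^p$-bounded multiplier (boundedness of $m(D)$ on $L^p$ for $1<p<\infty$ follows from a Mikhlin-type argument, and for the endpoints one keeps $m(D)$ attached to $S(t)$ inside the stationary phase); (2) establish the $L^1\to L^\infty$ bound $\|S(t)g\|_\infty\le c|t|^{-n/2}\|g\|_1$ by decomposing the frequency space into the unit ball and its complement, handling the low frequencies with the van der Corput lemma applied to the wave-like phase $|\xi|$ (summing the dyadic pieces, whose contributions form a convergent geometric-type series because $\alpha<n$) and the high frequencies with stationary phase using the nondegeneracy of $D^2\phi$; (3) note the $L^2\to L^2$ bound is immediate since $\|S(t)\|_{L^2\to L^2}=1$ and $m$ is bounded; (4) interpolate. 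Throughout, the auxiliary multipliers $|\xi|\langle\xi\rangle^{-1}$ and $|\xi|^{-1}\langle\xi\rangle$ are harmless precisely because of how $l_2$ is defined with the compensating $J^{-1}D$ factor.

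The main obstacle is the behavior near the frequency origin: the phase $\phi(\xi)=|\xi|\langle\xi\rangle$ is not smooth at $\xi=0$ and degenerates there, so one cannot apply stationary phase globally. The fix — already implicit in the cited references — is the dyadic decomposition of the low-frequency region into shells $|\xi|\sim 2^{-j}$, on each of which $\phi\approx|\xi|$ and a rescaled van der Corput estimate gives a bound $\sim 2^{-jn}\cdot(2^{-j}|t|)^{-n/2}$ when $2^{-j}|t|\gtrsim1$ and the trivial volume bound $2^{-jn}$ otherwise; summing over $j$ produces the single rate $|t|^{-n/2}$ (with no logarithmic loss because $n/2<n$). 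Since all of this is exactly "Theorem 2.1 in \cite{LinaresScialom}" together with "Lemma 2.1 and Remark 1.4 in \cite{Lucas1}" as cited in the statement, the proof here can legitimately be a short one: quote those results for $S(t)$, reduce $B_1,B_2,l_2$ to $S(\pm t)$ modulo bounded multipliers, and interpolate. I would therefore present it as a two- or three-line deduction rather than re-deriving the oscillatory-integral estimates from scratch.
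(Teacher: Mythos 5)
Your ``short version'' of the argument coincides with what the paper actually does: the paper offers no proof of Lemma~\ref{LinSciFel}, it simply invokes Theorem~2.1 of \cite{LinaresScialom} together with Lemma~2.1 and Remark~1.4 of \cite{Lucas1}. Your reduction step is also sound and is implicitly the intended one: $B_1(t)=\tfrac12\bigl(S(t)+S(-t)\bigr)$, $B_2(t)$ differs from $\tfrac{1}{2i}\bigl(S(t)-S(-t)\bigr)$ by the bounded multiplier $|\xi|\langle\xi\rangle^{-1}$, and the symbol of $l_2=J^{-1}DB_3$ is exactly $\sin(t|\xi|\langle\xi\rangle)$ because the factors $|\xi|\langle\xi\rangle^{-1}$ and $|\xi|^{-1}\langle\xi\rangle$ cancel; Riesz--Thorin between $L^2\to L^2$ and $L^1\to L^\infty$ then gives the stated range of $p$. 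Up to that point you are doing exactly what the paper does, only more explicitly.

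The genuine gap is in your step (2), the claimed bound $\|S(t)g\|_{\infty}\lesssim|t|^{-n/2}\|g\|_{1}$ via dyadic shells at low frequency. On $|\xi|\sim 2^{-j}$ the phase $\phi(\xi)=|\xi|\langle\xi\rangle=|\xi|+\tfrac12|\xi|^{3}+O(|\xi|^{5})$ is wave-like: its Hessian has rank $n-1$, since the radial second derivative $\phi''(r)=r(3+2r^{2})(1+r^{2})^{-3/2}\sim 3r$ vanishes at the origin. Stationary phase therefore gives at most $2^{-jn}\,(2^{-j}|t|)^{-(n-1)/2}\,(2^{-3j}|t|)^{-1/2}$ per shell, not the $2^{-jn}(2^{-j}|t|)^{-n/2}$ you assert; the radial direction is governed by the cubic correction and behaves like an Airy integral. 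Summing these corrected bounds yields $|t|^{-n/2}2^{\,j(1-n/2)}$ per shell, which for $n=1$ produces only $O(|t|^{-1/3})$ for the low-frequency kernel (not $O(|t|^{-1/2})$) and is logarithmically borderline for $n=2$. So the dyadic argument as written does not close, and the parenthetical ``no logarithmic loss because $n/2<n$'' addresses the summation rather than the real obstruction, which is the degenerate Hessian. This is precisely why the cited references treat low and high frequencies separately, and it is worth checking carefully under which hypotheses (range of $p$, possible derivative loss at low frequency) they actually deliver the clean rate $|t|^{-n(1/2-1/p)}$ up to $p=\infty$. If you keep the lemma as a two-line deduction from \cite{LinaresScialom} and \cite{Lucas1}, as the paper does, drop the oscillatory-integral sketch or replace it with the correct low-frequency analysis.
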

Using Lemma \ref{LinSciFel} we get the following lemma.
\begin{lemm}\label{LinB1B2}
Let $2\leq p \leq \infty,$  $i=1,2$  and $s\in \mathbb{R}.$ Then for all $g\in M_{p',q}^s $ it holds
\begin{align}
\|B_i(t)g\|_{M_{p,q}^s}\leq c |t|^{-\alpha}\|g\|_{M_{p',q}^s}\label{LinB1B2es}
\end{align}
with $\frac 1p+\frac{1}{p'}=1$ and $\alpha= n\left(\frac 12-\frac 1p\right).$
\end{lemm}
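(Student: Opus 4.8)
The plan is to derive the modulation-space estimate from the Lebesgue-space estimate in Lemma \ref{LinSciFel} by exploiting the fact that the Fourier multipliers $B_i(t)$ commute with the frequency-uniform decomposition operators $\square_k$, and then summing (or taking the supremum) over $k\in\mathbb{Z}^n$ with the weight $(1+|k|)^s$.

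First I would fix $t\neq 0$ and $i\in\{1,2\}$ and observe that, since $B_i(t)=\mathcal{F}^{-1}m_i(t,\xi)\mathcal{F}$ for the bounded symbols $m_1(t,\xi)=\cos(t|\xi|\langle\xi\rangle)$ and $m_2(t,\xi)=-|\xi|\langle\xi\rangle^{-1}\sin(t|\xi|\langle\xi\rangle)$, and $\square_k=\mathcal{F}^{-1}\sigma_k\mathcal{F}$, the operators $B_i(t)$ and $\square_k$ are both Fourier multipliers and hence commute: $\square_k B_i(t)g = B_i(t)\square_k g$ for every $k$. Applying Lemma \ref{LinSciFel} to the function $\square_k g\in L^{p'}$ (note $\square_k g\in L^{p'}$ because $\sigma_k$ is smooth and compactly supported, so $\square_k$ maps $\mathcal{S}'$ into functions that are locally in every $L^r$, and the frequency localization makes the relevant norms finite on the pieces we sum), I get
\[
\|\square_k B_i(t)g\|_{L^p}=\|B_i(t)\square_k g\|_{L^p}\leq c\,|t|^{-\alpha}\|\square_k g\|_{L^{p'}}.
\]
Now I multiply both sides by $(1+|k|)^{s}$, raise to the power $q$, and sum over $k\in\mathbb{Z}^n$ (for $q=\infty$, take the supremum instead), and take the $q$-th root. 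The constant $c|t|^{-\alpha}$ factors out of the sum since it is independent of $k$, yielding
\[
\Big(\sum_{k\in\mathbb{Z}^n}(1+|k|)^{sq}\|\square_k B_i(t)g\|_{L^p}^q\Big)^{1/q}\leq c\,|t|^{-\alpha}\Big(\sum_{k\in\mathbb{Z}^n}(1+|k|)^{sq}\|\square_k g\|_{L^{p'}}^q\Big)^{1/q},
\]
which is exactly $\|B_i(t)g\|_{M^s_{p,q}}\leq c|t|^{-\alpha}\|g\|_{M^s_{p',q}}$. The case $q=\infty$ is handled identically with $\sup_k$ in place of the $\ell^q$ sum.

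I do not expect any serious obstacle here; the proof is essentially a one-line transfer principle, because $\square_k$ is a Fourier multiplier and Lemma \ref{LinSciFel} is uniform in the input function. The only point requiring a modicum of care is the justification that $\square_k g$ lies in $L^{p'}$ so that Lemma \ref{LinSciFel} literally applies to each dyadic-type piece, and that the resulting bound is the genuinely sharp one with the same decay exponent $\alpha$ and no loss in $k$; this is immediate because the estimate in Lemma \ref{LinSciFel} has a $k$-independent constant and $\square_k$ is a bounded operator on the Fourier side with symbol supported in a fixed-size cube. For a distribution $g\in M^s_{p',q}$ one has $\|\square_k g\|_{L^{p'}}<\infty$ for each $k$ by definition of the norm, which is all that is needed. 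Thus the lemma follows by applying Lemma \ref{LinSciFel} blockwise and reassembling the modulation norm.
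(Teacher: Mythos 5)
Your proof is correct and follows essentially the same route as the paper: commute $\square_k$ with the Fourier multiplier $B_i(t)$, apply the $L^{p'}\to L^p$ decay estimate of Lemma \ref{LinSciFel} to each piece $\square_k g$ (which lies in $L^{p'}$ by definition of the $M^s_{p',q}$ norm), and then take the weighted $\ell^q$ norm over $k$. The paper's own write-up inserts an extra Hausdorff--Young detour before invoking Lemma \ref{LinSciFel}, but your direct blockwise application is cleaner and loses nothing.
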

\proof By definition of $\square_k,$ the Hausdorff-Young inequality and Lemma \ref{LinSciFel}, we have
\begin{align*}
\|\square_k B_i(t)g\|_p&\leq c \|\sigma_k(\xi) \mathcal{F}B_i(t)(\xi) \mathcal{F}g(\xi)\|_{p'}=c \|\mathcal{F}B_i(t)\square_kg\|_{p'} \\
& \leq c \|B_i(t)\square_kg\|_{p}\leq c |t|^{-\alpha}  \|\square_kg\|_{p'}.
\end{align*}
Multiplying the last inequality by $(1+\vert k\vert)^s$ and taking the $l^q$-norm we arrive at the desired result. 
\endproof

\begin{lemm}\label{LinB1B3}
Let $2\leq p \leq \infty,$  $i=1,3$ and $s\in \mathbb{R}.$ Then, for all $g\in M_{p',q}^s $ it holds
\begin{align} 
\|B_i(t)g\|_{D^{-1}JM_{p,q}^s}\leq c |t|^{-\alpha}\|g\|_{M_{p',q}^s}\label{LinB1B3es}
\end{align}
with $\frac 1p+\frac{1}{p'}=1$ and $\alpha= n\left(\frac 12-\frac 1p\right).$
\end{lemm}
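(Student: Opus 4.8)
The plan is to reduce Lemma \ref{LinB1B3} to the already-established estimates in Lemma \ref{LinSciFel} by unwinding the definition of the $D^{-1}JM^s_{p,q}$ norm. By definition, $\Vert B_i(t)g\Vert_{D^{-1}JM_{p,q}^s} = \Vert J^{-1}D B_i(t)g\Vert_{M^s_{p,q}}$, so the first step is to identify the symbol of the operator $J^{-1}D B_i(t)$ for $i=1$ and $i=3$. For $i=3$, recall that $B_3(t)$ has symbol $|\xi|^{-1}\langle\xi\rangle\sin(t|\xi|\langle\xi\rangle)$, hence $J^{-1}D B_3(t)$ has symbol $\langle\xi\rangle^{-1}|\xi|\cdot|\xi|^{-1}\langle\xi\rangle\sin(t|\xi|\langle\xi\rangle) = \sin(t|\xi|\langle\xi\rangle)$; that is, $J^{-1}DB_3(t) = l_2(t)$, precisely the operator introduced just before Lemma \ref{LinSciFel}. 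For $i=1$, the operator $J^{-1}D B_1(t)$ has symbol $\langle\xi\rangle^{-1}|\xi|\cos(t|\xi|\langle\xi\rangle)$, which I would recognize as $-B_2(t)$ composed with nothing extra — more precisely, $B_2(t)$ has symbol $-|\xi|\langle\xi\rangle^{-1}\sin(t|\xi|\langle\xi\rangle)$, so the symbol of $J^{-1}DB_1(t)$ is $|\xi|\langle\xi\rangle^{-1}\cos(t|\xi|\langle\xi\rangle)$, a bounded-Fourier-multiplier variant of $B_2$ with $\sin$ replaced by $\cos$. Either way, the relevant estimate $\Vert \cdot \Vert_{L^p}\leq c|t|^{-\alpha}\Vert\cdot\Vert_{L^{p'}}$ for these operators follows from Theorem 2.1 in Linares and Scialom \cite{LinaresScialom} exactly as for $B_1,B_2,l_2$ (the proof there only uses the structure of the oscillatory phase $t|\xi|\langle\xi\rangle$ together with symbol bounds, which hold equally for $|\xi|\langle\xi\rangle^{-1}\cos(t|\xi|\langle\xi\rangle)$).

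Once the relevant operator — call it $T_i(t) := J^{-1}DB_i(t)$ — is seen to satisfy a dispersive bound $\Vert T_i(t)h\Vert_{L^p}\leq c|t|^{-\alpha}\Vert h\Vert_{L^{p'}}$ on $L^{p'}$, the passage to modulation spaces is identical to the proof of Lemma \ref{LinB1B2}. Namely, I would write $\square_k B_i(t)g$ and compute
\begin{align*}
\Vert \square_k T_i(t) g\Vert_p &\leq c\Vert \sigma_k(\xi)\,\mathcal{F}T_i(t)(\xi)\,\mathcal{F}g(\xi)\Vert_{p'} = c\Vert \mathcal{F}\big(T_i(t)\square_k g\big)\Vert_{p'}\\
&\leq c\Vert T_i(t)\square_k g\Vert_p \leq c|t|^{-\alpha}\Vert \square_k g\Vert_{p'},
\end{align*}
using the Hausdorff–Young inequality in the first and third steps and the dispersive bound in the last. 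Then multiplying by $(1+|k|)^s$, taking $\ell^q$ in $k$, and using $\Vert J^{-1}DB_i(t)g\Vert_{M^s_{p,q}} = \Vert \square_k T_i(t)g\Vert$ in the appropriate mixed norm yields
\[
\Vert B_i(t)g\Vert_{D^{-1}JM^s_{p,q}} = \Vert T_i(t)g\Vert_{M^s_{p,q}} \leq c|t|^{-\alpha}\Vert g\Vert_{M^s_{p',q}},
\]
which is the claimed estimate \eqref{LinB1B3es}.

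The only genuinely non-routine point — and the step I expect to require the most care — is verifying that the dispersive $L^{p'}\to L^p$ decay actually holds for the operators $J^{-1}DB_1(t)$ and $J^{-1}DB_3(t)$, rather than merely quoting it. For $B_3$ this is costless since $J^{-1}DB_3(t) = l_2(t)$ is literally covered by Lemma \ref{LinSciFel}. For $B_1$, the operator $J^{-1}DB_1(t)$ has symbol $m(\xi) = |\xi|\langle\xi\rangle^{-1}\cos(t|\xi|\langle\xi\rangle)$; the cleanest route is to observe that $m(\xi)$ differs from the symbol of $B_2(t)$ only by replacing $\sin$ with $\cos$ in the oscillatory factor, and that the stationary-phase/van der Corput analysis underlying Theorem 2.1 of \cite{LinaresScialom} and Lemma 2.1 of \cite{Lucas1} is insensitive to this change (both $\cos$ and $\sin$ of the same phase $t|\xi|\langle\xi\rangle$ give the same $|t|^{-\alpha}$ decay, since $\cos\theta = \sin(\theta + \pi/2)$ only shifts the phase by a constant). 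Alternatively, one can commute: since $J^{-1}D$ is a Fourier multiplier bounded on each $\square_k$ block uniformly in $k$ (its symbol $|\xi|\langle\xi\rangle^{-1}$ and all its derivatives are bounded on $\mathrm{supp}\,\sigma_k$ with constants independent of $k$, by the standard Mikhlin/Bernstein argument on unit cubes), one could also route the proof as $\square_k J^{-1}D B_1(t) g$ and absorb the multiplier into a kernel with $\ell^1$-summable bounds — but the phase-shift observation is shorter. With that bound in hand, the modulation-space argument above closes the proof, and I would remark that the constant $c$ depends only on $n,p,s,q$.
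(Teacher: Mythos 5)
Your proposal is correct, and for $i=3$ it coincides exactly with the paper's proof: both identify $J^{-1}DB_3(t)=l_2(t)$ and then run the $\square_k$/Hausdorff--Young argument of Lemma \ref{LinB1B2} using the dispersive bound for $l_2(t)$ from Lemma \ref{LinSciFel}. The difference is in the $i=1$ case. The paper does not touch the composed symbol $|\xi|\langle\xi\rangle^{-1}\cos(t|\xi|\langle\xi\rangle)$ at all: it simply uses that $J^{-1}D$ is a bounded (time-independent) Fourier multiplier on $M^s_{p,q}$, so that $\|J^{-1}DB_1(t)g\|_{M^s_{p,q}}\leq C\|B_1(t)g\|_{M^s_{p,q}}$, and then quotes the already-proved Lemma \ref{LinB1B2} for $B_1$ --- a two-line reduction. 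Your primary route instead re-enters the oscillatory-integral analysis of Linares--Scialom to establish a fresh $L^{p'}\to L^p$ dispersive bound for the modified symbol; this is valid (writing $\cos$ and $\sin$ as combinations of $e^{\pm it|\xi|\langle\xi\rangle}$ shows the two estimates are genuinely equivalent), but it costs a re-examination of the cited theorem rather than a citation of results already at hand, and it is precisely the step you yourself flag as non-routine. The commuting alternative you mention in passing --- absorbing the bounded multiplier $|\xi|\langle\xi\rangle^{-1}$ blockwise --- is essentially what the paper does, and is the more economical choice; your observation that this multiplier has uniformly $\ell^1$-summable kernel bounds on each $\square_k$ block is in fact a more careful justification of the paper's bare assertion of ``$L^p$-continuity of $J^{-1}D$'', which is worth noting since the lemma allows $p=\infty$, where Mikhlin-type $L^p$ boundedness alone would not suffice.
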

\proof By definition of $D^{-1}JM_{p',q}^s$-norm, we have
\[
 \|B_3(t)g\|_{D^{-1}JM_{p,q}^s}=  \| J^{-1}DB_3(t)g\|_{M_{p,q}^s}=\| l_2(t)g\|_{M_{p,q}^s}. 
\]
Following the proof of Lemma \ref{LinB1B2} and using Lemma \ref{LinSciFel}, we obtain
\[ \| l_2(t)g\|_{M_{p,q}^s} \leq c |t|^{-\alpha}\|g\|_{M_{p',q}^s},\]
which concludes the estimate for $B_3.$ Now, for $i=1,$ from the $L^p$-continuity of $J^{-1}D,$ we arrive at
\[
 \|B_1(t)g\|_{D^{-1}JM_{p,q}^s}=  \| J^{-1}DB_1(t)g\|_{M_{p,q}^s}\leq \| B_1(t)g\|_{M_{p,q}^s} \leq c |t|^{-\alpha}\|g\|_{M_{p',q}^s}.
\]
Thus we conclude the proof of the lemma. 
\endproof

\begin{lemm}\label{LinB1B2(1+t)}
Let $2\leq p < \infty,$  $0<q<\infty,$ $i=1,2$ and $s\in \mathbb{R}.$ Then, for all $g\in M_{p',q}^s $ it holds
\begin{align}
\|B_i(t)g\|_{M_{p,q}^s}\leq c (1+|t|)^{-\alpha}\|g\|_{M_{p',q}^s}\label{LinB1B2(1+t)es}
\end{align}
with $\frac 1p+\frac{1}{p'}=1$ and $\alpha= n\left(\frac 12-\frac 1p\right).$
\end{lemm}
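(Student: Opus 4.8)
The plan is to treat the ranges $|t|\ge 1$ and $0<|t|\le 1$ separately; the second is the only substantive case, since the dispersive bound of Lemma~\ref{LinSciFel} (hence Lemma~\ref{LinB1B2}) degenerates as $t\to 0$. For $|t|\ge 1$ the estimate is immediate from Lemma~\ref{LinB1B2}: since $1+|t|\le 2|t|$ one has $|t|^{-\alpha}\le 2^{\alpha}(1+|t|)^{-\alpha}$, so $\|B_i(t)g\|_{M^s_{p,q}}\le c\,|t|^{-\alpha}\|g\|_{M^s_{p',q}}\le c\,2^{\alpha}(1+|t|)^{-\alpha}\|g\|_{M^s_{p',q}}$.

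For the range $|t|\le 1$ I would exploit that the Fourier symbols $m_i(t,\cdot)$ of $B_i(t)$ are bounded by $1$ uniformly in $t$: indeed $|m_1(t,\xi)|=|\cos(t|\xi|\langle\xi\rangle)|\le 1$, while $|m_2(t,\xi)|=|\xi|\langle\xi\rangle^{-1}|\sin(t|\xi|\langle\xi\rangle)|\le|\xi|\langle\xi\rangle^{-1}\le 1$. Applying Plancherel on each frequency block then gives $\|\square_kB_i(t)g\|_{L^2}=\|\sigma_k(\xi)m_i(t,\xi)\widehat g(\xi)\|_{L^2}\le\|\sigma_k\widehat g\|_{L^2}=\|\square_kg\|_{L^2}$; multiplying by $(1+|k|)^s$ and taking the $\ell^q$-norm in $k$ yields the $t$-uniform bound $\|B_i(t)g\|_{M^s_{2,q}}\le\|g\|_{M^s_{2,q}}$. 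Now I would use the embeddings in item~(i) of the introduction: since $p\ge 2$ we have $M^s_{2,q}\subset M^s_{p,q}$, and since the conjugate exponent satisfies $p'\le 2$ we have $M^s_{p',q}\subset M^s_{2,q}$; chaining these with the previous line gives
\[
\|B_i(t)g\|_{M^s_{p,q}}\le c\,\|B_i(t)g\|_{M^s_{2,q}}\le c\,\|g\|_{M^s_{2,q}}\le c\,\|g\|_{M^s_{p',q}},
\]
valid for every $t$. (One may instead avoid the embeddings and bound each block directly by Hausdorff--Young, H\"older's inequality on the support of $\sigma_k$ --- a set of measure bounded uniformly in $k$ --- and the Bernstein-type inequality $\|\square_kg\|_{L^2}\le c\|\square_kg\|_{L^{p'}}$, again uniformly in $k$.)

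Finally I would combine the two pieces. For $|t|\le 1$ one has $(1+|t|)^{-\alpha}\ge 2^{-\alpha}$, so the $t$-uniform bound just obtained gives $\|B_i(t)g\|_{M^s_{p,q}}\le c\,\|g\|_{M^s_{p',q}}\le c\,2^{\alpha}(1+|t|)^{-\alpha}\|g\|_{M^s_{p',q}}$; for $|t|\ge 1$ we use the first paragraph. In either case $\|B_i(t)g\|_{M^s_{p,q}}\le c\,(1+|t|)^{-\alpha}\|g\|_{M^s_{p',q}}$, which is the claim. I anticipate no real obstacle here: the only thing to notice is that the singularity of the dispersive estimate at $t=0$ is harmless, because on each uniform frequency block the trivial $L^2\to L^2$ bound coming from $|m_i|\le 1$, together with the modulation-space embeddings $M^s_{p',q}\subset M^s_{2,q}\subset M^s_{p,q}$, already absorbs it.
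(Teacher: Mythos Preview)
Your proof is correct and follows the same strategy as the paper: split into $|t|\le 1$ and $|t|>1$, use Lemma~\ref{LinB1B2} for the latter, and exploit the uniform bound $|m_i(t,\xi)|\le 1$ for the former. The only difference is in executing the small-$t$ estimate: the paper works block by block, using Hausdorff--Young and H\"older on the compact support of $\sigma_k$ to get $\|\square_k B_i(t)g\|_{L^p}\le c\sum_{l\in\Lambda}\|\square_{k+l}g\|_{L^{p'}}$ directly (precisely the alternative you sketch in parentheses), whereas your main route passes through $L^2$ via Plancherel and then invokes the embeddings $M^s_{p',q}\subset M^s_{2,q}\subset M^s_{p,q}$; both are equally valid.
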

\proof From Lemma \ref{LinSciFel} and since $\square_k$ and $B_i(t)$ commmute, we have
\begin{equation}\label{B12k+l1}
 \|\square_k B_i(t)g\|_{L^p}\leq c |t|^{-\alpha}\|\square_k g\|_{L^{p'}}\leq c  |t|^{-\alpha} \sum_{l\in \Lambda} \|\square_{k+l} g\|_{L^{p'}},
 \end{equation}
where $\Lambda=\{ l\in \mathbb{Z}^n: B(l,\sqrt{2n})\cap B(0,\sqrt{2n})\neq \emptyset\}.$ We also used that
\[ \|\square_k f\|_{L^{p_2}}\leq c  \sum_{l\in \Lambda} \|\square_{k+l} f\|_{L^{p_1}},\]
for all $0<p_1\leq p_2\leq \infty$ (see for instance Baoxiang {\it et al.} \cite{BaoLifBol}).\\

From H\"older's inequality we have that
 \begin{align*}
 \|\square_k B_i(t)g\|_{L^p}&=\|\mathcal{F}^{-1}\sum_{l\in \Lambda} \sigma_k(\xi)\sigma_{k+l}(\xi)\widehat{B_i}(\xi)\widehat{g}(\xi)\|_{L^{p}}\leq c  \sum_{l\in \Lambda} \|\sigma_k(\xi)\sigma_{k+l}(\xi)\widehat{B_i}(\xi)\widehat{g}(\xi)\|_{L^{p'}}\\
 &\leq c \|\sigma_k(\xi)\widehat{g}(\xi)\|_{L^{r'}} \leq c  \|\square_k g\|_{L^r} \leq c  \sum_{l\in \Lambda} \|\square_{k+l} g\|_{L^{p'}},
 \end{align*}
where $1\leq r\leq 2.$ Therefore,
\begin{equation}\label{B12k+l2}
\|\square_k B_i(t)g\|_{L^p} \leq c \sum_{l\in \Lambda} \|\square_{k+l} g\|_{L^{p'}}.
 \end{equation}
From (\ref{B12k+l1}), (\ref{B12k+l2}) and considering the cases $|t|\leq 1$ and $|t|>1,$ we arrive at
\begin{equation}\label{B12(1+t)Cua1}
\|\square_k B_i(t)g\|_{L^p} \leq c (1+|t|)^{-\alpha}\sum_{l\in \Lambda} \|\square_{k+l} g\|_{L^{p'}}.
 \end{equation}
 Multiplying $(1+\vert k\vert)^s$ and then taking the $l^q$-norm in both sides of (\ref{B12(1+t)Cua1}), we immediately obtain the desired result. 
 \endproof
 
 In a similar way as in Lemma \ref{LinB1B2(1+t)} we also obtain. 
 \begin{lemm}\label{LinB1B3(1+t)}
Let $2\leq p < \infty,$  $0<q<\infty,$ $i=1,3$ and $s\in \mathbb{R}.$ Then, for all $g\in M_{p',q}^s $ it holds
\begin{align} 
\|B_i(t)g\|_{D^{-1}JM_{p,q}^s}\leq c (1+|t|)^{-\alpha}\|g\|_{M_{p',q}^s}\label{LinB1B3(1+t)es}
\end{align}
with $\frac 1p+\frac{1}{p'}=1$ and $\alpha= n\left(\frac 12-\frac 1p\right).$
\end{lemm}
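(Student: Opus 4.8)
The plan is to reduce both cases, $i=1$ and $i=3$, to estimates already in hand, mirroring Lemma~\ref{LinB1B3} for the algebraic reductions and Lemma~\ref{LinB1B2(1+t)} for the quantitative core.

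For $i=3$, following Lemma~\ref{LinB1B3} I would first write
\[
\|B_3(t)g\|_{D^{-1}JM_{p,q}^s}=\|J^{-1}DB_3(t)g\|_{M_{p,q}^s}=\|l_2(t)g\|_{M_{p,q}^s},
\]
so the task reduces to showing $\|l_2(t)g\|_{M_{p,q}^s}\le c(1+|t|)^{-\alpha}\|g\|_{M_{p',q}^s}$. Since $l_2(t)$ is a Fourier multiplier it commutes with each $\square_k$, and the second estimate of Lemma~\ref{LinSciFel} gives $\|l_2(t)h\|_{L^p}\le c|t|^{-\alpha}\|h\|_{L^{p'}}$, precisely the input used for $B_i(t)$ in Lemma~\ref{LinB1B2(1+t)}. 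I would therefore repeat that argument verbatim with $B_i(t)$ replaced by $l_2(t)$: combining Lemma~\ref{LinSciFel} with the Bernstein-type inequality $\|\square_k f\|_{L^{p_2}}\le c\sum_{l\in\Lambda}\|\square_{k+l}f\|_{L^{p_1}}$ ($0<p_1\le p_2\le\infty$) gives the analogue of (\ref{B12k+l1}); the H\"older argument on the Fourier side, using only that $|\sin(t|\xi|\langle\xi\rangle)|\le 1$ on the fixed-size set $\mathrm{supp}(\sigma_k\sigma_{k+l})$, gives the $t$-uniform analogue of (\ref{B12k+l2}). Splitting into $|t|\le 1$ and $|t|>1$ yields $\|\square_k l_2(t)g\|_{L^p}\le c(1+|t|)^{-\alpha}\sum_{l\in\Lambda}\|\square_{k+l}g\|_{L^{p'}}$, and then multiplying by $(1+|k|)^s$, taking the $\ell^q$-norm, and absorbing the finite sum over $l\in\Lambda$ (triangle inequality together with $(1+|k|)^s\sim(1+|k+l|)^s$ for $l\in\Lambda$) concludes this case.

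For $i=1$, exactly as in Lemma~\ref{LinB1B3} the $L^p$-continuity of $J^{-1}D$ (equivalently, boundedness of $J^{-1}D$ on $M_{p,q}^s$, its symbol $|\xi|/\langle\xi\rangle$ being bounded with bounded derivatives on each frequency patch) gives
\[
\|B_1(t)g\|_{D^{-1}JM_{p,q}^s}=\|J^{-1}DB_1(t)g\|_{M_{p,q}^s}\le\|B_1(t)g\|_{M_{p,q}^s},
\]
and then Lemma~\ref{LinB1B2(1+t)} yields $\|B_1(t)g\|_{M_{p,q}^s}\le c(1+|t|)^{-\alpha}\|g\|_{M_{p',q}^s}$, as claimed (note $p<\infty$ here matches the hypothesis of Lemma~\ref{LinB1B2(1+t)}).

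The one step deserving care — the point I would single out as the main (mild) obstacle — is the $t$-uniform bound in the $i=3$ case: one cannot handle the multiplier $\sigma_k(\xi)\sigma_{k+l}(\xi)\sin(t|\xi|\langle\xi\rangle)$ by a Mikhlin--H\"ormander estimate uniformly in $t$, since its derivatives grow with $|t|$. As in Lemma~\ref{LinB1B2(1+t)}, the remedy is to stay on the frequency side and exploit only the pointwise bound $|\sin(t|\xi|\langle\xi\rangle)|\le 1$ together with the uniform boundedness of $|\mathrm{supp}(\sigma_k\sigma_{k+l})|$, so that H\"older's inequality produces a constant independent of $t$, $k$ and $l$. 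Everything else is a routine transcription of the proof of Lemma~\ref{LinB1B2(1+t)}.
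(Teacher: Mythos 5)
Your proposal is correct and follows essentially the same route as the paper, which itself only remarks that the lemma is obtained ``in a similar way as in Lemma \ref{LinB1B2(1+t)}'': the reduction $\|B_3(t)g\|_{D^{-1}JM^s_{p,q}}=\|l_2(t)g\|_{M^s_{p,q}}$ and the bound $\|B_1(t)g\|_{D^{-1}JM^s_{p,q}}\le\|B_1(t)g\|_{M^s_{p,q}}$ are exactly those used in Lemma \ref{LinB1B3}, and the $(1+|t|)^{-\alpha}$ estimate for $l_2(t)$ is the verbatim transcription of the proof of Lemma \ref{LinB1B2(1+t)} with the bounded symbol $\sin(t|\xi|\langle\xi\rangle)$ in place of $\widehat{B_i}$. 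Your write-up in fact supplies more detail (the equivalence $(1+|k|)^s\sim(1+|k+l|)^s$ for $l\in\Lambda$, and the reason a Mikhlin-type argument would not be $t$-uniform) than the paper records.
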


\begin{remark}\label{esti_comp}
The right-hand side of (\ref{LinB1B2es})-(\ref{LinB1B3es}) have a singularity at $t=0.$ In order to control the growth of this singularity we assume $\lambda_0(n)<\lambda<\lambda_1(n)$ ($\alpha< 1$) in Remark \ref{global2}. In estimates (\ref{LinB1B2(1+t)es})-(\ref{LinB1B3(1+t)es}) we remove the singularity but preserving the decay at $t=\infty.$
\end{remark} 

 \begin{lemm}\label{Ib1Opcion2}
 Let $1\leq p,p_1,p_2,\sigma,\sigma_1,\sigma_2\leq \infty.$  If $\frac{1}{p}=\frac{1}{p_1}+\frac{1}{p_2},$ $\frac{1}{\sigma}=\frac{1}{\sigma_1}+\frac{1}{\sigma_2}-1$ and  $s\geq 0,$ there exists $C>0,$ such that for any $u\in M^s_{p_1,\sigma_1}(\mathbb{R}^n)$ and $v\in M^s_{p_2,\sigma_2}(\mathbb{R}^n),$ it holds
 \[ \|u v\|_{M^{s}_{p,\sigma}}\leq C  \|u\|_{M^{s}_{p_1,\sigma_1}}\|v\|_{M^{s}_{p_2,\sigma_2}}.\]
 \end{lemm}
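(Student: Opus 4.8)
The plan is to reduce the product estimate to the discrete Young/Hölder inequality on the frequency lattice, using the almost-orthogonality of the frequency-uniform decomposition. First I would expand the product $uv$ in terms of the decomposition operators: since $\sum_{k}\sigma_k\equiv 1$, we have $uv=\sum_{j,l}(\square_j u)(\square_l v)$, and the Fourier support of $(\square_j u)(\square_l v)$ lies in $\mathrm{supp}\,\sigma_j+\mathrm{supp}\,\sigma_l\subset Q_j+Q_l$, which meets $Q_k$ only when $|k-j-l|_\infty\lesssim 1$. Hence $\square_k(uv)=\sum_{|k-j-l|_\infty\le C}\square_k\big((\square_j u)(\square_l v)\big)$, a locally finite sum. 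This is the standard paraproduct-type decomposition in modulation spaces (cf. Wang–Hudzik \cite{BaoxHudz}, Baoxiang \emph{et al.} \cite{BaoLifBol}), and I would cite it rather than reprove the support bookkeeping in full.

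Next, for fixed $k$, I would estimate $\|\square_k(uv)\|_{p}$. Using that $\square_k$ acts boundedly on $L^p$ uniformly in $k$ (the multiplier $\sigma_k$ has bounded derivatives uniformly in $k$) together with Hölder's inequality with $\tfrac1p=\tfrac1{p_1}+\tfrac1{p_2}$, one gets
\[
\|\square_k(uv)\|_{p}\le C\sum_{|k-j-l|_\infty\le C}\|\square_j u\|_{p_1}\,\|\square_l v\|_{p_2}.
\]
Then I would insert the weight: since $s\ge 0$ and $|k|\lesssim |j|+|l|$ on the support of the sum, subadditivity of $t\mapsto(1+t)^s$ up to a constant gives $(1+|k|)^s\le C\big((1+|j|)^s+(1+|l|)^s\big)\le C(1+|j|)^s(1+|l|)^s$. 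Therefore, writing $a_j=(1+|j|)^s\|\square_j u\|_{p_1}$ and $b_l=(1+|l|)^s\|\square_l v\|_{p_2}$,
\[
(1+|k|)^s\|\square_k(uv)\|_{p}\le C\sum_{|k-j-l|_\infty\le C} a_j\,b_l = C\,(a * \tilde b)(k),
\]
where $\tilde b$ is $b$ convolved with the indicator of a fixed finite set. Taking the $\ell^\sigma$-norm over $k$ and applying Young's inequality for sequences with exponents $\tfrac1\sigma+1=\tfrac1{\sigma_1}+\tfrac1{\sigma_2}$ — which is exactly the stated relation $\tfrac1\sigma=\tfrac1{\sigma_1}+\tfrac1{\sigma_2}-1$ — yields $\|(a*\tilde b)\|_{\ell^\sigma}\le C\|a\|_{\ell^{\sigma_1}}\|b\|_{\ell^{\sigma_2}}=C\|u\|_{M^s_{p_1,\sigma_1}}\|v\|_{M^s_{p_2,\sigma_2}}$, which is the claim.

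The main technical point — and the one I would be most careful about — is the bookkeeping that turns the genuinely two-index sum $\sum_{j,l}$ into a convolution in a single variable: one must verify that after restricting to $|k-j-l|_\infty\le C$ the remaining freedom (say in $j$, with $l$ then constrained to a fixed-size box around $k-j$) is a true convolution so that Young's inequality applies with the right exponents; this is where the condition $\tfrac1\sigma=\tfrac1{\sigma_1}+\tfrac1{\sigma_2}-1$ is used and is sharp. The hypotheses $p\ge 1$ are needed only so that $\square_k$ is $L^p$-bounded and Hölder applies; the case $\sigma_1$ or $\sigma_2$ equal to $\infty$ (or $\sigma=\infty$) is handled by the usual endpoint form of Young's inequality. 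I expect no real obstacle beyond this careful indexing, since $s\ge 0$ makes the weight estimate one-sided and harmless.
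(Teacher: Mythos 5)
Your argument is correct, and it proves the lemma by a genuinely different route than the paper. The paper's proof is a two-line reduction: it quotes Remark~2.4 of Iwabuchi \cite{Iwabuchi}, which gives the Leibniz-type bilinear bound
\[
\|uv\|_{M^{s}_{p,\sigma}}\le C\bigl(\|u\|_{M^{s}_{p_1,\sigma_1}}\|v\|_{M^{0}_{p_2,\sigma_2}}+\|u\|_{M^{0}_{p_1,\sigma_1}}\|v\|_{M^{s}_{p_2,\sigma_2}}\bigr),
\]
and then absorbs the $M^0$ factors using the embedding $M^{s}_{p_i,\sigma_i}\hookrightarrow M^{0}_{p_i,\sigma_i}$ for $s\ge 0$ (Proposition~2.1(ii) of \cite{Iwabuchi}). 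You instead give a self-contained first-principles proof: the paraproduct-type decomposition $\square_k(uv)=\sum_{|k-j-l|_\infty\le C}\square_k\bigl((\square_j u)(\square_l v)\bigr)$, the uniform $L^p$-boundedness of $\square_k$ plus H\"older with $\tfrac1p=\tfrac1{p_1}+\tfrac1{p_2}$, the weight splitting $(1+|k|)^s\le C(1+|j|)^s(1+|l|)^s$ (valid precisely because $s\ge 0$), and discrete Young's inequality with $\tfrac1\sigma=\tfrac1{\sigma_1}+\tfrac1{\sigma_2}-1$. All of these steps are sound, including the reduction of the constrained double sum to a finite union of genuine convolutions and the endpoint cases $p,\sigma,\sigma_i=\infty$; your product bound on the weights is slightly lossier than the sum bound $(1+|k|)^s\lesssim(1+|j|)^s+(1+|l|)^s$ underlying Iwabuchi's two-term estimate, but it is exactly what the symmetric statement here requires. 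The trade-off is the usual one: the paper's derivation is shorter and rests on a sharper imported estimate, while yours is elementary, avoids the external reference, and makes visible where each hypothesis is used.
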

 \proof From Remark 2.4 in Iwabuchi \cite{Iwabuchi}, we have
  \[ \|u v\|_{M^{s}_{p,\sigma}}\leq C  \|u\|_{M^{s}_{p_1,\sigma_1}}\|v\|_{M^{0}_{p_2,\sigma_2}}+\|u\|_{M^{0}_{p_3,\sigma_3}}\|v\|_{M^{s}_{p_4,\sigma_4}},\]
where 
\[\frac{1}{p}=\frac{1}{p_1}+\frac{1}{p_2}=\frac{1}{p_3}+\frac{1}{p_4} \ \ \ \ \text{and}\ \ \ \ \frac{1}{\sigma}=\frac{1}{\sigma_1}+\frac{1}{\sigma_2}-1=\frac{1}{\sigma_3}+\frac{1}{\sigma_4}-1.\]
Taking $p_1=p_3,$ $p_2=p_4,$ $\sigma_1=\sigma_3$ and $\sigma_2=\sigma_4,$ we arrive at
 \begin{equation}\label{Mult1}
 \|u v\|_{M^{s}_{p,\sigma}}\leq C  \|u\|_{M^{s}_{p_1,\sigma_1}}\|v\|_{M^{0}_{p_2,\sigma_2}}+\|u\|_{M^{0}_{p_1,\sigma_1}}\|v\|_{M^{s}_{p_2,\sigma_2}},
 \end{equation}
where 
\[\frac{1}{p}=\frac{1}{p_1}+\frac{1}{p_2}\ \ \ \ \text{and}\ \ \ \ \frac{1}{\sigma}=\frac{1}{\sigma_1}+\frac{1}{\sigma_2}-1.\]
Using Proposition 2.1$(ii)$ in \cite{Iwabuchi}, we obtain
\begin{equation}\label{Mult2}
\|v\|_{M^{0}_{p_2,\sigma_2}} \apprle \|v\|_{M^{s}_{p_2,\sigma_2}}\ \  \ \text{and} \ \ \ \|u\|_{M^{0}_{p_1,\sigma_1}}\apprle \|u\|_{M^{s}_{p_1,\sigma_1}}.
\end{equation}
Combining (\ref{Mult1}) with (\ref{Mult2}) we obtain the desire result. 
 \endproof
 
 With the aim of making the reading easier, we present three lemmas which allow us to deal with the nonlinearity $f(u)=u^{\lambda}$. The proof of the first two can be found in Iwabuchi \cite{Iwabuchi} (Proposition 2.7 (ii) and Corollary 2.9 (ii)) and the proof of the third one is in  Baoxiang and Hudzik \cite{BaoxHudz} (Lemma 8.2).
 \begin{lemm} \label{Ib1}\cite{Iwabuchi}
 Let $1\leq p,p_1,p_2\leq \infty,$ $1<\sigma,\sigma_1,\sigma_2<\infty.$   If $\frac{1}{p}=\frac{1}{p_1}+\frac{1}{p_2},$ $\frac{1}{\sigma}-\frac{1}{\sigma_1}-\frac{1}{\sigma_2}+1\leq \frac{s}{n}<\frac{1}{\sigma},$ there exists $C>0,$ such that for any $u\in M^s_{p_1,\sigma_1}(\mathbb{R}^n)$ and $v\in M^s_{p_2,\sigma_2}(\mathbb{R}^n),$ it holds
 \[ \|u v\|_{M^{s}_{p,\sigma}}\leq C  \|u\|_{M^{s}_{p_1,\sigma_1}}\|v\|_{M^{s}_{p_2,\sigma_2}}.\]
 \end{lemm}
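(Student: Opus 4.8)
The plan is to reduce the product estimate to a discrete convolution inequality for the sequences of $L^{p_i}$-norms of the frequency pieces $\square_k u$ and $\square_k v$, exploiting the almost disjointness in frequency of the uniform decomposition. First I would expand $uv=\sum_{k,m\in\mathbb{Z}^n}(\square_k u)(\square_m v)$ and note that, since $\mbox{supp}\,\sigma_k\subset\{|\xi-k|_\infty\le 1\}$, the spectrum of $(\square_k u)(\square_m v)$ lies in a ball of fixed radius about $k+m$; hence $\square_j[(\square_k u)(\square_m v)]\equiv 0$ unless $|k+m-j|_\infty\le c_n$, and therefore
\[
\square_j(uv)=\sum_{|l|_\infty\le c_n}\ \sum_{k\in\mathbb{Z}^n}\square_j\bigl[(\square_k u)(\square_{j-k+l}v)\bigr].
\]
Using the uniform bound $\|\square_j h\|_p\le \|\mathcal{F}^{-1}\sigma\|_{1}\|h\|_p$ (valid for every $1\le p\le\infty$ because $\sigma_j$ is a modulation of the fixed Schwartz function $\sigma$) together with H\"older's inequality with $\tfrac1p=\tfrac1{p_1}+\tfrac1{p_2}$, this gives
\[
\|\square_j(uv)\|_p\ \lesssim\ \sum_{|l|_\infty\le c_n}\ \sum_{k\in\mathbb{Z}^n}\|\square_k u\|_{p_1}\,\|\square_{j-k+l}v\|_{p_2}.
\]

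Next I would bring in the derivative weight (one may assume $s\ge 0$, which is the case in all the applications). Since $|l|_\infty\le c_n$ one has $\langle j\rangle^{s}\lesssim\langle k\rangle^{s}+\langle j-k+l\rangle^{s}$, and so
\[
\langle j\rangle^{s}\|\square_j(uv)\|_p\ \lesssim\ \sum_{|l|_\infty\le c_n}\bigl[(a\ast\beta^{(l)})_j+(\alpha^{(l)}\ast b)_j\bigr],
\]
where $\ast$ denotes convolution on $\mathbb{Z}^n$, $a_k=\langle k\rangle^s\|\square_k u\|_{p_1}$, $b_k=\langle k\rangle^s\|\square_k v\|_{p_2}$, and $\alpha^{(l)},\beta^{(l)}$ are the $l$-translates of $\{\|\square_k u\|_{p_1}\}_k$ and $\{\|\square_k v\|_{p_2}\}_k$. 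Since $\|a\|_{l^{\sigma_1}}=\|u\|_{M^s_{p_1,\sigma_1}}$, $\|b\|_{l^{\sigma_2}}=\|v\|_{M^s_{p_2,\sigma_2}}$ and the finitely many translates $l$ only cost a fixed constant, taking the $l^{\sigma}$-norm in $j$ reduces the lemma to the sequence estimate
\[
\|a\ast\beta\|_{l^{\sigma}}\ \lesssim\ \|a\|_{l^{\sigma_1}}\,\|v\|_{M^s_{p_2,\sigma_2}},\qquad \beta=\{\|\square_k v\|_{p_2}\}_k,
\]
together with its symmetric counterpart (with the weight carried by $v$).

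To prove this I would write $\beta_k=\langle k\rangle^{-s}\cdot\bigl(\langle k\rangle^{s}\|\square_k v\|_{p_2}\bigr)$ and combine H\"older on $\mathbb{Z}^n$ — using that $\langle\cdot\rangle^{-s}\in l^{\tau}$ as soon as $\tfrac1\tau<\tfrac sn$ — with Young's inequality for sequences, $\|a\ast\beta\|_{l^{\sigma}}\le\|a\|_{l^{\sigma_1}}\|\beta\|_{l^{\gamma}}$, where $1+\tfrac1\sigma=\tfrac1{\sigma_1}+\tfrac1\gamma$ and $\tfrac1\gamma=\tfrac1\tau+\tfrac1{\sigma_2}$. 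Eliminating $\tau$ and $\gamma$, these two steps go through precisely under the first hypothesis $1+\tfrac1\sigma-\tfrac1{\sigma_1}-\tfrac1{\sigma_2}\le\tfrac sn$ (which also forces $\tau\ge 1$), while the second hypothesis $\tfrac sn<\tfrac1\sigma$ delimits the range in which this Sobolev-type product estimate is the sharp one — for $\tfrac sn\ge\tfrac1\sigma$ one uses instead an algebra-type bound such as Lemma~\ref{Ib1Opcion2}. In the diagonal case $\sigma=\sigma_1=\sigma_2=q$, the only one needed in this paper, the hypothesis becomes $n-\tfrac nq\le s<\tfrac nq$ (which forces $q<2$, consistent with Theorem~\ref{global1}); there the exponents produced above automatically lie in the admissible range and the argument closes cleanly, and the general case is Proposition~2.7(ii) of Iwabuchi \cite{Iwabuchi}.

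I expect the technical heart to be the borderline case $1+\tfrac1\sigma-\tfrac1{\sigma_1}-\tfrac1{\sigma_2}=\tfrac sn$ (e.g.\ $s=n-\tfrac nq$ in the diagonal case), where $\langle\cdot\rangle^{-s}$ just fails to belong to $l^{n/s}$, lying only in the weak space $l^{n/s,\infty}$. There the H\"older and Young steps above must be performed in Lorentz sequence spaces — using $l^{n/s,\infty}\cdot l^{\sigma_2}\hookrightarrow l^{\gamma,\sigma_2}$ together with a Lorentz-refined Young inequality — which still returns the $l^{\sigma}$-norm on the left and the plain norms $\|u\|_{M^s_{p_1,\sigma_1}}$ and $\|v\|_{M^s_{p_2,\sigma_2}}$ on the right. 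After that it only remains to sum over the finitely many translates $l$ and over the two symmetric contributions.
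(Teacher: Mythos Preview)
The paper does not prove this lemma; it is quoted from Iwabuchi \cite{Iwabuchi}, Proposition~2.7(ii), with only a citation (``The proof of the first two can be found in Iwabuchi \cite{Iwabuchi}''). So there is nothing in the present paper to compare your argument against beyond that reference.

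Your sketch is the standard route and, as far as one can tell, essentially the one in \cite{Iwabuchi}: reduce to a discrete convolution on $\mathbb{Z}^n$ via the support properties of the $\square_k$, distribute the weight $\langle j\rangle^s$ by Peetre's inequality, and close with Young plus H\"older against $\langle\cdot\rangle^{-s}\in l^{\tau}$. In the diagonal case $\sigma=\sigma_1=\sigma_2=q$ --- the only instance this paper ever invokes --- the auxiliary exponents land on the nose ($\gamma=1$, $\tau=q'$) and your argument is complete for the open range $n-\tfrac{n}{q}<s<\tfrac{n}{q}$.

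One small caveat on the borderline $s=n-\tfrac{n}{q}$ (which Theorems~\ref{global1} and~\ref{local1} do allow). You correctly identify this as the technical heart, but the specific Lorentz shortcut you propose does not close as written: after the H\"older step one lands in $l^{1,q}$ with $q>1$, and O'Neil's convolution inequality requires the first Lorentz indices to lie strictly between $1$ and $\infty$, so ``$l^{q}\ast l^{1,q}\hookrightarrow l^{q}$'' is not available. At that single point a different device is needed (this is exactly where \cite{Iwabuchi} does extra work), so for the endpoint you should appeal to the cited proof rather than the Lorentz--Young step. Apart from this, your proposal is sound and matches the source the paper relies on.
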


\begin{lemm}\label{Ib2}\cite{Iwabuchi}
Let $1\leq q \leq \infty, $ $p \in \mathbb{N},$ $0\leq  s <n/{\nu},$ and $1\leq \mu, \nu <\infty$ satisfy
\[ \frac{1}{\nu}-\frac{(p-1)s}{n}\leq \frac{p}{\mu}-p+1, \ \ \ \ \ 1\leq \nu\leq \mu.\]
Then, there exists $C>0$ such that for any $u\in M^{s}_{{pq},\mu}(\mathbb{R}^n),$ we have
\[ \|u^p\|_{M^{s}_{{q},\mu}}\leq C  \|u\|^p_{M^{s}_{{pq},\nu}}.\]
\end{lemm}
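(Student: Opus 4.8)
The plan is to prove the estimate by induction on the power $p\in\mathbb{N}$, following Iwabuchi \cite{Iwabuchi}. The base case $p=1$ is immediate: the conditions $1\le\nu\le\mu$ together with $\frac{1}{\nu}\le\frac{1}{\mu}$ (the $p=1$ instance of the stated inequality) force $\nu=\mu$, so the asserted bound is an equality. For the inductive step, assuming the claim at level $p-1$, I would write $u^{p}=u\cdot u^{p-1}$ and apply a bilinear product estimate in modulation spaces with the Lebesgue exponents split as $\frac{1}{q}=\frac{1}{pq}+\frac{p-1}{pq}$. This splitting is the only workable one: it keeps the single factor $u$ in a space with first index $pq$, and it puts $u^{p-1}$ in $M^{s}_{\,pq/(p-1),\,\cdot}$, which is exactly the space covered by the inductive hypothesis at level $p-1$, since $(p-1)\cdot\frac{pq}{p-1}=pq$.

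For the bilinear step I would use the tame product inequality underlying Lemma \ref{Ib1Opcion2} (available because $s\ge0$; this is estimate (\ref{Mult1}) above, from Remark 2.4 of \cite{Iwabuchi}),
\[
\|u\,u^{p-1}\|_{M^{s}_{q,\mu}}\ \lesssim\ \|u\|_{M^{s}_{pq,\mu_1}}\,\|u^{p-1}\|_{M^{0}_{\,pq/(p-1),\,\mu_2}}+\|u\|_{M^{0}_{pq,\mu_1}}\,\|u^{p-1}\|_{M^{s}_{\,pq/(p-1),\,\mu_2}},
\]
with $\frac{1}{\mu}=\frac{1}{\mu_1}+\frac{1}{\mu_2}-1$, replacing it by Lemma \ref{Ib1} in the regimes where the Young relation for the summation index is too lossy and the smoothness of $u$ must instead be spent through the slack $\le s/n$ in the hypothesis of that lemma. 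The $M^{s}$-norm of $u^{p-1}$ is then handled by the inductive hypothesis, yielding $\|u\|^{p-1}_{M^{s}_{pq,\nu}}$; the $M^{0}$-norm of $u^{p-1}$ is handled by iterating the $s=0$ version of the product estimate and then using $\|v\|_{M^{0}_{pq,\rho}}\lesssim\|v\|_{M^{s}_{pq,\rho}}$ (Proposition 2.1(ii) of \cite{Iwabuchi}); and the lone factor $\|u\|_{M^{0}_{pq,\mu_1}}$ or $\|u\|_{M^{s}_{pq,\mu_1}}$ is absorbed into $\|u\|_{M^{s}_{pq,\nu}}$ by the monotonicity embedding $M^{s}_{pq,\nu}\hookrightarrow M^{s}_{pq,\mu_1}$, which is available once $\nu\le\mu_1$.

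The hard part will be the index bookkeeping: one must choose the auxiliary summation indices $\mu_1,\mu_2$ at the top level — and the analogous parameters generated at every stage of the induction — so that all the constraints hold simultaneously, namely the hypothesis of the bilinear estimate tying $\mu_1,\mu_2$ to $\mu$ and $s$, the index inequality demanded by the inductive hypothesis at level $p-1$, the restriction $s<n/\nu$, and the monotonicity conditions $\nu\le\mu_1,\mu_2$. I expect this to reduce precisely to the single arithmetic inequality $\frac{1}{\nu}-\frac{(p-1)s}{n}\le\frac{p}{\mu}-p+1$ together with $\nu\le\mu$, propagated along the induction; verifying that the admissible window for the auxiliary indices is non-empty at each step — including the degenerate cases where a summation index reaches $1$ or $\infty$, where one must switch between Lemma \ref{Ib1} and Lemma \ref{Ib1Opcion2} — is exactly where the delicate work sits. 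Since this is the content of Proposition 2.7(ii) and Corollary 2.9(ii) of Iwabuchi \cite{Iwabuchi}, the cleanest route in the paper is simply to invoke that reference.
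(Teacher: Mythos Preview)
The paper does not prove this lemma at all: immediately before stating it, the authors write that ``the proof of the first two can be found in Iwabuchi \cite{Iwabuchi} (Proposition 2.7 (ii) and Corollary 2.9 (ii)),'' and no argument is given. Your final sentence --- that the cleanest route is simply to invoke that reference --- is therefore exactly what the paper does, and your inductive sketch (splitting $u^p=u\cdot u^{p-1}$, using the bilinear estimate with $\tfrac{1}{q}=\tfrac{1}{pq}+\tfrac{p-1}{pq}$, and tracking the summation indices through the hypothesis) is a faithful outline of how Iwabuchi's Corollary 2.9 is obtained from his Proposition 2.7, so there is nothing further to compare.
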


\begin{lemm} \label{BaoHud}\cite{BaoxHudz}
 Let $1\leq p,p_i,\gamma, \gamma_i\leq \infty,$ satisfy
 \[\frac{1}{p'}=\frac{1}{p_1}+\frac{1}{p_2}+\cdots +\frac{1}{p_m}, \ \ \ \ \frac{1}{\gamma'}=\frac{1}{\gamma_1}+\frac{1}{\gamma_2}+\cdots +\frac{1}{\gamma_m}.\]
 Let $\alpha=0$ and $q=1,$ or $\alpha>0$ and $q'\alpha>nm.$ Then we have
 
 \[ \|u_1u_2\cdots u_m\|_{l_{\square}^{-\alpha,q}(L^{\gamma'}(\mathbb{R}; L^{p'}))}\apprle \prod_{i=1}^m\|u_i\|_{l_{\square}^q(L^{\gamma_i}(\mathbb{R}; L^{p_i}))}.\]
 \end{lemm}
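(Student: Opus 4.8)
The plan is to prove the estimate by frequency localization, reducing the space--time bound to a purely combinatorial estimate on weighted sequences, and then closing the latter by a discrete multilinear Young inequality. First I would expand each factor as $u_i=\sum_{k_i}\square_{k_i}u_i$ and apply $\square_k$ to the product. Because $\mathrm{supp}\,\sigma_{k_i}\subset\{|\xi-k_i|_\infty\le 1\}$, the Fourier support of $\square_{k_1}u_1\cdots\square_{k_m}u_m$ lies in $\{|\xi-(k_1+\cdots+k_m)|_\infty\le m\}$, so $\square_k(\square_{k_1}u_1\cdots\square_{k_m}u_m)$ vanishes unless $|k_1+\cdots+k_m-k|_\infty\le m+1$. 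Hence
\[
\square_k(u_1\cdots u_m)=\sum_{\substack{k_1,\ldots,k_m\\ |k_1+\cdots+k_m-k|_\infty\le m+1}}\square_k(\square_{k_1}u_1\cdots\square_{k_m}u_m),
\]
a sum in which the total frequency $k_1+\cdots+k_m$ is pinned to a bounded neighbourhood of $k$.

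Next I would estimate each block. Using the uniform boundedness of $\square_k$ on $L^{p'}$, H\"older in space with $\frac{1}{p'}=\sum_i\frac{1}{p_i}$, and H\"older in time with $\frac{1}{\gamma'}=\sum_i\frac{1}{\gamma_i}$, I obtain $\|\square_k(\square_{k_1}u_1\cdots\square_{k_m}u_m)\|_{L^{\gamma'}(L^{p'})}\lesssim\prod_{i=1}^m\|\square_{k_i}u_i\|_{L^{\gamma_i}(L^{p_i})}$. Writing $a_{i,\kappa}:=\|\square_\kappa u_i\|_{L^{\gamma_i}(L^{p_i})}$, the triangle inequality then yields $\|\square_k(u_1\cdots u_m)\|_{L^{\gamma'}(L^{p'})}\lesssim B_k$, where $B_k:=\sum_{|k_1+\cdots+k_m-k|_\infty\le m+1}\prod_i a_{i,k_i}$ is a thickened $m$-fold convolution of the sequences $a_i\in l^q(\mathbb{Z}^n)$. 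It thus remains to prove the weighted sequence inequality $\big\|(1+|k|)^{-\alpha}B_k\big\|_{l^q_k}\lesssim\prod_i\|a_i\|_{l^q}$.

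For the endpoint $\alpha=0$, $q=1$ this is immediate: summing $B_k$ over $k$ and using that each tuple $(k_1,\ldots,k_m)$ is counted a bounded number of times gives $\sum_k B_k\lesssim\sum_{k_1,\ldots,k_m}\prod_i a_{i,k_i}=\prod_i\|a_i\|_{l^1}$. For $\alpha>0$ I would argue by duality, writing $\|(1+|\cdot|)^{-\alpha}B\|_{l^q}=\sup_{\|c\|_{l^{q'}}\le 1,\,c\ge 0}\sum_k c_k(1+|k|)^{-\alpha}B_k$ and, after absorbing the thickening into a constant, estimating the resulting $(m+1)$-linear zero-sum form
\[
\sum_{k_1,\ldots,k_m}d_{k_1+\cdots+k_m}\,\prod_{i=1}^m a_{i,k_i},\qquad d_\kappa:=c_\kappa(1+|\kappa|)^{-\alpha},
\]
by the discrete multilinear Young inequality $\sum_{\kappa_0+\cdots+\kappa_m=0}\prod_{j=0}^m f_j(\kappa_j)\le\prod_j\|f_j\|_{l^{\rho_j}}$, valid when $\sum_j\rho_j^{-1}=m$. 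Assigning $l^q$ to each $a_i$ forces $d\in l^{q'/m}$; H\"older then peels off $c$ in $l^{q'}$ and leaves the weight $(1+|\cdot|)^{-\alpha}$ in $l^{q'/(m-1)}$, which is finite precisely when $q'\alpha>n(m-1)$, a condition guaranteed by the hypothesis $q'\alpha>nm$. Undoing the reductions gives the claimed bound.

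The step I expect to be the main obstacle is this weighted summation for $\alpha>0$. The naive route---bounding $B_k$ pointwise by the full $m$-fold convolution $a_1*\cdots*a_m$ and then inserting the weight---fails, since for large $q$ this convolution need not be finite at even a single $k$, and no multiplicative weight can repair a pointwise divergence. The zero-sum/duality formulation above is precisely what circumvents this: it never forms the convolution pointwise but estimates the entire bilinear pairing at once, letting the decaying weight $(1+|\cdot|)^{-\alpha}$---summable exactly because $q'\alpha>nm$---absorb the $m$-fold loss. Care is also needed to verify that the intermediate exponents in the discrete Young inequality remain admissible across the whole range $1\le q<\infty$, and to confirm the bounded-overlap claim for the thickened convolution so that the frequency localization contributes only a constant factor.
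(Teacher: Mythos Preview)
The paper does not give its own proof of this lemma; it is quoted from Wang--Hudzik \cite{BaoxHudz} (their Lemma~8.2), and in the present paper it is only invoked in the endpoint case $\alpha=0$, $q=1$. Your frequency-localization reduction and your treatment of that endpoint are correct and are exactly the standard argument.

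For $\alpha>0$ there is a genuine gap in your duality scheme, and it is precisely the one you flag at the end without resolving. The discrete multilinear Young inequality
\[
\sum_{\kappa_0+\cdots+\kappa_m=0}\prod_{j=0}^m f_j(\kappa_j)\le\prod_{j=0}^m\|f_j\|_{l^{\rho_j}},\qquad \sum_{j=0}^m\frac{1}{\rho_j}=m,
\]
requires $\rho_j\ge 1$ for every $j$; your choice $\rho_1=\cdots=\rho_m=q$ forces $\rho_0=q'/m$, which drops below $1$ as soon as $q>m/(m-1)$, and the inequality then fails. Worse, the weighted sequence estimate you reduce to is itself \emph{false} in that range: with $a_{i,k}=(1+|k|)^{-\beta}$ and $n/q<\beta\le n(m-1)/m$ one has $a_i\in l^q$ while the thickened convolution satisfies $B_k=+\infty$ for every $k$, so no weight $(1+|k|)^{-\alpha}$ can make $\|(1+|\cdot|)^{-\alpha}B\|_{l^q}$ finite. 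Thus the triangle-inequality passage from $\|\square_k(u_1\cdots u_m)\|$ to $B_k$ is already too lossy to cover the full range of $q$. A further symptom is that, where your argument does close, it only consumes the weaker hypothesis $q'\alpha>n(m-1)$ rather than the stated $q'\alpha>nm$; the extra room in the assumption is a hint that a finer argument---one that does not discard all cancellation in the frequency sum---is required for large $q$.
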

 
From Propositions 5.1 and 5.3 in Baoxiang and Hudzik \cite{BaoxHudz} we have the next Strichartz-type estimate in the spaces $l_{\square}^q(L^{\gamma}(\mathbb{R};L^{\sigma}(\mathbb{R}^n))).$ 
\begin{prop} \label{Strichartz1} 
Let $2\leq \sigma<\infty,$ $1\leq q<\infty$ and $\gamma \geq \max\{2, \gamma_{\sigma}\},$ where
\[\frac{2}{\gamma_{\sigma}}=n\left(\frac 12 -\frac{1}{\sigma} \right).\]
Then, for $i=1,2$ we have
\[ \Vert B_i(t)g\Vert_{l_{\square}^q(L^{\gamma}(\mathbb{R};L^{\sigma}))}\apprle \Vert g\Vert_{M_{2,q}}.\]
\[ \Vert l_2(t)g\Vert_{l_{\square}^q(L^{\gamma}(\mathbb{R};L^{\sigma}))}\apprle \Vert g\Vert_{M_{2,q}}.\]
\[ \left\Vert \int_0^tB_i(t-\tau)g(\cdot,\tau)d\tau\right\Vert_{l_{\square}^q(L^{\infty}(\mathbb{R};L^2))}\apprle \Vert g\Vert_{l_{\square}^q(L^{\gamma'}(\mathbb{R};L^{\sigma'}))}.\]
\[ \left\Vert \int_0^tB_i(t-\tau)g(\cdot,\tau)d\tau\right\Vert_{l_{\square}^q(L^{\gamma}(\mathbb{R};L^{\sigma}))}\apprle \Vert g\Vert_{l_{\square}^q(L^{\gamma'}(\mathbb{R};L^{\sigma'}))}.\]
\end{prop}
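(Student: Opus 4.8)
The plan is to reduce the three Boussinesq propagators to the unitary group $U(t):=e^{it|\xi|\langle\xi\rangle}$ (the Fourier multiplier with symbol $e^{it|\xi|\langle\xi\rangle}$), to prove block‑localized dispersive and Strichartz estimates for $U(t)$, and then to sum in $k$. First I would use Euler's formulas to write each of $B_1(t)$, $B_2(t)$ and $l_2(t)$ as a finite linear combination of operators $m(D)\,U(\pm t)$, with $m\equiv 1$ for $B_1$ and $l_2$ and $m(\xi)=|\xi|\langle\xi\rangle^{-1}$ for $B_2$. Since $|m|\leq 1$ and $\square_k m(D)$ is bounded on every $L^p(\mathbb{R}^n)$ uniformly in $k$ (for $k\neq 0$ because $m$ is smooth with bounded derivatives on the unit cube centered at $k$, and for $k=0$ because $\mathcal{F}^{-1}(\sigma_0 m)\in L^1(\mathbb{R}^n)$, $\sigma_0 m$ being compactly supported and homogeneous of degree $1$ near the origin), it suffices to prove the four estimates with $B_i(t)$ replaced by $U(t)$.

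Next I would establish the frequency‑localized dispersive estimate. The bounds of Lemma \ref{LinSciFel} are precisely the $L^{p'}\to L^p$ dispersive estimates for the real and imaginary parts of $U(t)$, and the oscillatory‑integral analysis behind them (cf.\ \cite{LinaresScialom}) gives $\|U(t)g\|_{L^\sigma}\leq c|t|^{-n(\frac12-\frac1\sigma)}\|g\|_{L^{\sigma'}}$ for $2\leq\sigma\leq\infty$. Localizing in frequency and repeating the argument of Lemma \ref{LinB1B2(1+t)} verbatim (treating $|t|\leq 1$ and $|t|>1$ separately, and using $\|\square_k f\|_{L^{p_2}}\apprle\sum_{l\in\Lambda}\|\square_{k+l}f\|_{L^{p_1}}$ from \cite{BaoLifBol}) I would upgrade this to $\|\square_k U(t)g\|_{L^\sigma}\apprle(1+|t|)^{-n(\frac12-\frac1\sigma)}\sum_{l\in\Lambda}\|\square_{k+l}g\|_{L^{\sigma'}}$, uniformly in $k\in\mathbb{Z}^n$; interpolating with the trivial identity $\|\square_k U(t)g\|_{L^2}=\|\square_k g\|_{L^2}$ (valid because $|e^{it|\xi|\langle\xi\rangle}|=1$) covers the whole range $2\leq\sigma\leq\infty$.

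With this in hand, the Strichartz estimates on each block follow from the abstract $TT^*$ machinery: the operator $g\mapsto\square_k U(\cdot)g$ has $T_kT_k^*$ acting as convolution in $x$ with $\mathcal{F}^{-1}\big(\sigma_k^2 e^{i(t-s)|\xi|\langle\xi\rangle}\big)$, which obeys the $L^1\to L^\infty$ bound $(1+|t-s|)^{-2/\gamma_\sigma}$; since $\gamma\geq\max\{2,\gamma_\sigma\}$ and $\frac{2}{\gamma_\sigma}=n(\frac12-\frac1\sigma)$, the standard $TT^*$/Keel--Tao machinery (using Young's inequality in time when $\gamma>\gamma_\sigma$ and the Hardy--Littlewood--Sobolev / endpoint estimate at the double endpoint, together with the Christ--Kiselev lemma for the retarded estimates away from that endpoint) yields $\|\square_k U(t)g\|_{L^\gamma_tL^\sigma_x}\apprle\|\square_k g\|_{L^2}$ and $\big\|\int_0^t U(t-\tau)\square_k g(\tau)\,d\tau\big\|_{L^\infty_tL^2_x\cap L^\gamma_tL^\sigma_x}\apprle\|\square_k g\|_{L^{\gamma'}_tL^{\sigma'}_x}$, with constants independent of $k$ (inserting a fattened cutoff $\tilde\square_k$ with $\square_k U=\square_k U\tilde\square_k$ and absorbing the resulting finite sums $\sum_{l\in\Lambda}$). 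Finally, since $\square_k$ commutes with $U(t)$ and with the time integral, I would raise the block estimates to the power $q$ and sum over $k$, using finiteness of $\Lambda$ to control $\sum_k\big(\sum_{l\in\Lambda}\|\square_{k+l}g\|\big)^q\apprle\sum_k\|\square_k g\|^q$, which gives $\|U(t)g\|_{l_\square^q(L^\gamma(\mathbb{R};L^\sigma))}\apprle\|g\|_{M_{2,q}}$ and the two retarded estimates; undoing the first‑paragraph reduction then yields the statement for $B_1$, $B_2$ and $l_2$. (All of this is exactly Propositions 5.1 and 5.3 of \cite{BaoxHudz} once the phase $|\xi|\langle\xi\rangle$ is recognized as admissible, so one may alternatively simply invoke those results.)

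The main obstacle is the frequency‑localized dispersive estimate with a constant uniform in $k$. It rests on a stationary‑phase analysis of the Boussinesq phase $|\xi|\langle\xi\rangle=\psi(|\xi|)$ with $\psi(r)=r\sqrt{1+r^2}$: it is smooth and Schr\"odinger‑like ($\psi(r)\sim r^2$) for large $|\xi|$ but only wave‑like near the origin, where the radial curvature $\psi''(r)\sim r$ degenerates while the angular curvature $\psi'(r)/r\sim r^{-1}$ blows up; balancing these two effects is what makes the low‑frequency block delicate, and it is precisely what is carried out in \cite{LinaresScialom,Lucas1}.
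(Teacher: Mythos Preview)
Your proposal is correct. The paper itself does not give a proof of Proposition \ref{Strichartz1}; it simply records the statement and attributes it to Propositions 5.1 and 5.3 of Wang--Hudzik \cite{BaoxHudz}. Your sketch (reduction to the unitary group $U(t)=e^{it|\xi|\langle\xi\rangle}$ via Euler's formulas, block-localized dispersive estimate using the argument of Lemma \ref{LinB1B2(1+t)}, abstract $TT^*$/Christ--Kiselev on each block, then $l^q$-summation) is exactly the argument behind that citation, as you yourself note in the final parenthetical. So there is nothing to compare: you have supplied the proof the paper merely invokes.

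One minor remark on the low-frequency multiplier: your justification that $\mathcal{F}^{-1}(\sigma_0 m)\in L^1$ because $\sigma_0 m$ is ``homogeneous of degree $1$ near the origin'' is a little terse; the point is that the compactly supported function $\sigma_0(\xi)\,|\xi|\langle\xi\rangle^{-1}$ has Fourier transform which is smooth (hence locally bounded) and decays like $|x|^{-n-1}$ at infinity, which is integrable. This is standard and does not affect the validity of the argument.
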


\begin{prop}\label{Strichartz2} 
Let $2\leq \sigma<\infty,$ $1\leq q<\infty$ and $\gamma \geq \max\{2, \gamma_{\sigma}\},$ where
\[\frac{2}{\gamma_{\sigma}}=n\left(\frac 12 -\frac{1}{\sigma} \right).\]
Then, for $i=1,3,$ we have
\[ \Vert B_i(t)g\Vert_{l_{\square}^q(L^{\gamma}(\mathbb{R};D^{-1}JL^{\sigma}))}\apprle \Vert g\Vert_{M_{2,q}}.\]
\[ \left\Vert \int_0^tB_1(t-\tau)g(\cdot,\tau)d\tau\right\Vert_{l_{\square}^q(L^{\infty}(\mathbb{R};D^{-1}JL^2))}\apprle \Vert g\Vert_{l_{\square}^q(L^{\gamma'}(\mathbb{R};L^{{\sigma}'}))}.\]
\[ \left\Vert \int_0^tB_1(t-\tau)g(\cdot,\tau)d\tau\right\Vert_{l_{\square}^q(L^{\gamma}(\mathbb{R};D^{-1}JL^{\sigma}))}\apprle \Vert g\Vert_{l_{\square}^q(L^{\gamma'}(\mathbb{R};L^{{\sigma}'}))}.\]
\end{prop}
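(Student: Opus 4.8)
\medskip
\noindent\emph{Proof idea.} The plan is to deduce each inequality in Proposition~\ref{Strichartz2} from its counterpart in Proposition~\ref{Strichartz1} by commuting the operator $J^{-1}D$ through the linear propagators. Recall that, by definition of the spaces involved, $\|h\|_{D^{-1}JL^{r}}=\|J^{-1}Dh\|_{L^{r}}$ for $1\le r\le\infty$, and that $J^{-1}D$ is the spatial Fourier multiplier with symbol $m(\xi)=|\xi|\langle\xi\rangle^{-1}$. The first step is to record the relevant properties of $m$: it is bounded, $0\le m<1$, smooth on $\mathbb{R}^{n}\setminus\{0\}$, and a direct computation gives $\partial_{j}m(\xi)=\xi_{j}|\xi|^{-1}\langle\xi\rangle^{-3}$ and, more generally, $|\partial^{\theta}m(\xi)|\le C_{\theta}|\xi|^{-|\theta|}$ for every $\theta\in(\mathbb{Z}^{+}\cup\{0\})^{n}$. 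Hence $m$ satisfies the Mikhlin--H\"ormander condition, so $J^{-1}D$ is bounded on $L^{r}(\mathbb{R}^{n})$ for $1<r<\infty$ (and trivially on $L^{2}$ by Plancherel). Since $J^{-1}D$ is a spatial multiplier it commutes with each $\square_{k}$, with $B_{1}(t)$ and $B_{3}(t)$, and with the time integral $\int_{0}^{t}(\cdot)\,d\tau$; combining the commutation with $\square_{k}$ and the uniform $L^{r}$-bound gives, for $X=L^{\gamma}(\mathbb{R};L^{\sigma})$, $X=L^{\gamma'}(\mathbb{R};L^{\sigma'})$ (with $2\le\sigma<\infty$) and $X=L^{\infty}(\mathbb{R};L^{2})$,
\[
\|J^{-1}Dh\|_{l_{\square}^{q}(X)}\lesssim\|h\|_{l_{\square}^{q}(X)}.
\]

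For the homogeneous estimates, the case $i=3$ is immediate because $l_{2}(t)=J^{-1}DB_{3}(t)$, so
\[
\|B_{3}(t)g\|_{l_{\square}^{q}(L^{\gamma}(\mathbb{R};D^{-1}JL^{\sigma}))}=\|l_{2}(t)g\|_{l_{\square}^{q}(L^{\gamma}(\mathbb{R};L^{\sigma}))}\lesssim\|g\|_{M_{2,q}}
\]
by the $l_{2}$-estimate of Proposition~\ref{Strichartz1}. For $i=1$ I would pull $J^{-1}D$ out, discard it by its boundedness, and then apply the $B_{1}$-estimate of Proposition~\ref{Strichartz1}:
\begin{align*}
\|B_{1}(t)g\|_{l_{\square}^{q}(L^{\gamma}(\mathbb{R};D^{-1}JL^{\sigma}))}
&=\|J^{-1}DB_{1}(t)g\|_{l_{\square}^{q}(L^{\gamma}(\mathbb{R};L^{\sigma}))}\\
&\lesssim\|B_{1}(t)g\|_{l_{\square}^{q}(L^{\gamma}(\mathbb{R};L^{\sigma}))}\lesssim\|g\|_{M_{2,q}}.
\end{align*}

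The two Duhamel estimates follow the same pattern. Since $J^{-1}D$ commutes with $B_{1}(t-\tau)$ and with the $\tau$-integration,
\[
J^{-1}D\int_{0}^{t}B_{1}(t-\tau)g(\cdot,\tau)\,d\tau=\int_{0}^{t}B_{1}(t-\tau)(J^{-1}Dg)(\cdot,\tau)\,d\tau,
\]
so, using the third estimate of Proposition~\ref{Strichartz1} and the boundedness of $J^{-1}D$ on $l_{\square}^{q}(L^{\gamma'}(\mathbb{R};L^{\sigma'}))$,
\begin{align*}
\Big\|\int_{0}^{t}B_{1}(t-\tau)g\,d\tau\Big\|_{l_{\square}^{q}(L^{\infty}(\mathbb{R};D^{-1}JL^{2}))}
&=\Big\|\int_{0}^{t}B_{1}(t-\tau)(J^{-1}Dg)\,d\tau\Big\|_{l_{\square}^{q}(L^{\infty}(\mathbb{R};L^{2}))}\\
&\lesssim\|J^{-1}Dg\|_{l_{\square}^{q}(L^{\gamma'}(\mathbb{R};L^{\sigma'}))}\lesssim\|g\|_{l_{\square}^{q}(L^{\gamma'}(\mathbb{R};L^{\sigma'}))},
\end{align*}
and identically, with the fourth estimate of Proposition~\ref{Strichartz1} in place of the third, one obtains the bound with $L^{\infty}(\mathbb{R};D^{-1}JL^{2})$ on the left replaced by $L^{\gamma}(\mathbb{R};D^{-1}JL^{\sigma})$.

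The only step that really needs care is the $L^{r}$-boundedness of $J^{-1}D$, that is, the behaviour of $m(\xi)=|\xi|\langle\xi\rangle^{-1}$ near $\xi=0$, where the factor $|\xi|$ is not smooth. The computation above shows the conical singularity is mild enough --- each derivative costs exactly one power of $|\xi|^{-1}$ --- so Mikhlin's theorem applies globally; alternatively, and more in the spirit of modulation spaces, one may argue block by block, writing $\square_{k}J^{-1}D=\square_{k}\big(\mathcal{F}^{-1}(m\,\widetilde{\sigma}_{k})\mathcal{F}\big)\square_{k}$ with $\widetilde{\sigma}_{k}$ a fattened version of $\sigma_{k}$ and checking that $\|\mathcal{F}^{-1}(m\,\widetilde{\sigma}_{k})\|_{L^{1}(\mathbb{R}^{n})}$ is bounded uniformly in $k$ (for $|k|$ large by a scaling argument, since $m$ and its derivatives are controlled on $\{|\xi|\gtrsim1\}$; for the finitely many blocks near the origin by a direct estimate, using that $m\,\widetilde{\sigma}_{k}$ is Lipschitz and compactly supported, so its inverse Fourier transform is bounded near $0$ and decays like $|x|^{-n-1}$ at infinity). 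Once this uniform bound is available, everything else is a mechanical transfer of Proposition~\ref{Strichartz1}, and I do not expect any further difficulty.
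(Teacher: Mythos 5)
Your proposal is correct and follows essentially the same route as the paper: reduce everything to Proposition~\ref{Strichartz1} via the identity $\Vert h\Vert_{D^{-1}JL^{\sigma}}=\Vert J^{-1}Dh\Vert_{L^{\sigma}}$, use $l_{2}(t)=J^{-1}DB_{3}(t)$ for $i=3$, and discard $J^{-1}D$ by its $L^{r}$-boundedness elsewhere (the paper simply invokes this boundedness, whereas you verify the Mikhlin condition for $m(\xi)=|\xi|\langle\xi\rangle^{-1}$; in the Duhamel terms you drop the multiplier on the data side rather than the solution side, which is immaterial).
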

\proof Recall that $ \Vert g\Vert_{l_{\square}^q(L^{\gamma}(\mathbb{R};D^{-1}JL^{\sigma}))}:=\Vert J^{-1}D g\Vert_{l_{\square}^q(L^{\gamma}(\mathbb{R};L^{\sigma}))}.$ Then, from the $L^{\sigma}$-continuity of $J^{-1}D$ and Proposition \ref{Strichartz1}, we have
\begin{align*}
\Vert B_1(t)g\Vert_{l_{\square}^q(L^{\gamma}(\mathbb{R};D^{-1}JL^{\sigma}))}&=\Vert J^{-1}DB_1(t)g\Vert_{l_{\square}^q(L^{\gamma}(\mathbb{R};L^{\sigma}))}\leq \Vert B_1(t)g\Vert_{l_{\square}
^q(L^{\gamma}(\mathbb{R};L^{\sigma}))}\\
&\apprle \Vert g\Vert_{M_{2,q}}.
\end{align*}
Similarly, we obtain
\begin{align*}
\Vert B_3(t)g\Vert_{l_{\square}^q(L^{\gamma}(\mathbb{R};D^{-1}JL^{\sigma}))}&=\Vert J^{-1}DB_3(t)g\Vert_{l_{\square}^q(L^{\gamma}(\mathbb{R};L^{\sigma}))}=\Vert l_2(t)g\Vert_{l_{\square}^q(L^{\gamma}(\mathbb{R};L^{\sigma}))}\\
&\apprle \Vert g\Vert_{M_{2,q}}.
\end{align*}
Again, from Proposition \ref{Strichartz1} we have
\begin{align*}
\left\Vert \int_0^tB_1(t-\tau)g(\cdot,\tau)d\tau\right\Vert_{l_{\square}^q(L^{\infty}(\mathbb{R};D^{-1}JL^2))}&=\left\Vert \int_0^tJ^{-1}DB_1(t-\tau)g(\cdot,\tau)d\tau\right\Vert_{l_{\square}^q(L^{\infty}(\mathbb{R};L^2))}\\
&\leq \left\Vert \int_0^tB_1(t-\tau)g(\cdot,\tau)d\tau\right\Vert_{l_{\square}^q(L^{\infty}(\mathbb{R};L^2))}\\
&\apprle \Vert g\Vert_{l_{\square}^q(L^{\gamma'}(\mathbb{R};L^{{\sigma}'}))}.
\end{align*}
By the $L^{\sigma}$-continuity of $J^{-1}D$ and Proposition \ref{Strichartz1}, we obtain
\begin{align*}
\left\Vert \int_0^tB_1(t-\tau)g(\cdot,\tau)d\tau\right\Vert_{l_{\square}^q(L^{\gamma}(\mathbb{R};D^{-1}JL^{\sigma}))}&=\left\Vert \int_0^tJ^{-1}DB_1(t-\tau)g(\cdot,\tau)d\tau\right\Vert_{l_{\square}^q(L^{\gamma}(\mathbb{R};L^{\sigma}))}\\
&\leq \left\Vert \int_0^tB_2(t-\tau)g(\cdot,\tau)d\tau\right\Vert_{l_{\square}^q(L^{\gamma}(\mathbb{R};L^{\sigma}))}\apprle  \Vert g\Vert_{l_{\square}^q(L^{\gamma'}(\mathbb{R};L^{{\sigma}'}))}.
\end{align*}
This finish the proof of the proposition. 
\endproof
\section{Proofs}
In this section we prove the results established in Section 1. We start estimating the nonlinear part of (\ref{ms5}). For this, let us define the operators
\[\Phi_1[u(t),v(t)]=B_1(t)u_0+B_2(t)v_0-\int_0^tB_2(t-\tau)f(u(\tau))d\tau,\]
\[\Phi_2[u(t),v(t)]=B_3(t)u_0+B_1(t)v_0-\int_0^tB_1(t-\tau)f(u(\tau))d\tau,\]
and $\Phi[u,v]=[\Phi_1(u,v), \Phi_2(u,v) ].$
\begin{prop}\label{non_est} Asume that $\lambda>1$ is a positive integer,  $1\leq q<2,$ $p=\lambda+1, $ and $n-\frac nq \leq s < \frac nq,$ and $\lambda>\lambda_0(n).$ If $q=1$ assume also that $\lambda\geq 2.$ There exists a constant $C_1>0$ such that
\begin{align}
&\Vert \Phi[u,v]-\Phi[\tilde{u},\tilde{v}]\Vert_{\mathcal{L}^\infty_{\alpha,s}}\leq C_1\sup_{-\infty<t<\infty}(1+ \vert t\vert)^{\alpha}\Vert (u-\tilde{u})(t)\Vert_{M^s_{p,q}}\nonumber\\
&\ \ \ \times\sup_{-\infty<t<\infty}(1+ \vert t\vert)^{\alpha(\lambda-1)}\sum_{k=1}^{\lambda}\Vert u(t)\Vert^{\lambda-k}_{M^s_{p,q}}\Vert \tilde{u}(t)\Vert^{k-1}_{M^s_{p,q}}.\label{ms4}
\end{align}
\end{prop}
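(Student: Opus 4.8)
The plan is to estimate the difference $\Phi[u,v]-\Phi[\tilde u,\tilde v]$ componentwise. Since the linear parts $B_1(t)u_0+B_2(t)v_0$ and $B_3(t)u_0+B_1(t)v_0$ cancel in the difference, only the Duhamel terms survive, so
\[
\Phi_1[u,v]-\Phi_1[\tilde u,\tilde v]=-\int_0^t B_2(t-\tau)\bigl(f(u(\tau))-f(\tilde u(\tau))\bigr)\,d\tau,
\]
and likewise for $\Phi_2$ with $B_1$ in place of $B_2$. First I would apply the time-decay linear estimates for the $M^s_{p,q}$-norm (Lemma \ref{LinB1B2(1+t)} for the $u$-component) and the $D^{-1}JM^s_{p,q}$-norm (Lemma \ref{LinB1B3(1+t)} for the $v$-component) inside the integral, which gives
\[
\Vert \Phi_i[u,v]-\Phi_i[\tilde u,\tilde v]\Vert \apprle \int_0^{|t|} (1+|t-\tau|)^{-\alpha}\,\Vert f(u(\tau))-f(\tilde u(\tau))\Vert_{M^s_{p',q}}\,d\tau,
\]
with $p'=(\lambda+1)/\lambda$. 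The key algebraic identity is $f(u)-f(\tilde u)=u^\lambda-\tilde u^\lambda=(u-\tilde u)\sum_{k=1}^{\lambda} u^{\lambda-k}\tilde u^{k-1}$, reducing everything to estimating a product of $\lambda$ factors in a modulation space.

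The heart of the matter is the multilinear estimate in modulation spaces: I need
\[
\Vert (u-\tilde u)\,u^{\lambda-k}\,\tilde u^{k-1}\Vert_{M^s_{p',q}} \apprle \Vert u-\tilde u\Vert_{M^s_{p,q}}\,\Vert u\Vert^{\lambda-k}_{M^s_{p,q}}\,\Vert \tilde u\Vert^{k-1}_{M^s_{p,q}}.
\]
This follows by combining Lemma \ref{Ib1} (or iterating Lemma \ref{Ib1Opcion2}) for the product structure with Lemma \ref{Ib2} for the pure powers $u^{\lambda-k}$ and $\tilde u^{k-1}$; the index bookkeeping is exactly where the hypotheses $p=\lambda+1$, $1\le q<2$, $n-\frac nq\le s<\frac nq$, and the extra assumption $\lambda\ge 2$ when $q=1$ come into play, ensuring the Hölder relations $\frac{1}{p'}=\frac{\lambda}{p}$ and $\frac{1}{q}=\frac{\lambda}{q}-(\lambda-1)$... wait — more precisely one needs $\frac{1}{\sigma}=\sum\frac{1}{\sigma_i}-(\lambda-1)$ to be admissible, which with $\sigma_i=q$ forces $q<2$ (and the $q=1$ borderline needs $\lambda\ge 2$ so that the relevant smoothness/summability inequality $\frac{1}{\sigma}-\frac{(p-1)s}{n}\le\frac{p}{\mu}-p+1$ holds), and the condition $s<n/q$ is exactly the range requirement in Lemmas \ref{Ib1}–\ref{Ib2}. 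I would verify these index conditions carefully, as this is the main obstacle.

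Having the multilinear bound, I would insert it under the integral, then extract the sup:
\[
\Vert f(u(\tau))-f(\tilde u(\tau))\Vert_{M^s_{p',q}} \apprle (1+|\tau|)^{-\alpha}\!\Bigl(\sup_\tau (1+|\tau|)^{\alpha}\Vert (u-\tilde u)(\tau)\Vert_{M^s_{p,q}}\Bigr)\Bigl(\sup_\tau (1+|\tau|)^{\alpha(\lambda-1)}\textstyle\sum_{k=1}^{\lambda}\Vert u\Vert^{\lambda-k}_{M^s_{p,q}}\Vert \tilde u\Vert^{k-1}_{M^s_{p,q}}\Bigr),
\]
using $(1+|\tau|)^{-\alpha}(1+|\tau|)^{-\alpha(\lambda-1)}=(1+|\tau|)^{-\alpha\lambda}$. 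The remaining point is the time integral
\[
(1+|t|)^{\alpha}\int_0^{|t|}(1+|t-\tau|)^{-\alpha}(1+|\tau|)^{-\alpha\lambda}\,d\tau \le C,
\]
which holds uniformly in $t$ precisely because $\alpha<1$ (from $\lambda<\lambda_1(n)$, equivalently $\alpha=n(\tfrac12-\tfrac1p)<1$) and $\alpha\lambda>1$ (from $\lambda>\lambda_0(n)$, which is the root condition $n\lambda^2-(n+2)\lambda-2=0$ rearranged into $\alpha\lambda>1$); this is the standard decay-convolution lemma and explains the role of $\lambda_0(n)$ in the hypotheses. Combining the two components and taking the supremum defining $\Vert\cdot\Vert_{\mathcal L^\infty_{\alpha,s}}$ yields \eqref{ms4} with an explicit constant $C_1$.
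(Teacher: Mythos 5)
Your proposal follows the same route as the paper's proof: the linear parts cancel, you factor $u^\lambda-\tilde u^\lambda=(u-\tilde u)\sum_{k=1}^{\lambda}u^{\lambda-k}\tilde u^{k-1}$, apply Lemmas \ref{LinB1B2(1+t)} and \ref{LinB1B3(1+t)} to extract the weight $(1+|t-\tau|)^{-\alpha}$, use Lemmas \ref{Ib1} and \ref{Ib2} (with Lemma \ref{Ib1Opcion2} replacing Lemma \ref{Ib1} when $q=1$) for the product estimate in $M^s_{p',q}$, and reduce to a weighted time integral. That is exactly the paper's argument.

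The one genuine problem is the justification of the last step. You assert that
\begin{equation*}
(1+|t|)^{\alpha}\int_0^{|t|}(1+|t-\tau|)^{-\alpha}(1+|\tau|)^{-\alpha\lambda}\,d\tau\leq C
\end{equation*}
holds ``precisely because $\alpha<1$ and $\alpha\lambda>1$,'' deriving $\alpha<1$ from $\lambda<\lambda_1(n)$. But $\lambda<\lambda_1(n)$ is not among the hypotheses of Proposition \ref{non_est}: only $\lambda>\lambda_0(n)$ is assumed, and $\alpha=n\bigl(\tfrac12-\tfrac1{\lambda+1}\bigr)$ can be $\geq 1$ in the admissible range (for instance $n=3$, $\lambda=6$ gives $\alpha=15/14$ while $\lambda_0(3)=2$). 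So, as justified, your proof covers only part of the claimed range. The bound is in fact true for every $\alpha>0$ with $\alpha\lambda>1$, because the kernel $(1+|t-\tau|)^{-\alpha}$ has no singularity at $\tau=t$: split the integral at $t/2$; on $[0,t/2]$ use $(1+|t-\tau|)^{-\alpha}\apprle(1+t)^{-\alpha}$ together with $\alpha\lambda>1$; on $[t/2,t]$ change variables to get $(1+t)^{-\alpha\lambda}\int_0^{t/2}(1+\tau)^{-\alpha}\,d\tau$, which is $\apprle(1+t)^{-\alpha}$ in each of the cases $\alpha<1$, $\alpha=1$, $\alpha>1$. This is exactly the case analysis the paper carries out; substituting it for your appeal to $\alpha<1$ completes the proof. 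A secondary remark: the index relation you wrote for the $q$-exponents, $\tfrac1q=\tfrac\lambda q-(\lambda-1)$, forces $q=1$; the actual source of the restriction $q<2$ is the requirement $1-\tfrac1q\leq\tfrac sn<\tfrac1q$ in Lemma \ref{Ib1}, which needs $n-\tfrac nq<\tfrac nq$.
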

\begin{proof}
Without loss of generality we assume $t>0.$ First suppose that  $1<q<2.$ Since $n-\frac nq\leq s<\frac nq,$ from Lemmas \ref{LinB1B2(1+t)}, \ref{LinB1B3(1+t)}, \ref{Ib1}, \ref{Ib2}, we obtain
\begin{align}
&\Vert \Phi[u,v]-\Phi[\tilde{u},\tilde{v}]\Vert_{M^s_{p,q}}+\Vert \Phi[u,v]-\Phi[\tilde{u},\tilde{v}]\Vert_{D^{-1}JM^s_{p,q}}\nonumber\\
&\ \ \ \leq\int_0^t\Vert B_2(t-\tau)(f(u)-f(\tilde{u}))\Vert_{M^s_{p,q}}d\tau+\int_0^t\Vert B_1(t-\tau)(f(u)-f(\tilde{u}))\Vert_{D^{-1}JM^s_{p,q}}d\tau\nonumber\\
&\ \ \ \apprle\int_0^t(1+ \vert t-\tau\vert)^{-\alpha}\left\Vert (u-\tilde{u})\left( \sum_{k=1}^{\lambda}u^{\lambda-k} \tilde{u}^{k-1}\right)\right\Vert_{M^s_{p',q}}d\tau\nonumber\\ 
&\ \ \ \apprle\int_0^t(1+ \vert t-\tau\vert)^{-\alpha}\Vert u-\tilde{u}\Vert_{M^s_{p,q}}\sum_{k=1}^{\lambda}\Vert u^{\lambda-k}\Vert_{M^s_{\frac{p}{\lambda-k},q}}\Vert \tilde{u}^{k-1}\Vert_{M^s_{\frac{p}{k-1},q}}d\tau\nonumber\\ 
&\ \ \ \apprle\int_0^t(1+ \vert t-\tau\vert)^{-\alpha}\Vert u-\tilde{u}\Vert_{M^s_{p,q}}\sum_{k=1}^{\lambda}\Vert u\Vert^{\lambda-k}_{M^s_{p,q}}\Vert \tilde{u}\Vert^{k-1}_{M^s_{p,q}}d\tau\nonumber\\ 
&\ \ \ \apprle\sup_{0<t<\infty}(1+ t)^{\alpha}\Vert u-\tilde{u}\Vert_{M^s_{p,q}}\sum_{k=1}^{\lambda}\sup_{0<t<\infty}(1+  \tau)^{\alpha(\lambda-k)}\Vert u\Vert^{\lambda-k}_{M^s_{p,q}}\sup_{0<t<\infty}(1+ \tau)^{\alpha(k-1)}\Vert \tilde{u}\Vert^{k-1}_{M^s_{p,q}}\nonumber\\
&\ \ \ \times \int_0^t(1+t-\tau)^{-\alpha}(1+\tau)^{-\alpha\lambda}d\tau.\label{mod1qno1}
\end{align}
Since $\lambda>\lambda_0(n),$ then $\alpha\lambda> 1;$ thus, it follows that
\begin{align}
\int_0^{t/2}(1+t-\tau)^{-\alpha}(1+ \tau)^{-\alpha\lambda}d\tau \apprle (1+t)^{-\alpha}\int_0^{t/2}(1+ \tau)^{-\alpha\lambda}d\tau\apprle (1+ t)^{-\alpha}.\label{mod2qno1}
\end{align}
Also, it is straightforward to get
\begin{align}
\int_{t/2}^t(1+ t-\tau)^{-\alpha}(1+ \tau)^{-\alpha\lambda}d\tau&= \int_{0}^{t/2}(1+ \tau)^{-\alpha}(1+ t-\tau)^{-\alpha\lambda}d\tau \nonumber\\
&\ \apprle(1+t)^{-\alpha\lambda}\int_0^{t/2}(1+ \tau)^{-\alpha}d\tau  \apprle (1+ t)^{-\alpha}.\label{mod3qno1}
\end{align}
Indeed, for $0<\lambda<1$, since $\alpha\lambda> 1,$ it holds 
$$(1+t)^{-\alpha\lambda}\int_0^{t/2}(1+ \tau)^{-\alpha}d\tau = \frac{(1+t)^{-\alpha\lambda}}{1-\alpha}\left[ (1+\frac{t}{2})^{1-\alpha}-1\right] \apprle (1+ t)^{-\alpha}.$$
For $\alpha>1,$ it holds
$$(1+t)^{-\alpha\lambda}\int_0^{t/2}(1+ \tau)^{-\alpha}d\tau = \frac{(1+t)^{-\alpha\lambda}}{\alpha-1}\left[(1+\frac{t}{2})^{1-\alpha}-1\right] \apprle (1+ t)^{-\alpha}.$$
If $\alpha=1$ and $\lambda\geq 2,$ it holds
$$(1+t)^{-\alpha\lambda}\int_0^{t/2}(1+ \tau)^{-\alpha}d\tau \apprle (1+t)^{-1}\frac{\ln(1+t)}{(1+t)^{\lambda-1}}\apprle (1+t)^{-1}(1+t)^{2-\lambda}\apprle (1+t)^{-1}.$$
Finally, if $\alpha=1$ and $1<\lambda<2,$ it holds
$$(1+t)^{-\alpha\lambda}\int_0^{t/2}(1+ \tau)^{-\alpha}d\tau \apprle (1+t)^{-1}\frac{\ln(1+t)}{(1+t)^{\lambda-1}}\apprle (1+t)^{-1}\frac{1}{\lambda-1}\apprle (1+t)^{-1}.$$

From (\ref{mod1qno1})-(\ref{mod3qno1}) we conclude the proof of (\ref{ms4}) for $1<q<2.$ The proof in the case $q=1$ is the same, just using Lemma \ref{Ib1Opcion2} instead of Lemma \ref{Ib1}.
\end{proof}
\begin{remark}\label{rem2}
Assume $p, q$ and $s$ as in Proposition \ref{non_est}. If $\lambda_0(n)<\lambda <\lambda_1(n),$ there exists a constant $C_2>0$ such that
\begin{align}
&\Vert \Phi[u,v]-\Phi[\tilde{u},\tilde{v}]\Vert_{\mathcal{H}^\infty_{\beta,s}}\leq C_2\sup_{-\infty<t<\infty}\vert t\vert^{\beta}\Vert u-\tilde{u}\Vert_{M^s_{p,q}}\nonumber\\
&\ \ \ \times\sup_{-\infty<t<\infty}\vert t\vert^{\beta(\lambda-1)}\sum_{k=1}^{\lambda}\Vert u(t)\Vert^{\lambda-k}_{M^s_{p,q}}\Vert \tilde{u}(t)\Vert^{k-1}_{M^s_{p,q}}.\label{ms5A}
\end{align}
Indeed, using Lemmas \ref{LinB1B2}, \ref{LinB1B3}, \ref{Ib1Opcion2}, \ref{Ib1}, \ref{Ib2}, and following  the proof of Proposition \ref{non_est} we get
\begin{align}
&\Vert \Phi[u,v]-\Phi[\tilde{u},\tilde{v}]\Vert_{M^s_{p,q}}+\Vert \Phi[u,v]-\Phi[\tilde{u},\tilde{v}]\Vert_{D^{-1}JM^s_{p,q}}\nonumber\\
&\ \ \ \apprle\int_0^t(t-\tau)^{-\alpha}\Vert u-\tilde{u}\Vert_{M^s_{p,q}}\sum_{k=1}^{\lambda}\Vert u\Vert^{\lambda-k}_{M^s_{p,q}}\Vert \tilde{u}\Vert^{k-1}_{M^s_{p,q}}d\tau\nonumber\\ 
&\ \ \ \apprle\sup_{0<t<\infty}t^{\beta}\Vert  u-\tilde{u}\Vert_{M^s_{p,q}}\sum_{k=1}^{\lambda}\sup_{0<t<\infty}t^{\beta(\lambda-k)}\Vert u(t)\Vert^{\lambda-k}_{M^s_{p,q}}\sup_{0<t<\infty}t^{\beta(k-1)}\Vert \tilde{u}(t)\Vert^{k-1}_{M^s_{p,q}} \nonumber\\
&\ \ \ \times \int_0^t(t-\tau)^{-\alpha}\tau^{-\beta\lambda}d\tau.\label{mod1}
\end{align}
Since $1-\beta(\lambda-1)-\alpha=0,$  multiplying by $t^{\beta}$ and taking the supremum over $\mathbb{R}$ we conclude (\ref{ms5A}). Notice the the condition $\lambda_0(n)<\lambda <\lambda_1(n)$ is due the integrability of the beta function in (\ref{mod1}).
\end{remark}
\begin{prop}\label{non_estLoc} Asume that $\lambda>1$ is a positive integer,  $1\leq q<2,$ $p=\lambda+1 and  $  $n-\frac nq \leq s < \frac nq.$  If $q=1$ assume also that $\lambda\geq 2.$
Then there exists a constant $C_3>0$ such that
\begin{align}
&\Vert \Phi[u,v]-\Phi[\tilde{u},\tilde{v}]\Vert_{\mathcal{L}^T_{\alpha,s}}\leq C_3 T \sup_{-T<t<T}(1+ \vert t\vert)^{\alpha}\Vert (u-\tilde{u})(t)\Vert_{M^s_{p,q}}\nonumber\\
&\ \ \ \times\sup_{-T<t<T}(1+ \vert t\vert)^{\alpha(\lambda-1)}\sum_{k=1}^{\lambda}\Vert u(t)\Vert^{\lambda-k}_{M^s_{p,q}}\Vert \tilde{u}(t)\Vert^{k-1}_{M^s_{p,q}}.\label{ms4Loc}
\end{align}
\end{prop}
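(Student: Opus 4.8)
The plan is to follow the proof of Proposition~\ref{non_est} essentially line by line, replacing every time-decay estimate by a crude bound valid on the bounded interval $(-T,T)$. Assume without loss of generality that $t>0$. Since the linear contributions $B_1(t)u_0+B_2(t)v_0$ and $B_3(t)u_0+B_1(t)v_0$ are common to $\Phi[u,v]$ and $\Phi[\tilde u,\tilde v]$, the difference $\Phi[u,v]-\Phi[\tilde u,\tilde v]$ is exactly the Duhamel term with integrand $f(u)-f(\tilde u)=u^\lambda-\tilde u^\lambda$. Applying the triangle inequality and then the estimates of Lemmas~\ref{LinB1B2(1+t)} and \ref{LinB1B3(1+t)} to $B_2(t-\tau)$ and $B_1(t-\tau)$ respectively, one reduces the left-hand side of (\ref{ms4Loc}) to bounding
\[
(1+t)^\alpha\int_0^t(1+|t-\tau|)^{-\alpha}\,\Vert f(u(\tau))-f(\tilde u(\tau))\Vert_{M^s_{p',q}}\,d\tau .
\]

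Next I would factor $u^\lambda-\tilde u^\lambda=(u-\tilde u)\sum_{k=1}^{\lambda}u^{\lambda-k}\tilde u^{k-1}$ and estimate the $M^s_{p',q}$-norm of the product by Lemma~\ref{Ib1} (or Lemma~\ref{Ib1Opcion2} when $q=1$) together with the power estimate of Lemma~\ref{Ib2}; the identity $\tfrac1{p'}=\tfrac\lambda p=\tfrac1p+\tfrac{\lambda-k}p+\tfrac{k-1}p$ (valid since $p=\lambda+1$) and the hypothesis $n-\tfrac nq\le s<\tfrac nq$ supply exactly the exponent conditions required by those lemmas, giving
\[
\Vert f(u)-f(\tilde u)\Vert_{M^s_{p',q}}\apprle \Vert u-\tilde u\Vert_{M^s_{p,q}}\sum_{k=1}^{\lambda}\Vert u\Vert_{M^s_{p,q}}^{\lambda-k}\Vert\tilde u\Vert_{M^s_{p,q}}^{k-1}.
\]
Inserting the trivial factors $(1+\tau)^{-\alpha}(1+\tau)^{\alpha}$ and $(1+\tau)^{-\alpha(\lambda-1)}(1+\tau)^{\alpha(\lambda-1)}$ and pulling the two weighted suprema over $(-T,T)$ outside the integral, the whole estimate is reduced to the single bound
\[
(1+t)^\alpha\int_0^t(1+|t-\tau|)^{-\alpha}(1+\tau)^{-\alpha\lambda}\,d\tau\apprle T,\qquad 0<t<T .
\]

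The only genuine departure from Proposition~\ref{non_est} is this last inequality: here we do not assume $\lambda>\lambda_0(n)$, so $\alpha\lambda>1$ may fail and the beta-function decay bound $\apprle(1+t)^{-\alpha}$ is unavailable. Instead I would split the integral at $t/2$: on $(0,t/2)$ one has $1+t-\tau\ge\tfrac12(1+t)$, hence $(1+t)^\alpha(1+t-\tau)^{-\alpha}\apprle1$ and $\int_0^{t/2}(1+\tau)^{-\alpha\lambda}\,d\tau\le t/2\le T/2$; on $(t/2,t)$ one has $1+\tau\ge\tfrac12(1+t)$, so $(1+t)^\alpha(1+\tau)^{-\alpha\lambda}\apprle(1+t)^{-\alpha(\lambda-1)}\le1$ (using $\lambda>1$ and $\alpha>0$), while $\int_{t/2}^t(1+t-\tau)^{-\alpha}\,d\tau=\int_0^{t/2}(1+s)^{-\alpha}\,ds\le T/2$. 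Adding the two pieces gives the displayed bound, and taking the supremum over $-T<t<T$ yields (\ref{ms4Loc}). The case $q=1$ is handled identically, replacing Lemma~\ref{Ib1} by Lemma~\ref{Ib1Opcion2}, which requires $s\ge0$ and is the reason $\lambda\ge2$ is imposed there. The main (and really only) obstacle is this final step: after the weight $(1+t)^\alpha$ from the $\mathcal{L}^T_{\alpha,s}$-norm is carried inside the time integral, one must check that the crude bounds collapse to a clean linear factor $T$ (and not, say, $T(1+T)^\alpha$); the dyadic split is precisely what makes this work uniformly over the admissible range of $\alpha$ and $\lambda$.
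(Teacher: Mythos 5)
Your proposal is correct and follows essentially the same route as the paper: reduce to the Duhamel term, apply Lemmas \ref{LinB1B2(1+t)}--\ref{LinB1B3(1+t)} and the product estimates (Lemma \ref{Ib1}, or \ref{Ib1Opcion2} when $q=1$, together with Lemma \ref{Ib2}), and then bound $(1+t)^\alpha\int_0^t(1+|t-\tau|)^{-\alpha}(1+\tau)^{-\alpha\lambda}\,d\tau$ by $CT$ via the same dyadic split at $t/2$, using only $\alpha>0$ and $\alpha\lambda>0$ rather than $\alpha\lambda>1$. The only quibble is your parenthetical explanation of why $\lambda\ge 2$ is needed when $q=1$ (the constraint $s\ge 0$ is automatic there since $n-n/q=0$), but this does not affect the validity of the argument.
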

\begin{proof}
Without loss of generality we assume $t>0.$ First suppose that  $1<q<2.$ Since $n-\frac nq\leq s<\frac nq,$ from Lemmas \ref{LinB1B2(1+t)}, \ref{LinB1B3(1+t)}, \ref{Ib1}, \ref{Ib2}, we obtain
\begin{align}
&\Vert \Phi[u,v]-\Phi[\tilde{u},\tilde{v}]\Vert_{M^s_{p,q}}+\Vert \Phi[u,v]-\Phi[\tilde{u},\tilde{v}]\Vert_{D^{-1}JM^s_{p,q}}\nonumber\\
&\ \ \ \leq\int_0^t\Vert B_2(t-\tau)(f(u)-f(\tilde{u}))\Vert_{M^s_{p,q}}d\tau+\int_0^t\Vert B_1(t-\tau)(f(u)-f(\tilde{u}))\Vert_{D^{-1}JM^s_{p,q}}d\tau\nonumber\\
&\ \ \ \apprle\int_0^t(1+ \vert t-\tau\vert)^{-\alpha}\left\Vert (u-\tilde{u})\left( \sum_{k=1}^{\lambda}u^{\lambda-k} \tilde{u}^{k-1}\right)\right\Vert_{M^s_{p',q}}d\tau\nonumber\\ 
&\ \ \ \apprle\int_0^t(1+ \vert t-\tau\vert)^{-\alpha}\Vert u-\tilde{u}\Vert_{M^s_{p,q}}\sum_{k=1}^{\lambda}\Vert u^{\lambda-k}\Vert_{M^s_{\frac{p}{\lambda-k},q}}\Vert \tilde{u}^{k-1}\Vert_{M^s_{\frac{p}{k-1},q}}d\tau\nonumber\\ 
&\ \ \ \apprle\int_0^t(1+ \vert t-\tau\vert)^{-\alpha}\Vert u-\tilde{u}\Vert_{M^s_{p,q}}\sum_{k=1}^{\lambda}\Vert u\Vert^{\lambda-k}_{M^s_{p,q}}\Vert \tilde{u}\Vert^{k-1}_{M^s_{p,q}}d\tau\nonumber\\ 
&\ \ \ \apprle\sup_{0<t<T}(1+  t)^{\alpha}\Vert u-\tilde{u}\Vert_{M^s_{p,q}}\sum_{k=1}^{\lambda}\sup_{0<t<T}(1+  \tau)^{\alpha(\lambda-k)}\Vert u\Vert^{\lambda-k}_{M^s_{p,q}}\sup_{0<t<T}(1+  \tau)^{\alpha(k-1)}\Vert \tilde{u}\Vert^{k-1}_{M^s_{p,q}}\nonumber\\
&\ \ \ \times \int_0^t(1+ t-\tau)^{-\alpha}(1+ \tau)^{-\alpha\lambda}d\tau.\label{mod1qno1Loc}
\end{align}
Since $\alpha\lambda>0,$   we obtain 
\begin{align}
\int_0^{t/2}(1+ t-\tau)^{-\alpha}(1+\tau)^{-\alpha\lambda}d\tau &\apprle (1+t)^{-\alpha}\int_0^{t/2}(1+ \tau)^{-\alpha\lambda}d\tau\apprle (1+ t)^{-\alpha} T.\label{mod2qno1}
\end{align} 
Using that  $\alpha>0$ and $\alpha(\lambda-1)>0$  we arrive at 
\begin{align}
&\int_{t/2}^t(1+ t-\tau)^{-\alpha}(1+ \tau)^{-\alpha\lambda}d\tau=\int_{0}^{t/2}(1+ \tau)^{-\alpha}(1+ t-\tau)^{-\alpha\lambda}d\tau\apprle (1+t)^{-\alpha\lambda}\int_0^{t/2}(1+ \tau)^{-\alpha}d\tau\nonumber\\
&\apprle (1+t)^{-\alpha}(1+t)^{-\alpha(\lambda-1)}\int_0^{t/2}(1+ \tau)^{-\alpha}d\tau \apprle (1+ t)^{-\alpha} T.\label{mod3qno1Loc}
\end{align}
From (\ref{mod1qno1Loc})-(\ref{mod3qno1Loc}) we conclude the proof of (\ref{ms4Loc}) for $1<q<2.$ The proof in the case $q=1$ is the same, just using Lemma \ref{Ib1Opcion2} instead of Lemma \ref{Ib1}.
\end{proof}
\subsection {Proof of Theorem \ref{global1}}
We prove that the mapping $\Phi[u,v]=[\Phi_1(u,v), \Phi_2(u,v) ]$ defines a contraction in the metric space $\mathcal{B}_{2\epsilon}=\{[u,v]\in \mathcal{L}^\infty_{\alpha,s}:\ \Vert [u,v]\Vert_{\mathcal{L}^\infty_{\alpha,s}}\leq 2\epsilon\},$ for some $\epsilon>0.$ From Proposition \ref{non_est} and applying the Young inequality in each term $\Vert u(t)\Vert^{\lambda-k}_{M^s_{p,q}}\Vert \tilde{u}(t)\Vert^{k-1}_{M^s_{p,q}},$ $k=1,...,\lambda,$ we have
\begin{align}
\Vert \Phi[u,v]-\Phi[\tilde{u},\tilde{v}]\Vert_{\mathcal{L}^\infty_{\alpha,s}}&\leq C_1\Vert [u,v]-[\tilde{u},\tilde{v}]\Vert_{\mathcal{L}^\infty_{\alpha,s}}(\Vert [u,v]\Vert^{\lambda-1}_{\mathcal{L}^\infty_{\alpha,s}}+\Vert [\tilde{u},\tilde{v}]\Vert^{\lambda-1}_{\mathcal{L}^\infty_{\alpha,s}})\nonumber\\
&\leq 2^\lambda\epsilon^{\lambda-1}C_1\Vert [u,v]-[\tilde{u},\tilde{v}]\Vert_{\mathcal{L}^\infty_{\alpha,s}}.\label{ms7}
\end{align}
Using (\ref{ms7}) with $[\tilde{u},\tilde{v}]=[0,0],$ Lemmas \ref{LinB1B2(1+t)} and \ref{LinB1B3(1+t)}, we have
\begin{align}
&\Vert \Phi[u,v]\Vert_{\mathcal{L}^\infty_{\alpha,s}}\leq \sup_{0<t<\infty}(1+t)^{\alpha}\Vert B_1(t)u_0+B_2(t)v_0\Vert_{M^s_{p,q}}\nonumber\\
&\ \ \ +\sup_{0<t<\infty}(1+t)^{\alpha}\Vert B_3(t)u_0+B_1(t)v_0\Vert_{D^{-1}JM^s_{p,q}}+C_12^\lambda\epsilon^\lambda\nonumber\\
&\ \ \ \leq \tilde{C}(\Vert u_0\Vert{M^s_{p',q}}+ \Vert v_0\Vert{M^s_{p',q}})+2^\lambda\epsilon^\lambda C_1\leq 2\epsilon,\label{ms8}
\end{align}
provided $2^\lambda\epsilon^{\lambda-1}C_1<1$ and $[u,v]\in \mathcal{B}_{2\epsilon}.$ From (\ref{ms7}) and (\ref{ms8}) we get that $\Phi$ is a contraction, which implies that the integral equation (\ref{ms5}) has a unique solution $[u,v]\in\mathcal{L}^\infty_{\alpha,s}$, satisfying $\Vert [u,v]\Vert_{\mathcal{L}^\infty_{\alpha,s}}\leq{2\epsilon}.$ Finally, we prove that the data-solution map is locally Lipschitz. Let $[u,v],$ $[\tilde{u},\tilde{v}]$ global solutions of (\ref{ms5}) with initial data 
$[u_0,v_0]$ and $[\tilde{u}_0,\tilde{v}_0]\in M^s_{p',q},$ respectively, satisfying $\Vert[u,v]\Vert_{\mathcal{L}^\infty_{\alpha,s}}, \Vert[\tilde{u},\tilde{v}]\Vert_{\mathcal{L}^\infty_{\alpha,s}}\leq 2\epsilon.$ Then, taking the difference between the integral equations and the ${\mathcal{L}^\infty_{\alpha,s}}$-norm, we get
\begin{align}
\Vert [u-\tilde{u},v-\tilde{v}]\Vert_{\mathcal{L}^\infty_{\alpha,s}} &\leq \Vert B(t)[u_0-\tilde{u}_0,v_0-\tilde{v}_0]\Vert_{\mathcal{L}^\infty_{\alpha,s}}\nonumber\\
&+\Vert [B_2(t-\tau)(f(u)-f(\tilde{u})),B_1(t-\tau)(f(u)-f(\tilde{u}))]\Vert_{\mathcal{L}^\infty_{\alpha,s}}\nonumber\\
&\leq \tilde{C}\Vert [u_0-\tilde{u}_0,v_0-\tilde{v}_0]\Vert_{{M^s_{p',q}}\times M^s_{p',q}} +2^\lambda \epsilon^{\lambda-1}C_1\Vert [u-\tilde{u},v-\tilde{v}]\Vert_{\mathcal{L}^\infty_{\alpha,s}}.\label{f23}
\end{align}
Therefore, since $2^\lambda \epsilon^{\lambda-1}C_1<1,$ from (\ref{f23}) we conclude the result.
\endproof
\begin{remark}\label{rem4}
Assume that $[u_0,v_0]$ satisfies (\ref{est14})-(\ref{est15}). Then, from (\ref{ms5A}) with $[\tilde{u},\tilde{v}]=[0,0],$ we have
\begin{align}
& \sup_{-\infty <t<\infty}\vert t\vert ^{\beta}\Vert \Phi[u,v]\Vert_{M^s_{p,q}}\leq \sup_{-\infty <t<\infty}\vert t\vert ^{\beta}\Vert B_1(t)u_0+B_2(t)v_0\Vert_{M^s_{p,q}}\nonumber\\
&\ \ \ +\sup_{0<t<\infty}t^{\beta}\Vert B_3(t)u_0+B_1(t)v_0\Vert_{D^{-1}JM^s_{p,q}}+2^\lambda\epsilon^\lambda C_2\nonumber\\
&\ \ \ \leq \epsilon+2^\lambda\epsilon^\lambda C_2\leq 2\epsilon,\label{ms9}
\end{align}
provided $2^\lambda\epsilon^{\lambda-1}C_2<1$ and $[u,v]\in \mathcal{B}_{2\epsilon}.$ From (\ref{ms5A}) and (\ref{ms9}) we get that $\Phi$ is a contraction, which implies that the integral equation (\ref{ms5}) has a unique solution $[u,v]\in {\mathcal{H}^\infty_{\beta,s}}.$
\end{remark}

\subsection{Proof of Theorem \ref{global4}}
We start by considering the following function space
\[X= l_{\square}^1(L^{\infty}(\mathbb{R};L^2))\cap  l_{\square}^1(L^p(\mathbb{R};L^p)).\]
Since  $\|\square_kB_1(t)u_0\|_{L^2}=\|\square_ku_0\|_{L^2},$ $\|\square_kB_2(t)v_0\|_{L^2}=\|\square_kv_0\|_{L^2},$ using Proposition \ref{Strichartz1} and taking $p$ a positive integer such that $p\leq \lambda+1,$ we have
\begin{align*}
\Vert \Phi_1(u,v)\Vert_{l_{\square}^1(L^{\infty}(\mathbb{R};L^2))}&\apprle \Vert B_1(t)u_0\Vert_{l_{\square}^1(L^{\infty}(\mathbb{R};L^2))}+\Vert B_2(t)v_0\Vert_{l_{\square}^1(L^{\infty}(\mathbb{R};L^2))}\\
&+  \left\Vert \int_0^tB_2(t-\tau)f(u(\tau))d\tau\right\Vert_{l_{\square}^1(L^{\infty}(\mathbb{R};L^2))}\\
&\apprle \Vert u_0\Vert_{M_{2,1}}+\Vert v_0\Vert_{M_{2,1}}+\Vert f(u)\Vert_{l_{\square}^1(L^{\gamma'}(\mathbb{R};L^{p'}))}\\
&\apprle \Vert u_0\Vert_{M_{2,1}}+\Vert v_0\Vert_{M_{2,1}}+\Vert u\Vert^{p-1}_{l_{\square}^1(L^p(\mathbb{R};L^p))}\Vert u\Vert^{\lambda +1-p}_{l_{\square}^1(L^{\infty}(\mathbb{R};L^{\infty}))}.
\end{align*}
In the last inequality we use Lemma \ref{BaoHud} with $\alpha=0,$ $q=1,$ $p_i=\gamma_i=p,$ for $i=1,2,...,p-1$ and $p_i=\gamma_i=\infty,$ for $i=p,p+1,...,\lambda+1.$
Next, using the fact that $\Vert \square_k u\Vert_{\infty} \apprle \Vert \square_k u\Vert_2,$ for all $k\in \mathbb{Z}^n,$ we obtain
\begin{align*}
\Vert \Phi_1(u,v)\Vert_{l_{\square}^1(L^{\infty}(\mathbb{R};L^2))}&\apprle \Vert u_0\Vert_{M_{2,1}}+\Vert v_0\Vert_{M_{2,1}}+\Vert u\Vert^{p-1}_{l_{\square}^1(L^p(\mathbb{R};L^p))}\Vert u\Vert^{\lambda +1-p}_{l_{\square}^1(L^{\infty}(\mathbb{R};L^2))}.
\end{align*}
Therefore,
\begin{align}\label{des1}
\Vert \Phi_1(u,v)\Vert_{l_{\square}^1(L^{\infty}(\mathbb{R};L^2))}&\apprle \Vert u_0\Vert_{M_{2,1}}+\Vert v_0\Vert_{M_{2,1}}+\Vert u\Vert^{\lambda}_{X}.
\end{align}
Now, we apply Proposition  \ref{Strichartz1} with $\sigma=\gamma =p$ and $q=1.$ Since $p\geq 2,$ the condition $p\geq \gamma_p,$  implies that $p\geq \max\{2,\gamma_p\}.$  For $p\neq 2,$ we obtain that
\[p\geq \gamma_p \iff \frac{2}{p} \leq \frac{2}{\gamma_p}= n\left(\frac 12-\frac 1p\right)=\frac{n(p-2)}{2p}\iff 2+\frac{4}{n}\leq p. \]
Then, from Proposition \ref{Strichartz1} and Lemma \ref{BaoHud} we obtain
\begin{align}\label{des2}
\Vert \Phi_1(u,v)\Vert_{l_{\square}^1(L^{p}(\mathbb{R};L^p))}&\apprle \Vert u_0\Vert_{M_{2,1}}+\Vert v_0\Vert_{M_{2,1}}+\Vert u\Vert^{\lambda}_{X}.
\end{align}
In order to deal with the variable $v,$ we consider the space
\[Y= l_{\square}^1(L^{\infty}(\mathbb{R};D^{-1}JL^2)).\]
Using that $\|\square_kB_1(t)u_0\|_{D^{-1}JL^2}=\|\square_ku_0\|_{L^2},$ $\|\square_kB_3(t)v_0\|_{D^{-1}JL^2}=\|\square_kv_0\|_{L^2},$ Proposition \ref{Strichartz2} and Lemma \ref{BaoHud}, we obtain
\begin{align}\label{des3}
\Vert \Phi_2(u,v)\Vert_{l_{\square}^1(L^{\infty}(\mathbb{R};D^{-1}JL^2))}&\apprle \Vert B_3(t)g\Vert_{l_{\square}^1(L^{\infty}(\mathbb{R};D^{-1}JL^2))}+\Vert B_1(t)g\Vert_{l_{\square}^1(L^{\infty}(\mathbb{R};D^{-1}JL^2))}\nonumber\\
&+  \left\Vert \int_0^tB_1(t-\tau)f(u(\tau))d\tau\right\Vert_{l_{\square}^1(L^{\infty}(\mathbb{R};D^{-1}JL^2))}\nonumber\\
&\apprle \Vert u_0\Vert_{M_{2,1}}+\Vert v_0\Vert_{M_{2,1}}+\Vert f(u)\Vert_{l_{\square}^1(L^{\gamma'}(\mathbb{R};L^{p'}))}\nonumber\\
&\apprle \Vert u_0\Vert_{M_{2,1}}+\Vert v_0\Vert_{M_{2,1}}+\Vert u\Vert^{\lambda}_{X}.
\end{align}
We prove that the mapping $\Phi[u,v]=[\Phi_1(u,v), \Phi_2(u,v) ]$ defines a contraction in the metric space $\mathcal{B}_{2\epsilon}=\{[u,v]: \Vert u\Vert_X+ \Vert v\Vert_Y\leq 2\epsilon\},$ for some $\epsilon>0.$ Following (\ref{des1}), (\ref{des2}) and (\ref{des3}) we get
\begin{align*}
&\Vert \Phi[u,v]-\Phi[\tilde{u},\tilde{v}]\Vert_{X\times Y}= \Vert \Phi_1[u,v]-\Phi_1[\tilde{u},\tilde{v}]\Vert_{X}+\Vert \Phi_2[u,v]-\Phi_2[\tilde{u},\tilde{v}]\Vert_{Y}\\
&\ \ \ \  = \left\Vert \int_0^tB_2(t-\tau)(f(u)-f(\tilde{u}))d\tau\right\Vert_{l_{\square}^1(L^{\infty}(\mathbb{R};L^2))}+\left\Vert \int_0^tB_2(t-\tau)(f(u)-f(\tilde{u}))d\tau\right\Vert_{l_{\square}^1(L^{p}(\mathbb{R};L^p))}\\
&\ \ \ \  +\left\Vert \int_0^tB_1(t-\tau)(f(u)-f(\tilde{u}))d\tau\right\Vert_{Y}\\
&\ \ \ \ \leq C \Vert u-\tilde{u}\Vert_X(\Vert u\Vert^{\lambda-1}_X+\Vert \tilde{u}\Vert^{\lambda-1}_X)\\
&\ \ \ \ \leq C2^\lambda\epsilon^{\lambda-1} \Vert [u,v]-[\tilde{u},\tilde{v}]\Vert_{X\times Y}.
\end{align*}
Also,
\begin{align*}
&\Vert \Phi[u,v]\Vert_{X\times Y}\leq \Vert u_0\Vert_{M_{2,1}}+\Vert u\Vert^{\lambda}_X\leq \Vert u_0\Vert_{M_{2,1}}+C2^\lambda\epsilon^\lambda\leq 2\epsilon,
\end{align*}
provided $2^\lambda\epsilon^{\lambda-1}C<1.$ Thus, $\Phi$ defines a contraction, which implies that the integral equation (\ref{ms5}) has a solution $[u,v]\in X\times Y.$ Notice that $l_{\square}^1(L^{\infty}(\mathbb{R};L^2))\subset L^\infty(\mathbb{R};M_{2,1})$ and $Y\subset L^\infty(\mathbb{R};D^{-1}JM_{2,1}).$ Thus, it remains
to prove the time-continuity of the solution. For that, let $t_0\in \mathbb{R},$ we show that
\begin{equation*}
\lim_{t\rightarrow t_0}\Vert [u(t),v(t)]-[u(t_0),v(t_0)]\Vert_{M_{2,1}\times D^{-1}JM_{2,1}}=0.
\end{equation*} 
From the integral equation (\ref{ms5}) we get
\begin{equation}\label{int_u}
u(t_0)=B_1(t_0)u_0+B_2(t_0)v_0-\int_0^{t_0}B_2(t_0-\tau)f(u(\tau))d\tau.
\end{equation}
Then, taking the $M_{2,1}$-norm of the difference between the first equation of (\ref{ms5}) and (\ref{int_u}) we obtain
\begin{equation*}
\Vert u(t)-u(t_0)\Vert_{M_{2,1}}\leq \Vert [B_1(t)-B_1(t_0)]u_0\Vert_{M_{2,1}}+\Vert [B_2(t)-B_2(t_0)]v_0\Vert_{M_{2,1}}+ \Vert I(t,t_0)\Vert_{M_{2,1}},
\end{equation*}
 where $I(t,t_0)$ is given by
 \[I(t,t_0)=\int_0^tB_2(t-\tau)f(u(\tau))d\tau-\int_0^{t_0}B_2(t_0-\tau)f(u(\tau))d\tau.\]
From Plancherel's theorem we obtain,
\[\Vert [B_1(t)-B_1(t_0)]u_0\Vert_{M_{2,1}}=\sum_{k\in \mathbb{Z}^n}\Vert [B_1(t)-B_1(t_0)]\square_k u_0\Vert_{L^2}=\sum_{k\in \mathbb{Z}^n}\Vert [\cos(t|\xi|\langle\xi\rangle)-\cos(t_0|\xi|\langle\xi\rangle)]\widehat{\square_k u_0}\Vert_{L^2}.\]
Since $u_0\in M_{2,1}$ and $t\mapsto \cos(t|\xi|\langle\xi\rangle)$ is a continuous function, we have that
\begin{equation*}
\Vert [B_1(t)-B_1(t_0)]u_0\Vert_{M_{2,1}}\to  0, \ \ \ \ \text{as}\ \ \ \ t\to t_0.
\end{equation*}
In a very similar way, we arrive at
\begin{equation}\label{Cont3}
\Vert [B_2(t)-B_2(t_0)]v_0\Vert_{M_{2,1}}\to  0, \ \ \ \ \text{as}\ \ \ \ t\to t_0.
\end{equation}
Now, we deal with $\Vert I(t,t_0)\Vert_{M_{2,1}}.$ For that, notice that
\begin{align*}
\Vert I(t,t_0)\Vert_{M_{2,1}}&=\left\Vert \left( B_2(t)-B_2(t_0)\right)\int_0^{t_0}B_2(-\tau)f(u(\tau))d\tau+\int_{t_0}^tB_2(t-\tau)f(u(\tau))d\tau\right\Vert_{M_{2,1}}\\
&\leq \left\Vert \left( B_2(t)-B_2(t_0)\right)\int_0^{t_0}B_2(-\tau)f(u(\tau))d\tau\right\Vert_{M_{2,1}}+\left\Vert \int_{t_0}^tB_2(t-\tau)f(u(\tau))d\tau\right\Vert_{M_{2,1}}\\
&=I_1+I_2.
\end{align*}
Taking into account that $u\in X, $ we get
\begin{align*}
\left\Vert\int_0^{t_0}B_2(-\tau)f(u(\tau))d\tau\right\Vert_{M_{2,1}}&\leq \left\Vert \int_0^{t_0}B_2(-\tau)f(u(\tau))d\tau\right\Vert_{l^1_{\square}(L^{\infty}(\mathbb{R};L^2))}\\
&\leq C \left\Vert f(u(\tau))\right\Vert_{l^1_{\square}(L^{\gamma'}(\mathbb{R};L^{p'}))}<\infty.
\end{align*}
Therefore, analogously to (\ref{Cont3}) we obtain  $\lim\limits_{t\rightarrow t_0}I_1=0.$\\

Now, using Proposition \ref{Strichartz1} we arrive at
\begin{align*}
I_2 &\leq \int_{t_0}^{t} \Vert B_2(t-\tau)f(u(\tau))\Vert_{M_{2,1}}d\tau\leq  \int_{t_0}^{t} \Vert B_2(t-\tau)f(u(\tau))\Vert_{l^1_{\square}(L^{\infty}(\mathbb{R};L^2))}d\tau \leq \int_{t_0}^{t} \Vert f(u(\tau))\Vert_{M_{2,1}}d\tau\\
&\leq \int_{t_0}^{t} \Vert u(\tau)\Vert^{\lambda}_{M_{2,1}}d\tau\leq \int_{t_0}^{t} \Vert u(\tau)\Vert^{\lambda}_{l^1_{\square}(L^{\infty}(\mathbb{R};L^2))}d\tau\leq C\vert t-t_0\vert\epsilon^\lambda\rightarrow 0,\ \mbox{as}\ t\rightarrow t_0.
\end{align*}
This proves that  $\lim\limits_{t\rightarrow t_0}\Vert u(t)-u(t_0)\Vert_{M_{2,1}}=0.$ In a similar way  we obtain that 
\begin{align*}
\lim_{t\rightarrow t_0}\Vert v(t)-v(t_0)\Vert_{D^{-1}JM_{2,1}}=0.
\end{align*}
The remain of the proof is standard and for this reason we omit the details.  
\endproof
\subsection {Proof of Theorem \ref{local1}}
We consider the metric space $\mathcal{B}_{2\epsilon}=\{[u,v]\in \mathcal{L}^T_{\alpha,s}:\ \Vert [u,v]\Vert_{\mathcal{L}^T_{\alpha,s}}\leq 2\epsilon\}.$ We prove that the mapping $\Phi[u,v]=[\Phi_1(u,v), \Phi_2(u,v) ]$ defines a contraction in the metric space $\mathcal{B}_{2\epsilon},$ for some $\epsilon>0.$ Let us consider $\epsilon=\tilde{C}(\Vert u_0\Vert{M^s_{p',q}}+ \Vert v_0\Vert{M^s_{p',q}}),$ where $\tilde{C}$ is the maximum of the constants of Lemmas \ref{LinB1B2(1+t)} and \ref{LinB1B3(1+t)}. Take $T>0$ such that $C_3\epsilon^{\lambda-1}2^\lambda T<1,$ where $C_3$ is the constant provided by Proposition \ref{non_estLoc}.
From Lemmas \ref{LinB1B2(1+t)} and \ref{LinB1B3(1+t)}  and Proposition \ref{non_estLoc} we have
\begin{align}
\Vert \Phi[u,v]\Vert_{\mathcal{L}^T_{\alpha, s}}&\leq \sup_{-T<t<T}\vert t\vert^{\alpha}\Vert B_1(t)u_0+B_2(t)v_0\Vert_{M^s_{p,q}}\nonumber\\
&+\sup_{-T<t<T}\vert t\vert^{\alpha}\Vert B_3(t)u_0+B_1(t)v_0\Vert_{D^{-1}JM^s_{p,q}}+C_3T\Vert [u,v]\Vert_{\mathcal{L}^T_{\alpha,s}}^\lambda\nonumber\\
&\leq \tilde{C}(\Vert u_0\Vert{M^s_{p',q}}+ \Vert v_0\Vert{M^s_{p',q}})+C_3T(2\epsilon)^\lambda\nonumber\\
&\leq \epsilon+C_3T(2\epsilon)^\lambda<2\epsilon,\label{ms12}
\end{align}
for all $[u,v]\in \mathcal{B}_{2\epsilon}$ and therefore, $\Phi(\mathcal{B}_{2\epsilon})\subseteq \mathcal{B}_{2\epsilon}.$ On the other hand, from Proposition \ref{non_est}, and applying the Young inequality in each term $\Vert u(t)\Vert^{\lambda-k}_{M^s_{p,q}}\Vert \tilde{u}(t)\Vert^{k-1}_{M^s_{p,q}},$ $k=1,...,\lambda,$ we have
 we have
\begin{align}
\Vert \Phi[u,v]-\Phi[\tilde{u},\tilde{v}]\Vert_{\mathcal{L}^T_{\alpha,s}}&\leq C_3T\Vert [u,v]-[\tilde{u},\tilde{v}]\Vert_{\mathcal{L}^T_{\alpha,s}}(\Vert [u,v]\Vert^{\lambda-1}_{\mathcal{L}^T_{\alpha,s}}+\Vert [\tilde{u},\tilde{v}]\Vert^{\lambda-1}_{\mathcal{L}^T_{\alpha,s}})\nonumber\\
&\leq C_3T2^\lambda\epsilon^{\lambda-1}\Vert [u,v]-[\tilde{u},\tilde{v}]\Vert_{\mathcal{L}^T_{\alpha,s}},\label{ms13}
\end{align}
for all $[u,v], [\tilde{u},\tilde{v}]\in \mathcal{B}_{2\epsilon}.$ Since $C_32^{\lambda}\epsilon^{\lambda-1}T<1,$ from (\ref{ms12}) and (\ref{ms13}) we get that $\Phi$ is a contraction, which implies that the integral equation (\ref{ms5}) has a unique solution $[u,v]\in\mathcal{L}^T_{\alpha,s}$, satisfying $\Vert [u,v]\Vert_{\mathcal{L}^T_{\alpha,s}}\leq{2\epsilon}.$ The proof that the data-solution map is locally Lipschitz follows as in the proof of this property in Theorem \ref{global1}. 
\subsection{Proof of Theorem \ref{teoscat}}
We only prove (\ref{aux-scat1}) in the case $t\rightarrow \infty.$ The case $t\rightarrow -\infty$ follows analogously. We define
\begin{equation*}
[u_0^+,v_0^+]=[u_0,v_0]-\int_0^\infty B(-\tau)[0,f(u(\tau))]d\tau.
\end{equation*}
Let $[u^+,v^+]=B(t)[u_0^+,v_0^+]$ the solution of the linear problem associated to (\ref{EquiSyst}), that is,
\begin{equation*}
\left\{
\begin{array}{lc}
\partial_{t}u^+=\Delta v^+, & \\
\partial_{t}v^+=u^+-\Delta u^+, &  \\
u^+(x,0)=u^+_{0}(x),  &  \\
v^+(x,0)=v^+_0(x).&
\end{array}
\right.
\end{equation*}
The pair $[u^+,v^+]$ can be expressed as
\begin{equation}\label{ms5bi2}
[u^+,v^+]=B(t)[u_0,v_0]-\int_0^\infty B(t-\tau)[0,f(u(\tau))]d\tau.
\end{equation}
Taking the difference between (\ref{ms5}) and (\ref{ms5bi2}) and computing the $M^s_{p,q}\times D^{-1}JM^s_{p,q}$-norm we get
\begin{align*}
&\Vert\lbrack u(t)-u^{+}(t),v(t)-v^{+}(t)]\Vert_{M^s_{p,q}\times D^{-1}JM^s_{p,q}}    =\left \Vert \int_t^\infty B(t-\tau)[0,f(u(\tau))]d\tau\right\Vert_{M^s_{p,q}\times D^{-1}JM^s_{p,q}}\\
&\ \ \ \ \apprle \int_t^\infty (1+\vert t-\tau\vert)^{-\alpha}\Vert u(\tau)\Vert_{M^s_{p,q}}d\tau\\
&\ \ \ \ \apprle \left( \sup_{0<t<\infty}(1+t)^\alpha\Vert u(t)\Vert_{M^s_{p,q}}\right)^{\lambda}\int_t^\infty(1+\vert t-\tau\vert)^{-\alpha}(1+\tau)^{-\alpha\lambda}d\tau\\
&\ \ \ \ \apprle\int_t^\infty(1+\tau)^{-\alpha\lambda}d\tau\\
&\ \ \ \ \leq Ct^{1-\alpha\lambda}.
\end{align*}

\subsection{Proof of Theorem \ref{teoasymp}}
Let $[u,v], [\tilde{u},\tilde{v}]$ be two solutions of
(\ref{GoodBous}) with data $[u_{0},v_{0}],$ $[\tilde{u}_{0},\tilde{v}_{0}]\in M^{s}_{p',q}\times M^{s}_{p',q},$ respectively. We assume only the case $t>0;$ the case $t<0$ can be addressed analogously.  Taking the difference between the integral equations (\ref{ms5}) and computing the $M^s_{p,q}\times D^{-1}JM^s_{p,q}$-norm we get
\begin{align}
&\Vert\lbrack u(t)-\tilde{u}(t),v(t)-\tilde{v}(t)]\Vert_{M^s_{p,q}\times D^{-1}JM^s_{p,q}} \leq \left \Vert B(t)[u_0-\tilde{u}_0,v_0-\tilde{v}_0]\right\Vert_{M^s_{p,q}\times D^{-1}JM^s_{p,q}}\nonumber\\
&\ \ \ +\left \Vert \int_0^t B(t-\tau)[0,f(u)-f(\tilde{u})]d\tau\right\Vert_{M^s_{p,q}\times D^{-1}JM^s_{p,q}}:=J_1+J_2.\label{asy89}
\end{align}
Working as in Proposition \ref{non_est}, and using that $[u,v], [\tilde{u},\tilde{v}]\in B_{2\epsilon}\subset {\mathcal{L}^\infty_{\alpha,s}},$ we bound $J_2$ as follows
\begin{align}
J_2 &\apprle\int_0^t(1+ \vert t-\tau\vert)^{-\alpha}\Vert u-\tilde{u}\Vert_{M^s_{p,q}}\sum_{k=1}^{\lambda}\Vert u\Vert^{\lambda-k}_{M^s_{p,q}}\Vert \tilde{u}\Vert^{k-1}_{M^s_{p,q}}d\tau\nonumber\\
&\apprle 2^\lambda\epsilon^{\lambda-1}\int_0^t(1+ \vert t-\tau\vert)^{-\alpha}(1+ \tau)^{-\alpha\lambda+1}\Vert\lbrack u(t)-\tilde{u}(t),v(t)-\tilde{v}(t)]\Vert_{M^s_{p,q}\times D^{-1}JM^s_{p,q}} d\tau.\label{2asymp2}
\end{align}
Let us denote by 
\begin{align*}
H:=\limsup_{t\rightarrow \infty}(1+t)^{\alpha}\Vert [u(t)-\tilde{u}(t),v(t)-\tilde{v}(t)]\Vert_{M^s_{p,q}\times D^{-1}JM^s_{p,q}}.
\end{align*}
From Theorem \ref{global1}, $H<\infty.$ Moreover, from (\ref{asy89}) and (\ref{2asymp2}) we obtain
\begin{align}
H&\apprle (1+t)^{\alpha}J_1+\left( 2^\lambda \epsilon^{\lambda-1}(1+t)^{\alpha}\int_0^t(1+ \vert t-\tau\vert)^{-\alpha}(1+ \tau)^{-\alpha\lambda}d\tau\right)H\nonumber\\
 &\apprle (1+t)^{\alpha}J_1+\left( 2^\lambda \epsilon^{\lambda-1}\right)H.\label{ass99}
\end{align}
Thus, if $\lim_{t\rightarrow \infty}\Vert B(t)[u_0-\tilde{u}_0,v_0-\tilde{v}_0]\Vert_{M^s_{p,q}\times D^{-1}JM^s_{p,q}}=0,$ and remembering that $2^\lambda \epsilon^{\lambda-1}<1,$ from (\ref{ass99}) we get that $H=0,$ and then
$$\lim_{t\rightarrow \infty}(1+t)^{\alpha}\Vert [u(t)-\tilde{u}(t),v(t)-\tilde{v}(t)]\Vert_{M^s_{p,q}\times D^{-1}JM^s_{p,q}}=0.$$
Now, we will prove the converse proposition. First, notice that from Theorem \ref{global1}
$$\left(\Vert [u,v]\Vert_{\mathcal{L}^\infty_{\alpha,s}}\right)^{\lambda-1}+\left(\Vert [\tilde{u},\tilde{v}]\Vert_{\mathcal{L}^\infty_{\alpha,s}}\right)^{\lambda-1}=K<\infty.$$
Then we get,
\begin{align*}
&\limsup_{t\rightarrow \infty}(1+t)^\alpha\Vert B(t)[u_0-\tilde{u}_0,v_0-\tilde{v}_0\Vert_{M^s_{p,q}\times D^{-1}JM^s_{p,q}}\\
&\ \ \ \leq \limsup_{t\rightarrow \infty}(1+t)^\alpha\Vert [u(t)-\tilde{u}(t),v(t)-\tilde{v}(t)\Vert_{M^s_{p,q}\times D^{-1}JM^s_{p,q}}\\
&\ \ \ \ + \limsup_{t\rightarrow \infty}(1+t)^\alpha\left \Vert \int_0^t B(t-\tau)[0,f(u)-f(\tilde{u})]d\tau\right\Vert_{M^s_{p,q}\times D^{-1}JM^s_{p,q}}\\
&\ \ \ \ \leq 0+CKH=0.
\end{align*}
Thus we conclude the proof of Theorem \ref{teoasymp}.
\endproof

\end{document}